\documentclass[11pt]{amsart}
\usepackage{amssymb}
\usepackage{amsfonts}
\usepackage{amscd}
\usepackage[T1]{fontenc}
\usepackage{color}
\usepackage{tikz}
\usepackage{hyperref}

\usetikzlibrary{matrix,arrows}




\long\def\private#1{}

\newbox\removebox
\newcommand\remove[1]{%
\setbox\removebox=\ifmmode\hbox{$#1$}\else\hbox{#1}\fi%
\leavevmode
\rlap{\textcolor{blue}{\vrule height0.8ex depth-0.6ex width\wd\removebox}}%
\box\removebox
}
\long\def\bigremove#1{%
\par\setbox\removebox=\vbox{#1}%
\vbox{%
\vbox to0pt{\hbox{\tikz\draw[color=blue,thick] (0,0) -- (\wd\removebox,-\ht\removebox)  (\wd\removebox,0) -- (0,-\ht\removebox);}}
\box\removebox
}
}

\usepackage{mathrsfs} 

\newcommand{\dcup}{\mathrel{\dot{\cup}}}

\newcommand{\inject}{\hookrightarrow}

\newcommand{\acl}{\operatorname{acl}}
\newcommand{\cl}{\operatorname{cl}}
\newcommand{\dcl}{\operatorname{dcl}}
\newcommand{\dimcl}{\dim^{\cl}}
\newcommand{\mdim}{\operatorname{mdim}_*}

\newcommand{\rk}{\operatorname{rk}}

\newcommand{\tp}{\operatorname{tp}}
\newcommand{\Vol}{\operatorname{Vol}}

\newcommand{\pow}{\mathcal{P}}

\newcommand{\Sym}{\hat{\mathcal{S}}}

\newcommand{\Ksb}[1][Z]{K^+_{b}(#1)}  
\newcommand{\Ks}[1][Z]{K^+(#1)}

\newcommand{\NNi}{\NN_{\infty}}

\newcommand{\Ni}{N_{\infty}}

\newcommand{\cNi}{\cN_{\infty}}

\newcommand{\refi}[2]{\ref{#1}~(\ref{#1.#2})}

\def\NN{{\mathbb N}}

\def\QQ{{\mathbb Q}}
\def\RR{{\mathbb R}}

\def\ZZ{{\mathbb Z}}

\def\cN{{\mathcal N}}

\def\cQ{{\mathcal Q}}

\def\cZ{{\mathcal Z}}

\newcommand{\tup}[1]{{\bar{#1}}}  
\newcommand{\atup}{\tup{a}}
\newcommand{\btup}{\tup{b}}
\newcommand{\ctup}{\tup{c}}
\newcommand{\ttup}{\tup{t}}
\newcommand{\xtup}{\tup{x}}
\newcommand{\ytup}{\tup{y}}

\newcommand{\csg}{\mathcal{C}\!\mathit{onv}(Z)}  
\newcommand{\dimspace}{\NN^{\csg}}

\newtheorem{thm}[subsubsection]{Theorem}
\newtheorem{lem}[subsubsection]{Lemma}
\newtheorem{lem-defn}[subsubsection]{Lemma-Definition}
\newtheorem{cor}[subsubsection]{Corollary}
\newtheorem{prop}[subsubsection]{Proposition}

\theoremstyle{definition}
\newtheorem{defn}[subsubsection]{Definition}
\newtheorem{exa}[subsubsection]{Example}
\newtheorem{rem}[subsubsection]{Remark}
\newtheorem{notn}[subsubsection]{Notation}

\theoremstyle{remark}

\theoremstyle{plain}

\numberwithin{equation}{subsection}

  {\par\medskip\noindent #1\par\begingroup%
    \advance\leftskip by 1em\advance\rightskip by 1em}%
  {\par\endgroup}

\newcommand{\1}{^{-1}}

\begin{document}

\setcounter{tocdepth}{1} 

\title[Definable sets up to definable bijections in $\ZZ$-groups]{Definable sets up to definable bijections in Presburger groups}

\author[R.~Cluckers]{Raf Cluckers}
\address{Universit\'e de Lille, Laboratoire Painlev\'e, CNRS - UMR 8524, Cit\'e Scientifique, 59655
Villeneuve d'Ascq Cedex, France, and,
KU Leuven, Department of Mathematics,
Celestijnenlaan 200B, B-3001 Leu\-ven, Bel\-gium}
\email{Raf.Cluckers@math.univ-lille1.fr}
\urladdr{http://rcluckers.perso.math.cnrs.fr/}

\author[I.~Halupczok]{Immanuel Halupczok}
\address{Mathematisches Institut, HHU D\"usseldorf,
Universit\"atsstr. 1, 40225 D\"usseldorf,
Germany}
\email{math@karimmi.de}
\urladdr{http://www.immi.karimmi.de/en/}

\subjclass[2010]{Primary 03C10; Secondary 06F20, 13D15, 16Y60}

\keywords{}

\begin{abstract}
We entirely classify definable sets up to definable bijections in $\ZZ$-groups, where the language is the one of ordered abelian groups.
From this, we deduce, among others, a classification of definable families of bounded definable sets.
\end{abstract}

\maketitle


\section{Introduction}

In \cite{Clu.cell}, the first author classified definable sets in the ordered abelian group $\ZZ$ up to definable bijection:
Finite sets are classified by their cardinality and infinite ones are classified by their dimension.
The present article has two main goals:
\begin{enumerate}
\item generalise this classification to $\ZZ$-groups (i.e., elementary extensions of $\ZZ$ in the language of ordered abelian groups; see Section~\ref{sec:Pres});
\item classify definable families of bounded sets
up to definable families of bijections (in $\ZZ$ and in elementary extensions).
\end{enumerate}

This may open the way to related questions on $p$-adic and motivic integrals, like a criterion for definable sets $X\subseteq \ZZ_p^{n}$ and $Y\subseteq \ZZ_p^m$ to have the same $p$-adic volume, refining results from \cite{Clu.DefQp} and \cite{HK.motInt}; see below for more details.

The classification of definable sets in a $\ZZ$-group $Z$ (Theorem~\ref{thm.groth}) is stated in the form of
an isomorphism $\Phi\colon \Sym\Ni \to \Ks$, where $\Ks$ is the Grothendieck semiring of definable sets in $Z$ (with parameters from $Z$) and where $\Sym\Ni$
is a semiring specified in terms of generators and relations (Definition~\ref{defn.semiring}). Roughly, the generators of $\Sym \Ni$ are the preimages under $\Phi$
of the intervals $[0, a) \subseteq Z$, where $a$ is either a positive element of $Z$ or $\infty$,
and the relations are:
\begin{itemize}
\item those which one obtains by gluing two intervals together;
\item the one coming from the definable bijection $[0, \infty) \to [0, \infty) \dcup [0, \infty)$;
\item those coming from bijections between finite sets.
\end{itemize}

The isomorphism $\Phi$ reduces the problem of determining whether two definable sets are in definable bijection to the problem of
checking whether two elements of $\Sym\Ni$ are equal, a problem which for general semirings is still highly non-trivial. However, in $\Sym\Ni$,
this is not too difficult: Proposition~\ref{prop.eq.test} explains how to check equality in $\Sym\Ni$.

From this classification, one then easily obtains some general results about definable sets: a Cantor--Schr\"oder--Bernstein Theorem (Corollary~\ref{cor.inin})
and several cancellation results (Corollaries~\ref{cor.cancel} and \ref{cor.cancel.mult}). We are not aware of more direct proofs of those results.

To prove injectivity of the map $\Phi$, we introduce invariants of definable sets which might be of independent interest.
As mentioned at the beginning, in $\ZZ$, a full set of invariants is given by cardinality (for finite sets) and dimension (as defined in \cite{Clu.cell}). In an elementary extension $Z \succ \ZZ$, we work with generalised versions of these two kinds of invariants: a notion of ``hyper-cardinality'', which entirely classifies
bounded definable sets and which behaves a lot like cardinality (see Definition~\ref{defn.count}, Proposition~\ref{prop.groth-b}), and a whole range of notions of dimensions, which differ in how long an interval $[0, a)$ has to be for it to
be considered as having dimension $1$ (Definitions~\ref{defn.dim.tp}, \ref{defn.dim}, \ref{defn.mdim}).

\medskip

Let us now consider Goal (2), and let us for simplicity first work in $\ZZ$.
It is known that for any definable family $(X_{\ytup})_{\ytup \in \ZZ^k}$ of finite sets,
the cardinality function $\ytup \mapsto \#X_{\ytup}$ is piecewise polynomial, i.e., there exists
a partition of the parameter space $\ZZ^k$ into finitely many definable pieces on each of which $\ytup \mapsto \#X_{\ytup}$
is a polynomial (see Section \ref{sec:Pres}). Given two families $(X_{\ytup})_{\ytup \in \ZZ^k}$ and $(X'_{\ytup})_{\ytup \in \ZZ^k}$,
the existence of a family of definable bijections $X_{\ytup} \to X'_{\ytup}$ clearly implies
that the cardinality functions are equal. Our main result about families (Theorem~\ref{thm.pres-fam}) states that this is an equivalence:
If $\#X_{\ytup} = \#X'_{\ytup}$ for every $\ytup$, then there exists a definable family of bijections.

This result can also be viewed as an ``automatic uniformity'' result for definable maps between finite sets:
The equality $\#X_{\ytup} = \#X'_{\ytup}$ just means that there exists a definable bijection $X_{\ytup} \to X'_{\ytup}$,
so Theorem~\ref{thm.pres-fam} states that the existence of individual definable bijections already implies the existence of
a uniformly definable family of bijections.
Note that the point here is that we only work in the structure $\ZZ$ itself. Otherwise (if we would assume
definable bijections $X_{\ytup} \to X_{\ytup'}$ to exist in elementary extensions of $\ZZ$), the
existence of a uniform family would follow directly by compactness.
Note also that this kind of automatic uniformity is false if one allows the sets $X_{\ytup}$, $X'_{\ytup}$ to be infinite (see Example~\ref{ex.unbd-fam}).

The same classification of definable families also works in elementary extensions $Z \succ \ZZ$, where instead
of requiring the $X_{\ytup}$ to be finite, we only require them to be bounded, and where we replace cardinality
by the above-mentioned hyper-cardinality: The function sending $\ytup$ to the hyper-cardinality of $X_{\ytup}$
is still piecewise polynomial in $\ytup$ (Proposition~\ref{prop.fam-poly}), and automatic uniformity
in families of bounded definable sets also holds in $Z$.

\medskip

In the context of motivic integration, \cite{HK.motInt}, Hrushovski and Kazhdan put the focus on describing the Grothendieck ring of definable sets in valued fields up to measure-preserving definable bijections. Pushing this focus further, they also
describe the Grothendieck ring of definable sets in the value group (up to certain definable bijections), similar to the results of this paper, but then for divisible ordered abelian groups \cite{HK.motHecke} instead of for $\ZZ$-groups. Understanding the Grothendieck ring of definable sets in $\ZZ$-groups may be a step in combining the focus of \cite{HK.motInt} with the one of \cite{CL.mot}, and to study definable sets in valued fields with $\ZZ$-group as value group, up to measure-preserving definable bijections.
In a way, our results are more precise than \cite{HK.motHecke} in the sense that we describe the Grothendieck semirings,
whereas most results in \cite{HK.motHecke} are about the Grothendieck rings (which moreover have been tensorized with $\QQ$).
As long as one restricts to bounded definable sets, this may not make a big difference: At least in $\ZZ$-groups $Z$,
the Grothendieck semiring $\Ksb$ of bounded definable sets maps injectively into the corresponding ring even when tensorized with $\QQ$
(this is Lemma~\ref{lem.inject}). However, the Grothendieck ring (as opposed to semiring) of all definable sets in a $\ZZ$-group $Z$ is trivial
because of the definable bijection between $Z$ and $Z \setminus \{0\}$. (In contrast, the corresponding ring in \cite{HK.motHecke}
is still non-trivial; see Theorem~3.12 of \emph{loc.\ cit.})

\medskip

Note that there are not yet many settings for which the Grothendieck semiring of $Z$-definable subsets in a structure $Z$ is explicitly known throughout all the models $Z$ of a theory; see \cite{KF.GrothOmin} for explicit Grothendieck rings in the context of o-minimal groups, and similarly, as mentioned just above, \cite{HK.motHecke}. It may be challenging to extend these studies to the semiring case. In view of \cite{Con.intPresb} it may also be interesting to study Grothendieck semi-rings for reducts of the Presburger language, throughout all models. In any o-minimal field (as opposed to group), a full classification of definable sets up to definable bijections is known, see \cite[Chapter 4, Remarks 2.14]{Dri.Omin}.

\medskip

The paper is organised as follows: In Section~\ref{sect.thm}, we fix our notation, define the ring $\Sym\Ni$ precisely and state the central result,
Theorem~\ref{thm.groth}. The next two sections are devoted to the proofs of surjectivity and injectivity of the map $\Phi\colon \Sym\Ni \to \Ks$, respectively. More precisely,
the main result of Section~\ref{sect.bij-to-cube} is Proposition~\ref{prop.bij-to-cube}, which states that every definable set is in definable bijection to a finite disjoint union of products of intervals.

In Section~\ref{sect.cor}, we collect some fruits of our work, in particular, the corollaries mentioned above and
the results about definable families.

\subsection{On Presburger groups}
\label{sec:Pres}
The study of $\ZZ$-groups (also called Presburger groups) was initiated by M.~Presburger \cite{Pre.comp}, who proved quantifier elimination in the so-called Presburger language, namely the language with symbols $+,-,0,1,<$ and for each integer $n>1$ a relation $\cdot \equiv \cdot \bmod n$ for congruence modulo $n$. The previously mentioned fact that cardinality functions $\ytup \mapsto \#X_{\ytup}$ for definable families $(X_{\ytup})_{\ytup \in \ZZ^k}$ of finite sets are piecewise polynomial follows from this quantifier elimination result (or, from the cell decomposition result from \cite{Clu.cell}), and is a special case of results in Section 4 of \cite{CL.mot}. It is hard to find a first historical reference of this fact, and it appears at least implicitly in work by J.-I.~Igusa and J.~Denef, but also in work by R.~P.~Stanley. We refer to \cite[Section~3.1]{Mar.modTh} and to Section \ref{sec:recall} for basic properties of $\ZZ$-groups.

\subsection{Acknowledgements}
R.C. was partially supported by the European Research Council under the European Community's Seventh Framework Programme (FP7/2007-2013) with ERC Grant Agreement nr. 615722
MOTMELSUM, and would like to thank the Labex CEMPI  (ANR-11-LABX-0007-01). I.H. was partially supported by the SFB~878 of the Deutsche Forschungsgemeinschaft.

\section{Precise statement of the results}
\label{sect.thm}

\subsection{Model theoretic terminology and conventions}
\label{sect.term}

In the entire article we use the language $L = (0, +, -, <)$ of ordered abelian groups.
Definable always means $L$-definable (with arbitrary parameters if not specified otherwise)
and all structures are $\ZZ$-groups, i.e., elementary extensions of $\ZZ$ with the natural $L$-structure.

Most of the time, $Z \succ \ZZ$ will denote the model we are interested in.
However, we will always work inside a monster model $\cZ \succ Z$: All definable sets are subsets of $\cZ^n$, and the fact that we are interested in $Z$ is reflected by imposing that our definable sets are $Z$-definable.
Our monster model does not need to be particularly monstruous: we only require $\cZ$ to be $\kappa^+$-saturated for some $\kappa \ge |Z|$.
Parameter sets will usually have to be ``small'', i.e., of cardinality less or equal to $\kappa$.

Tuples (which are denoted with a bar on top) are always of finite length.

By a type we always mean a complete type in this paper. The set of complete $n$-types over $Z$ is denoted by $S_n(Z)$. By a $\wedge$-definable or $\vee$-definable set, we
mean a set definable by an infinite conjunction or disjunction of formulas, respectively, with a finite number of free variables and where the total set of parameters has cardinality at most $\kappa$.

Given a (usually small) set $A \subseteq \cZ$, we write $\dcl(A)$ for its definable closure and $\acl(A)$ for its algebraic closure (in the sense of model theory).

Most of our results are formulated using the elementary substructure $Z \prec \cZ$ as a parameter set.
By the following lemma, allowing arbitrary parameter sets would not be more general.

\begin{lem}\label{lem.dcl}
For any subset $A \subseteq \cZ$, $\dcl(A)$ is an elementary substructure of $\cZ$.
\end{lem}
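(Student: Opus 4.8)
The plan is to verify the Tarski--Vaught test. First one checks that $\dcl(A)$ is an $L$-substructure of $\cZ$: it is nonempty and closed under $+$, $-$ and contains $0$, since these are $\emptyset$-definable. It then remains to show that for every finite tuple $\bar a$ in $\dcl(A)$, every nonempty $\bar a$-definable subset $S$ of $\cZ$ (in one free variable) meets $\dcl(A)$. Since each entry of $\bar a$ lies in $\dcl(A)$, transitivity of $\dcl$ gives $\dcl(\bar a)\subseteq\dcl(A)$, so it is enough to produce a point of $S$ lying in $\dcl(\bar a)$.

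The essential input is an internal form of well-ordering: for a fixed $L$-formula $\varphi(x,\bar y)$, the statement ``every nonempty set of the form $\{x:\varphi(x,\bar b)\}$ that is bounded above (resp.\ bounded below) has a greatest (resp.\ least) element'' is a single $L$-sentence; it holds in $\ZZ$ (translate the set so that it becomes a nonempty set of non-negative integers and invoke well-ordering of $\NN$), hence it holds in $\cZ$ because $\cZ\succ\ZZ$. Granting this, I would split into two cases for our nonempty $\bar a$-definable $S$. If $S$ is bounded below, let $m:=\min S$; it exists by the above, and it is the unique element of $\cZ$ that lies in $S$ and is a lower bound of $S$, a property expressed by an $L$-formula over $\bar a$, so $m\in\dcl(\bar a)\subseteq\dcl(A)$ and $m\in S$. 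If $S$ is not bounded below, then $S$ contains some element $<0$ (otherwise $0$ would be a lower bound), so $T:=S\cap\{x\in\cZ:x\le 0\}$ is nonempty; it is bounded above by $0$ and still $\bar a$-definable (as $0\in\dcl(\emptyset)$), so $m:=\max T$ exists, lies in $\dcl(\bar a)\subseteq\dcl(A)$ by the same uniqueness argument, and $m\in T\subseteq S$. In either case $S\cap\dcl(A)\neq\emptyset$, and Tarski--Vaught yields $\dcl(A)\preceq\cZ$.

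There is no serious obstacle here; the one point to watch is that the witness must be definable over $\bar a$ rather than over the whole of $S$. Intersecting $S$ with the half-line below an arbitrarily chosen element of $S$ would not do, since such an element need not belong to $\dcl(\bar a)$, whereas intersecting with the $\emptyset$-definable half-line $\{x\le 0\}$ does. One could alternatively argue via quantifier elimination for Presburger arithmetic (which is available here since the least positive element $1$ and the congruences modulo $n$ are $L$-definable), writing $S$ as a finite union of Presburger cells and extracting a $\dcl(\bar a)$-definable endpoint of a nonempty cell, using that $\cZ/n\cZ\cong\ZZ/n\ZZ$ to see that the relevant offsets are standard integers; but the direct argument above is cleaner and avoids quoting cell decomposition.
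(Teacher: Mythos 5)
Your proof is correct, but it follows a genuinely different route from the paper's. The paper's proof is a two-line citation of Marker's \emph{Model Theory} (Section 3.1): one checks that $\dcl(A)$ satisfies the axiomatization of Presburger arithmetic given there, and then invokes Presburger's quantifier-elimination theorem, which implies that any substructure which is a model of the theory is in fact an elementary substructure. Your argument instead verifies the Tarski--Vaught test directly, using the transferred first-order sentences expressing that nonempty bounded-above (or bounded-below) definable sets have a greatest (or least) element; this amounts to exhibiting definable Skolem functions via $\min$/$\max$ on definable half-lines. Your route avoids quoting both the explicit axiomatization and quantifier elimination, at the modest cost of the case split and the observation that the witness must be definable over $\bar a$ rather than merely over $S$ (which you handle correctly by intersecting with the $\emptyset$-definable half-line $\{x\le 0\}$). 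The paper's route is shorter in exposition but front-loads more machinery; yours is more self-contained and, as a bonus, makes the existence of definable Skolem functions (used later in Remark~\ref{rem.bij}) transparent from the same calculation.
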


\begin{proof}
This follows e.g.\ from the results in \cite[Section~3.1]{Mar.modTh}: From the axiomatization of Presburger Arithmetic
given there, one easily deduces that $\dcl(A)$ is a model of Presburger Arithmetic. Now $\dcl(A) \prec \cZ$ follows from
the quantifier elimination result given there (due to \cite{Pre.comp}).
\end{proof}

\private{I checked the ``follows from Marker'' precisely. --Immi}

We use the convention $0 \in \NN$, and we set $\NNi := \NN \cup \{\infty\}$.

Given $Z$ and/or $\cZ$ as above, we define, in analogy to $\NN \subset \ZZ \subset \QQ$:
\begin{itemize}
 \item $N := \{a \in Z \mid a \ge 0\}$, $\Ni: = N \cup \{\infty\}$, $Q := Z \otimes_\ZZ \QQ$;
 \item $\cN := \{a \in \cZ \mid a \ge 0\}$, $\cNi := \cN \cup \{\infty\}$, $\cQ := \cZ \otimes_\ZZ \QQ$.
\end{itemize}

For $a, b \in \cZ$ with $a \le b$, we write $[a, b)$ for the interval $\{x \in \cZ \mid a \le x < b\}$. (For $a = b$, this yields the empty set.)
Note that even if we take $a, b \in Z$, $[a, b)$ denotes the interval in $\cZ$.

\begin{defn}
Say that a definable subset of $\cZ^n$ is \emph{bounded} if it is a subset of $[-a, a)^n$ for some $a \in \cN$.
\end{defn}

\subsection{Some semigroups and semirings}

In this paper, all semigroups are commutative, written additively, and have a $0$ element,
and all semirings are commutative and have a $0$ and a $1$. Homomorphisms respect these $0$ and $1$. We need the following variant of symmetric algebras.

\begin{defn}\label{defn.sym}
Let $G$ be a semigroup with a semigroup homomorphism $s \colon (\NN,+) \to G$. Then we define the ``reduced symmetric algebra'' $\Sym G$ over $G$ as the free commutative semiring with generators
$[g]$ for $g \in G$, modulo the following relations.
\begin{enumerate}
\item $[g] + [g'] = [g + g']$ for $g, g' \in G$
\item $[0] = 0$ (where in the left hand $0$ is the neutral element of $G$ and the right hand $0$ is the $0$ in $\Sym G$);
\item $[s(1)] = 1$ (where the right hand $1$ is the $1$ in $\Sym G$).
\end{enumerate}
\end{defn}

\private{
An evil example: $G = \ZZ \cup \{\infty\}$, where $a + \infty = \infty$ for any $a$. An easy computation
shows that $\Sym (G)$ is trivial: We have $\infty + (-1)\cdot\infty = 1\cdot\infty + (-1)\cdot \infty = (1 + -1)\cdot \infty = 0\cdot \infty = 0$
and hence $1 = 1 + 0 = 1 + \infty + (-1)\cdot\infty = (1 + \infty) + (-1)\cdot\infty = \infty + (-1)\cdot\infty = 0$

 In particular:\\
-- $G \to \Sym (G)$ is not injective\\
-- The map $\Sym (\NN) \to \Sym (G)$ (induced by $\NN \to G$) is not injective.

}

For the moment, we will often assume the map $s\colon \NN \to G$ to be implicitly given. Later, we will have a canonical $s$ anyway.

The first two relations ensure that the map $G \to (\Sym G, +)$ is a semigroup homomorphism and the last one ensures that the induced map $\NN \to \Sym G$ is a semiring homomorphism. (That $[s(a)]\cdot [s(b)] = [s(a\cdot b)]$ for $a, b \in \NN$ follows from the first relation and that natural numbers can be written as sums of $1$.) Since $\Sym G$ is generated by the image of $G$ and there are no additional relations, $\Sym G$ (together with the map $G \to \Sym G$) is the initial object
in the category of pairs $(R, f)$ where $R$ is a semiring and $f\colon G \to (R,+)$ is a semigroup homomorphism with the property that $f \circ s$ is a semiring homomorphism. Note also that $\Sym$ is a functor: A semigroup homomorphism $f\colon G \to G'$ compatible with the maps $\NN \to G$ and $\NN \to G'$ induces a semiring homomorphism $\Sym G \to \Sym G'$.

Definition~\ref{defn.sym} also works well when applied to groups:
\begin{lem}\label{lem.ZQ}
Suppose that $G$ is a group and $s\colon (\NN, +) \to G$ is a semigroup homomorphism.
Then $\Sym G$ is a ring, $s$ extends to a group homomorphism $\ZZ \to G$
and the composition $\ZZ \to G \to \Sym G$ is a ring homomorphism.
Moreover, $\Sym(G \otimes_{\ZZ}\QQ)$ is canonically isomorphic to $(\Sym G) \otimes_{\ZZ} \QQ$ and
the induced map $\QQ \to \Sym(G \otimes_{\ZZ}\QQ)$ is also a ring homomorphism.
\end{lem}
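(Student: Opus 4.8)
The plan is to exploit the universal property of $\Sym$ established just above, namely that $(\Sym G, G\to\Sym G)$ is the initial object in the category of pairs $(R,f)$ with $R$ a semiring and $f\colon G\to(R,+)$ a semigroup homomorphism such that $f\circ s$ is a semiring homomorphism. First I would show $\Sym G$ is a ring, i.e.\ that every element has an additive inverse. For a generator $[g]$, pick the inverse $-g$ in the group $G$ and note $[g]+[-g]=[g+(-g)]=[0]=0$ by relations (1) and (2); since $\Sym G$ is generated as a semiring by the $[g]$, and sums and products of elements having additive inverses again have additive inverses (for products, $(-u)v$ works), every element of $\Sym G$ has an inverse, so $\Sym G$ is a ring. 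Next, because $\Sym G$ is a ring, the semigroup homomorphism $s\colon\NN\to G$ followed by $G\to\Sym G$ is a semiring homomorphism $\NN\to\Sym G$ into a ring, hence extends uniquely to a ring homomorphism $\ZZ\to\Sym G$; the statement that $s$ itself extends to a group homomorphism $\ZZ\to G$ is immediate since $G$ is a group and $\NN\to G$ is a semigroup homomorphism (send $-n\mapsto -s(n)$), and the extended $\ZZ\to G\to\Sym G$ agrees with the ring map $\ZZ\to\Sym G$ by uniqueness on $\NN$.

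For the second assertion, I would identify both $\Sym(G\otimes_\ZZ\QQ)$ and $(\Sym G)\otimes_\ZZ\QQ$ via a universal property and then invoke uniqueness of initial objects. Concretely, $(\Sym G)\otimes_\ZZ\QQ$ is a $\QQ$-algebra, in particular a ring, equipped with a composite semigroup homomorphism $G\to\Sym G\to(\Sym G)\otimes_\ZZ\QQ$; I claim this factors through $G\otimes_\ZZ\QQ$. Indeed $G\otimes_\ZZ\QQ$ is the localisation of the abelian group $G$ at $\ZZ\setminus\{0\}$, and in $(\Sym G)\otimes_\ZZ\QQ$ every positive integer is invertible (it is the image of an invertible element of $\QQ$), so the map $G\to(\Sym G)\otimes_\ZZ\QQ$ extends uniquely to a semigroup homomorphism $G\otimes_\ZZ\QQ\to((\Sym G)\otimes_\ZZ\QQ,+)$ whose composition with $\NN\to G\otimes_\ZZ\QQ$ (which equals $\NN\to\QQ\to G\otimes_\ZZ\QQ$ after extending $s$) is a semiring homomorphism. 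By the universal property of $\Sym(G\otimes_\ZZ\QQ)$ this yields a semiring homomorphism $\Sym(G\otimes_\ZZ\QQ)\to(\Sym G)\otimes_\ZZ\QQ$. Conversely, $\Sym(G\otimes_\ZZ\QQ)$ is a ring (by the first part, as $G\otimes_\ZZ\QQ$ is a group) in which $\QQ$ maps in as a subring, hence it is a $\QQ$-algebra; the functoriality of $\Sym$ applied to $G\to G\otimes_\ZZ\QQ$ gives $\Sym G\to\Sym(G\otimes_\ZZ\QQ)$, which by the $\QQ$-algebra structure extends to $(\Sym G)\otimes_\ZZ\QQ\to\Sym(G\otimes_\ZZ\QQ)$. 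One then checks these two maps are mutually inverse by verifying they are the identity on generators ($[g]$ on one side, $1\otimes1$ and the image of $G$ on the other), which is routine since both sides are generated by the image of $G$ together with scalars. Finally, the induced map $\QQ\to\Sym(G\otimes_\ZZ\QQ)$ is a ring homomorphism because it is the composite $\QQ\to G\otimes_\ZZ\QQ\to\Sym(G\otimes_\ZZ\QQ)$ of the extension of $s$ with the canonical map, and $s\colon\NN\to\QQ\otimes G$ composed with the canonical map is a semiring homomorphism by relation (3).

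I expect the main obstacle to be bookkeeping rather than conceptual: one must be careful that the several maps $\NN\to G$, $\ZZ\to G$, $\NN\to\QQ$, $\QQ\to G\otimes_\ZZ\QQ$ are all compatible, so that the relevant "$f\circ s$ is a semiring homomorphism" hypothesis is genuinely available at each application of the universal property, and that localising a possibly-non-cancellative abelian semigroup is not an issue here because $G$ is already a group (so $G\otimes_\ZZ\QQ$ really is the honest $\QQ$-vector space one expects, with no collapsing). The cautionary example in the source (where $\Sym$ of a badly-behaved monoid collapses) does not interfere, precisely because all the monoids appearing in this lemma are groups.
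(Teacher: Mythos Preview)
Your argument is correct and carefully handles the bookkeeping; the paper itself gives no proof beyond ``Easy; left to the reader,'' so there is nothing to compare against and you have supplied exactly the details the authors omit. One cosmetic point: when you say $\QQ$ ``maps in as a subring'' of $\Sym(G\otimes_\ZZ\QQ)$, you only need (and only establish) that there is a ring homomorphism $\QQ\to\Sym(G\otimes_\ZZ\QQ)$, not that it is injective---but that is all that is required to make the target a $\QQ$-algebra.
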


\begin{proof}
Easy; left to the reader.
\end{proof}

\private{I checked the proofs. --Immi}

In general, the canonical map $G \to \Sym G$ does not need to be injective.
However, in those cases we are interested in, it is. To prove this, we use the following two lemmas.

\begin{lem}\label{lem.SQ}
Suppose that $G$ is a torsion-free group and that (as before) $s\colon \NN  \to G$ is a semigroup homomorphism.
Then the map $G \to \Sym G$ is injective and $\Sym G$ is an integral domain.
\end{lem}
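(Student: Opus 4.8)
The plan is to reduce the statement to a concrete algebraic model of $\Sym G$ and then exploit torsion-freeness. First I would observe that by Lemma~\ref{lem.ZQ} (applied after the fact that $G$ is already a group) $\Sym G$ is a ring and $s$ extends to a group homomorphism $\ZZ \to G$; moreover, since $G$ is torsion-free, the image $s(1)$ is either $0$ or a non-torsion element, and relation~(3) of Definition~\ref{defn.sym} forces $s(1)$ to be the unit $1$ of $\Sym G$. I would then build $\Sym G$ explicitly: start from the monoid semiring (over $\NN$, or better over $\ZZ$ once we know we have a ring) on the free commutative monoid generated by symbols $[g]$, $g \in G$, and impose relations (1)--(3). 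Relation~(1) says the generators behave multiplicatively like a copy of $G$ written exponentially; so the natural candidate is the group ring $\ZZ[G]$ (with $G$ written multiplicatively as $\{t^g : g \in G\}$, $t^g t^{g'} = t^{g+g'}$), and relation~(2) is automatic ($t^0 = 1$), while relation~(3) is the identification $t^{s(1)} = 1$, i.e., we must quotient $\ZZ[G]$ by the ideal generated by $t^{s(1)} - 1$.

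The key step is therefore to prove the canonical map $\Sym G \to \ZZ[G]/(t^{s(1)}-1)$ is an isomorphism and that this quotient is an integral domain into which $G$ embeds. To see it is an isomorphism one checks the universal property: any semiring map $f\colon G \to (R,+)$ with $f\circ s$ a semiring homomorphism extends to $\ZZ[G] \to R$ by $t^g \mapsto f(g)$ — here one uses that $f(g+g') = f(g)+f(g')$ translates into multiplicativity only after noting $f$ lands in a ring where the relevant element is the unit, which is exactly guaranteed by $f(s(1)) = 1$; this extension kills $t^{s(1)}-1$, giving factorization through the quotient, and uniqueness is clear since the $t^g$ generate. For the integral-domain claim I would use that $G$ torsion-free implies $G$ is an ordered abelian group, hence $\ZZ[G]$ is an integral domain (leading-term argument with respect to the order on $G$). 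Then $t^{s(1)}-1$ is a prime element of $\ZZ[G]$: writing $m = s(1)$, the localization-type argument shows $\ZZ[G]/(t^m - 1) \cong \ZZ[G']$ where $G' = G/\ZZ m$ if $m$ is not divisible in $G$, and more generally $\ZZ[G]/(t^m-1) \hookrightarrow \ZZ[G \otimes \QQ]/(t^{m}-1)$; over $\QQ$ the group $G\otimes\QQ$ is a $\QQ$-vector space, $m$ generates a line, and $\QQ[G\otimes\QQ]/(t^m - 1)$ is $\QQ[\text{(line)}]/(t^m-1) \otimes \QQ[\text{complement}]$, the first factor being $\QQ[x,x^{-1}]/(x-1) \cong \QQ$ — so the whole thing is a domain. (If $s(1)=0$ then relation~(3) makes $\Sym G$ trivial unless... actually $s(1)=0$ with relation~(3) gives $1 = [0] = 0$; but then $G$ torsion-free doesn't rule this out, so I should handle it: if $s(1)=0$ the ring is the zero ring, which is technically not a domain, so I expect the intended reading requires $s$ injective or $G$ to be one of the specific semigroups used later — I would add the hypothesis $s(1)\neq 0$, or note that in the applications $s$ is injective.)

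Finally, injectivity of $G \to \Sym G$ follows: the composite $G \to \ZZ[G] \to \ZZ[G]/(t^{s(1)}-1)$ sends $g$ to $t^g$, and two group elements $g, g'$ have $t^g \equiv t^{g'} \pmod{t^{s(1)}-1}$ iff (using the ordered structure / the $\QQ$-vector-space picture above) $g - g' \in \ZZ\cdot s(1)$ and the coefficient matches, which for a genuine monomial identity forces $g = g'$ unless $s(1)$ is a torsion element — impossible here. So $g \mapsto t^g$ is injective, hence $G \to \Sym G$ is injective.

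The main obstacle I anticipate is pinning down $\ZZ[G]/(t^{s(1)}-1)$ cleanly, i.e., verifying that quotienting the group ring of an ordered abelian group by $t^m - 1$ (for a single non-torsion $m$) stays an integral domain; the slick route is to tensor with $\QQ$ to make $G\otimes\QQ$ a vector space, split off the line spanned by $m$, and reduce to the trivial computation $\QQ[x^{\pm1}]/(x-1)\cong\QQ$, then descend injectivity back to $\ZZ[G]$ via torsion-freeness of $G \hookrightarrow G\otimes\QQ$. The bookkeeping — especially making sure relation~(2) is not lost and that the case $s(1)=0$ is excluded or trivial — is the part that needs care rather than cleverness.
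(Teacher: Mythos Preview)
Your proposal rests on a misreading of Definition~\ref{defn.sym}. Relation~(1) is $[g] + [g'] = [g+g']$: it is the \emph{addition} in $\Sym G$ that restricts to the group law of $G$, while the multiplication of the generators $[g]$ remains free. Hence $\Sym G$ is not the group-ring quotient $\ZZ[G]/(t^{s(1)}-1)$, and your universal-property check fails at exactly this point: a homomorphism $f\colon G \to (R,+)$ does not induce a ring map $\ZZ[G]\to R$ via $t^g \mapsto f(g)$, since that would require $f(g+g') = f(g)f(g')$ rather than $f(g+g')=f(g)+f(g')$. To see concretely that the two rings differ, take $G = \ZZ^2$ with $s(1) = (1,0)$: then $[(m,n)] = m + na$ with $a := [(0,1)]$, so $\Sym G \cong \ZZ[a]$ and $a$ is not a unit, whereas your model gives $\ZZ[\ZZ^2]/(t^{(1,0)}-1)\cong\ZZ[t_2^{\pm1}]$, in which the image of $(0,1)$ is a unit.

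The correct concrete model is the symmetric algebra of the $\ZZ$-module $G$ modulo the ideal $(s(1)-1)$, and this is what the paper exploits: after passing to $G\otimes_\ZZ\QQ$ (via Lemma~\ref{lem.ZQ}), it realises $\Sym G$ as the directed union $\bigcup_{k\ge1}\mathcal{S}^k G$ of symmetric powers of the $\QQ$-vector space $G$, with transition maps given by multiplication by $s(1)$. Choosing a basis of $G$ containing $s(1)$ identifies this with a polynomial ring over $\QQ$, whence both injectivity of $G\to\Sym G$ and integrality follow. Your side remark about the degenerate case $s(1)=0$ is correct and applies equally to the paper's statement; in all the applications $s$ is injective.
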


\begin{proof}
We may assume that $G$ is divisible. (Once we have the lemma for $G \otimes_\ZZ \QQ$, it follows for $G$.) Given $k \ge 1$, write $\mathcal S^k G$ for the $k$-th symmetric power of $G$, considered as a $\QQ$-vector space.
We identify $\mathcal S^k G$ with a sub-space of $\mathcal S^{k+1} G$ using the map
$g_1 \otimes \dots \otimes g_i \mapsto g_1 \otimes \dots \otimes g_i \otimes s(1)$.
Using this identification, we have
\[
\Sym G = \bigcup_{k \ge 1} \mathcal S^k G
\]
This description implies the lemma.
\end{proof}

\begin{lem}\label{lem.inject.general}
Suppose that $G \subseteq G'$ are two semigroups satisfying the following conditions for every $g, h \in G'$:
\begin{enumerate}
 \item If $g+h = 0$, then $g = h = 0$.
 \item If $g+h \in G$ then $g \in G$ and $h \in G$.
\end{enumerate}
Then the induced map $\Sym G \to \Sym G'$ is injective, and, identifying $\Sym G$ with its image in $\Sym G'$,
for any $g' \in G'$ whose image $[g'] \in \Sym G'$ lies in $\Sym G$, we have $g' \in G$.
\end{lem}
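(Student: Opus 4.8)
The plan is to present the underlying commutative monoid of $\Sym G$ as a filtered colimit of ``symmetric powers'' of $G$ --- the homogeneous decomposition already used in the proof of Lemma~\ref{lem.SQ} --- and then, at each finite level, to produce an explicit retraction by adjoining an absorbing element.

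We may assume $s(1)\neq 0$, the case $s(1)=0$ being degenerate ($\Sym G=\Sym G'=0$). For $H\in\{G,G'\}$ write $H^{+}:=H\setminus\{0\}$, and for $k\geq 1$ let $\mathcal S^k H$ be the commutative monoid with generators $e_M$, one for each size-$k$ multiset $M$ over $H^{+}$, subject to the relations $e_{M\uplus\{g\}}+e_{M\uplus\{h\}}=e_{M\uplus\{g+h\}}$ for $M$ a size-$(k-1)$ multiset over $H^{+}$ and $g,h\in H^{+}$ (note $g+h\in H^{+}$ automatically, by condition~(1)). Using condition~(1) again --- so that the relation $[g]+[h]=[g+h]$ never collapses a nonzero monomial to $0$ --- one checks, just as in the proof of Lemma~\ref{lem.SQ}, that $\mathcal S^k H$ is the degree-$k$ component of the $\NN$-graded semiring obtained from Definition~\ref{defn.sym} by dropping relation~(3); and since imposing $[s(1)]=1$ amounts to inverting multiplication by $[s(1)]$, the underlying monoid of $\Sym H$ is $\varinjlim_k\mathcal S^k H$, with transition maps $e_M\mapsto e_{M\uplus\{s(1)\}}$. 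The inclusion $G\subseteq G'$ induces homomorphisms $\mathcal S^k G\to\mathcal S^k G'$, $e_M\mapsto e_M$, compatible with the transition maps, whose colimit is the map $\Sym G\to\Sym G'$ of the lemma. Since a filtered colimit of injective homomorphisms of commutative monoids is injective, it suffices to show each $\mathcal S^k G\to\mathcal S^k G'$ is injective.

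Fix $k$. Condition~(2) says precisely that $I:=G'\setminus G$ is an ideal of the monoid $G'$ (that is, $I+G'\subseteq I$); in particular a size-$k$ multiset over $(G')^{+}$ indexes a generator of $\mathcal S^k G$ if and only if all its entries lie in $G$. Let $T:=\mathcal S^k G\cup\{\infty\}$, with $\infty$ absorbing for $+$ (a commutative monoid again), and define a homomorphism $r$ from the free commutative monoid on the size-$k$ multisets over $(G')^{+}$ to $T$ by sending $e_M$ to its class in $\mathcal S^k G$ when all entries of $M$ lie in $G$, and to $\infty$ otherwise. The key (and only delicate) point is that $r$ respects the defining relations of $\mathcal S^k G'$: in $e_{M\uplus\{g\}}+e_{M\uplus\{h\}}=e_{M\uplus\{g+h\}}$, either all entries of $M$, $g$ and $h$ lie in $G$ --- then so does $g+h$ and the relation is a defining relation of $\mathcal S^k G$ --- or some entry of $M\uplus\{g\}\uplus\{h\}$ lies in $I$, in which case, $I$ being an ideal, both sides evaluate to $\infty$ under $r$. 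Hence $r$ descends to $\bar r\colon\mathcal S^k G'\to T$, and $\bar r$ composed with $\mathcal S^k G\to\mathcal S^k G'$ is the inclusion $\mathcal S^k G\hookrightarrow T$, which is injective; so $\mathcal S^k G\to\mathcal S^k G'$ is injective, and the first assertion of the lemma follows.

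For the second assertion, let $g'\in G'$ with $[g']\in\Sym G'$ in the image of $\Sym G$; we may assume $g'\neq 0$, so $[g']$ is the image of $e_{\{g'\}}\in\mathcal S^1 G'$. Unwinding the colimit yields, for some $l\geq 1$, an identity $e_N=y$ in $\mathcal S^l G'$, where $N$ is the size-$l$ multiset consisting of $g'$ and $l-1$ copies of $s(1)$, and $y$ is the image of an element of $\mathcal S^l G$. Applying the map $\bar r\colon\mathcal S^l G'\to\mathcal S^l G\cup\{\infty\}$ constructed as above, we get $\bar r(y)\in\mathcal S^l G$ (since $\bar r$ restricts to the inclusion on the image of $\mathcal S^l G$), whereas $\bar r(e_N)=\infty$ unless every entry of $N$ lies in $G$, i.e.\ unless $g'\in G$ (recall $s(1)\in G$); as $\infty\notin\mathcal S^l G$, this forces $g'\in G$. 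The two points needing some care are the identification of the underlying monoid of $\Sym H$ with $\varinjlim_k\mathcal S^k H$ (routine bookkeeping parallel to Lemma~\ref{lem.SQ}, and where condition~(1) is used) and the verification that $r$ respects the relations of $\mathcal S^k G'$ (a short case distinction, and where condition~(2) is used); I expect the latter to be the essential, though still elementary, obstacle.
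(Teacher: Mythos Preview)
Your argument is correct, and it shares its core mechanism with the paper's proof: adjoin an absorbing element $\infty$, send everything coming from $G'\setminus G$ to $\infty$, and use condition~(2) to verify that the defining relations are respected; condition~(1) guarantees that nothing accidentally collapses to $0$.

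The execution, however, is different. The paper works directly at the semiring level: it sets $S:=\Sym G\cup\{\infty\}$, checks that $S$ is a semiring, extends $G\to\Sym G$ to a map $f\colon G'\to S$ by sending $G'\setminus G$ to $\infty$, and then invokes the universal property of $\Sym G'$ to obtain a semiring map $p\colon\Sym G'\to S$ whose composite with $\Sym G\to\Sym G'$ is the inclusion. This is considerably shorter than your route, since it bypasses the graded decomposition and the colimit description entirely. On the other hand, the paper's verification that $S$ is a semiring needs the implication ``$a+b=0\Rightarrow a=b=0$'' in $\Sym G$ (not just in $G$), which it asserts as a consequence of condition~(1) but does not spell out; your monoid-level argument sidesteps this, since adjoining an absorbing element to a commutative monoid is always harmless. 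So your extra scaffolding (the presentation $\Sym H\cong\varinjlim_k\mathcal S^k H$) is genuine overhead, but it buys you a cleaner verification at the point where you apply the retraction. Either way the essential content---that condition~(2) is exactly what makes the ``send $I$ to $\infty$'' map well-defined---is the same.

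One small remark: your treatment of the degenerate case $s(1)=0$ is a bit quick. If $s(1)=0$ then indeed $\Sym G=\Sym G'=0$, so injectivity is trivial; but the second assertion then says that every $g'\in G'$ lies in $G$, which need not hold. The paper does not address this either, and in the intended applications $s$ is always the canonical inclusion $\NN\hookrightarrow G$, so the point is moot.
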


\begin{proof}
We may assume that $G\ne G'$.
We extend the semiring $\Sym G$ by a single element as follows: $S := \Sym G \cup \{\infty\}$, with $a + \infty = \infty$ for any $a \in S$,
$0 \cdot \infty = 0$ and $a \cdot \infty = \infty$ for any $a \ne 0$.
An easy computation shows that $S$ is a semiring again. In that computation, one uses assumption (1) to obtain distributivity:
For $a, b\in \Sym G$, we have: $(a + b)\cdot \infty = 0 \iff a + b = 0 \iff (a = 0 \wedge b = 0) \iff a\cdot \infty + b\cdot \infty = 0$.

Now we extend the map $G \to \Sym G$ to a map $f\colon G' \to S$ by sending any $g \in G' \setminus G$ to $\infty$.
Using assumption (2), we obtain that the images of this map satisfy all the relations of Definition~\ref{defn.sym}.
This implies that $f$ factors over $\Sym G'$, i.e., for $g' \in G'$, we have $f(g') = p([g'])$ for some surjective map
$p\colon \Sym G' \to S$. The composition $\Sym G \to \Sym G' \overset{p}{\to} S$ is just the inclusion map $\Sym G \to S$,
since $\Sym G$ is generated by elements of the form $[g]$ for $g \in G$, and such an element gets sent to
$f(g) = [g]$.

This implies that the map $\Sym G \to \Sym G'$ is injective.
Moreover, given $g' \in G'$, we have $g' \in G$ iff $f(g') \ne \infty$ iff $[g'] \in \Sym G$.
\end{proof}

We now apply this semiring construction to the semigroups we are really interested in: the (semi-)groups $N$, $\Ni$, $Z$ and $Q$ from Subsection~\ref{sect.term}.
Here, the map $s$ is always the embedding of $\NN$ coming from $\NN \subset \ZZ \prec Z$.

\begin{lem}\label{lem.inject}
In the following commutative diagram, all maps are injective:
\begin{center}
\begin{tikzpicture}
\matrix[matrix of math nodes,row sep=2ex,column sep={.5em}]{
      &|[name=ni1]| \Ni   &    &|[name=ni2]| \Sym \Ni \\
  |[name=n1]| N      &    &|[name=n2]| \Sym  N    \\
      &|[name=z1]| Z      &    &|[name=z2]| \Sym  Z  \\
   &   &|[name=q1]| Q      &    &|[name=q2]| \Sym  Q  \\
};
\draw[->] (n1) edge (ni1);  \draw[->]     (n2) edge (ni2);
\draw[->] (n1) edge (z1);  \draw[->]     (n2) edge (z2);
\draw[->] (z1) edge (q1);  \draw[->]     (z2) edge (q2);
\draw[->] (ni1) edge (ni2);
\draw[->] (n1) edge (n2);
\draw[->] (z1) edge (z2);
\draw[->] (q1) edge (q2);
\end{tikzpicture}
\end{center}
\end{lem}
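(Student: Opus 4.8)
The plan is to verify the injectivity of each arrow in the diagram by combining the three preparatory lemmas (Lemmas~\ref{lem.SQ}, \ref{lem.inject.general}, and the general nonsense about $\Sym$) with the concrete arithmetic of the semigroups $N \subseteq \Ni$, $N \subseteq Z$, $Z \subseteq Q$. Since the diagram commutes, it suffices to treat each edge; I would organize the argument in three groups.

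First I would handle the three ``horizontal'' maps $N \to \Sym N$, $Z \to \Sym Z$, $Q \to \Sym Q$ together with the induced maps on $\Sym$. The groups $Z$ and $Q$ are torsion-free (indeed $Q$ is divisible and $Z$ is a $\ZZ$-group, hence torsion-free as an abelian group), so Lemma~\ref{lem.SQ} immediately gives that $Z \to \Sym Z$ and $Q \to \Sym Q$ are injective and that $\Sym Z$, $\Sym Q$ are integral domains. For $N \to \Sym N$: apply Lemma~\ref{lem.inject.general} with $G = N \subseteq G' = Z$. Condition~(1) holds because $N$ consists of nonnegative elements of an ordered group, so $g + h = 0$ with $g,h \ge 0$ forces $g = h = 0$; condition~(2) holds because if $g + h \in N$ with $g, h \in Z$ then... wait, this is not automatic — one needs $g, h \ge 0$. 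Actually condition~(2) as stated requires $g+h \in G = N$ to imply $g, h \in N$, which is false for $g = 1, h = -1$. So instead I would deduce injectivity of $N \to \Sym N$ directly from injectivity of $Z \to \Sym Z$: the composite $N \to \Sym N \to \Sym Z$ factors through $Z \to \Sym Z$ which is injective on $N$, hence $N \to \Sym N$ is injective. The same factorization argument shows $\Sym N \to \Sym Z$ is injective once we know... hmm, that is exactly what Lemma~\ref{lem.inject.general} is for, applied with the pair $(N, Z)$: here condition~(2) \emph{is} needed and \emph{is} the subtle point. Let me reconsider: for $G = N$, $G' = Z$, condition (2) says $g + h \in N \Rightarrow g, h \in N$ for $g, h \in Z$, which genuinely fails.

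So the correct strategy for the arrows out of $N$ must avoid applying Lemma~\ref{lem.inject.general} to the pair $(N,Z)$ in that direction; instead I would apply it to the pair $(N, \Ni)$ and to $(\Ni, \text{something})$, and get the arrow $\Sym N \to \Sym Z$ by a different route. Concretely: (a) $\Ni \to \Sym\Ni$ and $\Sym N \to \Sym\Ni$: apply Lemma~\ref{lem.inject.general} with $G = N$, $G' = \Ni = N \cup\{\infty\}$. Condition (1): $g + h = 0$ in $\Ni$ — if either is $\infty$ the sum is $\infty \ne 0$, else we are in $N$ and it is fine. Condition (2): $g + h \in N$ with $g, h \in \Ni$ — if either were $\infty$ the sum would be $\infty \notin N$, so both lie in $N$; fine. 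Thus $\Sym N \to \Sym\Ni$ is injective and $\Ni \to \Sym\Ni$ is injective on the subset $\Ni$. (b) $N \to \Sym N$: injective because $N \to \Sym N \to \Sym\Ni$ equals $N \to \Ni \to \Sym\Ni$ which is injective by (a). (c) $N \to \Ni$ is the inclusion, injective trivially. (d) For $\Sym N \to \Sym Z$ and $\Sym Z \to \Sym Q$: the latter is injective since it is $\mathrm{id}\otimes(\QQ\hookrightarrow)$ after Lemma~\ref{lem.ZQ} identifies $\Sym Q = (\Sym Z)\otimes_\ZZ\QQ$ and $\Sym Z$ is a torsion-free ring (a domain). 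For $\Sym N \to \Sym Z$: factor as $\Sym N \to \Sym\Ni$; but $\Ni$ does not map to $Z$. Better: use that $\Sym N \to \Sym Z \to \Sym Q$ and analyze $\Sym N \to \Sym Q$. Here I would apply Lemma~\ref{lem.inject.general} with $G = N$, $G' = N' := \{q \in Q \mid q \ge 0\}$, the nonnegative cone of $Q$: condition (1) holds as before; condition (2), $g + h \in N$ with $g, h \ge 0$ in $Q$ — this still need not force $g,h \in N$ (e.g.\ $g = h = 1/2$, sum $= 1 \in N$), so this also fails.

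The genuine obstacle, then, is injectivity of the maps $\Sym N \to \Sym Z$, $\Sym Z \to \Sym Q$ (and hence the ``bottom-right'' of the diagram): Lemma~\ref{lem.inject.general} does not directly apply because the cones $N$, $Z_{\ge 0}$ are not ``downward closed'' inside the larger group in the sense of condition~(2). I expect the intended resolution is: $\Sym Z \to \Sym Q$ is injective by Lemma~\ref{lem.ZQ} (flatness of $\QQ$ over $\ZZ$ plus $\Sym Z$ torsion-free), and $\Sym N \to \Sym Z$ is injective because $\Sym N \to \Sym\Ni$ is injective by (a) above and because the arrow $\Sym N \to \Sym Z$ can be compared with $\Sym N \to \Sym\Ni$ via a retraction $\Sym\Ni$-to-$\Sym N$-compatible structure — more precisely, there is a semigroup homomorphism $\Ni \to N$? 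No. Let me instead argue: the composite $N \hookrightarrow Z$ together with the order shows every element of $\Sym N$ is a polynomial in the $[a]$, $a \in N$, and the relation $[a]+[b]=[a+b]$ together with $[0]=0$, $[1]=1$ lets us rewrite; injectivity of $\Sym N \to \Sym Z$ then reduces, via the explicit description $\Sym G = \bigcup_k \mathcal S^k G$ from the proof of Lemma~\ref{lem.SQ} applied to $Z$ (and its sub-monoid generated by $N$), to linear independence of monomials in $N$ over $\QQ$ inside the symmetric powers of $Z$, which holds since $N$ spans a free $\ZZ$-submodule. In short: the hard part is not any single diagram chase but recognizing that the four ``$\Sym$-of-a-monoid into $\Sym$-of-the-enveloping-group'' injectivities must be obtained from Lemma~\ref{lem.SQ}'s explicit filtration (for the torsion-free cases $Z, Q$) and from Lemma~\ref{lem.inject.general} (only for the pair $(N, \Ni)$, where condition (2) does hold), and then everything else follows by commutativity of the square and the already-established injectivity of the torsion-free horizontal maps. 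I would write the proof as: (i) cite Lemma~\ref{lem.SQ} for the $Z$- and $Q$-rows; (ii) cite Lemma~\ref{lem.ZQ} for $\Sym Z \to \Sym Q$; (iii) apply Lemma~\ref{lem.inject.general} to $(N,\Ni)$ for the $N$-to-$\Ni$ arrows; (iv) deduce the remaining arrows $N \to \Sym N$, $\Sym N \to \Sym Z$, $N\to Z$, $Z \to Q$ by chasing the commuting diagram, using that a map whose post-composition with an injection is injective is itself injective.
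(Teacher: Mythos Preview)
Your eventual plan (i)--(iv) is exactly the paper's argument, reached after considerable back-and-forth. The paper compresses it to two steps: first, Lemma~\ref{lem.SQ} gives injectivity of $Q \to \Sym Q$, and combined with the inclusions $N \hookrightarrow Z \hookrightarrow Q$ this is declared to yield every arrow in the bottom three rows of the diagram; second, Lemma~\ref{lem.inject.general} is applied to the pair $N \subset \Ni$ to obtain $\Sym N \hookrightarrow \Sym\Ni$, and its ``moreover'' clause (together with the already-established $N \hookrightarrow \Sym N$) gives $\Ni \hookrightarrow \Sym\Ni$. You are right that Lemma~\ref{lem.inject.general} cannot be applied to $(N, Z)$ since condition~(2) fails---and the paper never tries to apply it there.

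One genuine inconsistency remains in your write-up: in the final summary (iv) you place $\Sym N \to \Sym Z$ under ``diagram chase,'' but a few lines earlier you correctly observed that no pure diagram chase yields this arrow (there is no map out of $\Sym Z$ through which $\Sym N \to \Sym Z$ composes to something already known injective, and there is no map $\Sym\Ni \to \Sym Z$ to exploit your step~(iii)). The paper is equally laconic on this point and simply groups this arrow with the others. Your earlier instinct---to use the explicit description $\Sym Q = \bigcup_k \mathcal S^k Q$ from the proof of Lemma~\ref{lem.SQ}---is the right way to close this if you want a fully detailed argument; the diagram chase alone does not suffice.
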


\begin{proof}
Injectivity of $Q \to \Sym Q$ follows from Lemma~\ref{lem.SQ}.
Using injectivity of $N \inject Z \inject Q$, this implies
injectivity of all maps not involving the top line of the diagram.

By applying Lemma~\ref{lem.inject.general} to $N \subset N_\infty$
(which satisfies the assumptions of the lemma), we get that the map $\Sym N \to \Sym \Ni$ is injective,
and we moreover get that the image of $\infty \in \Ni$ in $\Sym \Ni$ lies outside of $\Sym N$,
which in particular implies (together with injectivity of $N \to \Sym N$) that
$\Ni \to \Sym \Ni$ is injective.
\end{proof}

\begin{notn}
From now one, we use Lemma~\ref{lem.inject} to identify $N$, $\Ni$, $Z$, and $Q$ with their images in $\Sym N$, $\Sym \Ni$, $\Sym Z$, and $\Sym Q$,
respectively. In particular, we omit the square brackets introduced in Definition~\ref{defn.sym}.
\end{notn}

\begin{rem}\label{rem.inj}
Given an elementary extension $Z \prec Z'$, we get that the map $\Sym Q \to \Sym Q'$ is injective (using the explicit construction of $\Sym Q'$ from the proof of Lemma~\ref{lem.SQ}). It now follows by Lemma \ref{lem.inject} that also the maps $\Sym N \to \Sym N'$, $\Sym Z \to \Sym Z'$ and $\Sym Q \to \Sym Q'$ are injective.
However, we do not see a simple proof that the map $\Sym \Ni \to \Sym \Ni'$ is
injective. However, it will follow \emph{a posteriori} once we know that those rings are isomorphic to certain Grothendieck rings
of definable sets; see the comments after Theorem~\ref{thm.groth},
\end{rem}

We will need some understanding of when two terms built out of elements of $N_\infty$ are equal as elements of $\Sym N_\infty$.

\begin{defn}\label{defn.eats}
For $a, b \in \Sym N_\infty$, we say that $a$ \emph{eats} $b$ if $a + b = a$.
\end{defn}

\begin{lem}\label{lem.eat}
Let $a_1, \dots, a_n$ and $b_1, \dots, b_n$ be elements of $N_\infty$.
Suppose that for each $i \le n$, there exists $k \in \NN$ such that $k a_i \ge b_i$.
Suppose moreover that $a_1 = \infty$. Then
$a := a_1\cdots a_n$ eats $b := b_1 \cdots b_n$ (in $\Sym N_\infty$).
\end{lem}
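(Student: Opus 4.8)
The plan is to reduce the statement to a single calculation in $\Sym N_\infty$ involving $\infty$, by first absorbing each $b_i$ into a power of $a_i$. Since $k a_i \ge b_i$ in $N_\infty$ means $k a_i = b_i + c_i$ for some $c_i \in N_\infty$, the first relation of Definition~\ref{defn.sym} gives $k a_i = b_i + c_i$ in $\Sym N_\infty$ as well. Hence, writing $a' := (k_1 a_1)\cdots (k_n a_n)$ with $k_i$ chosen as in the hypothesis, we can expand $a'$ using distributivity: $a'$ is a sum of monomials, one of which is $b_1 \cdots b_n = b$, and all others are products in which each factor is one of $b_i$ or $c_i$. In particular $a' = b + r$ for some $r \in \Sym N_\infty$. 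On the other hand, $a' = (k_1\cdots k_n)\, a_1 \cdots a_n = (k_1 \cdots k_n)\, a$, where $k_1 \cdots k_n$ is interpreted as the image of a natural number; since $a_1 = \infty$, I want to show $(k_1\cdots k_n) a = a$, i.e., that $a$ eats any finite multiple of itself.

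This last point is where the hypothesis $a_1 = \infty$ is used, and it comes down to the relation $[\infty] + [\infty] = [\infty + \infty] = [\infty]$ in $\Sym N_\infty$ (using relation~(1) and $\infty + \infty = \infty$ in $N_\infty$). Thus $2\cdot\infty = \infty$ in $\Sym N_\infty$, hence $m\cdot \infty = \infty$ for every $m \ge 1$ by induction, and therefore $m\cdot a = m\cdot(\infty \cdot a_2 \cdots a_n) = (m\cdot\infty)\cdot a_2\cdots a_n = \infty \cdot a_2 \cdots a_n = a$ for every $m \ge 1$ (the case $m = k_1\cdots k_n$, which could be $0$ only if some $k_i = 0$, i.e., $b_i = 0$; but then that factor of $b$ is $0$, so $b = 0$ and $a$ trivially eats $b$ — so we may assume all $k_i \ge 1$). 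Combining: $a = a' = b + r$, so $a + b = (b + r) + b = (b + b) + r$. Now $b + b = 2 b$, and $b = b_1 \cdots b_n$ with $b_1$ eaten by... — actually it is cleaner to absorb $b$ directly: from $a = b + r$ we get $a + b = b + r + b \ge b + r = a$, but we need equality, so instead note $a + b = b + r + b$; since $a = b + r$ eats itself is not automatic. The clean route is: $a + b \le a + a = a$ (using $m a = a$ with $m=2$ and $b \le a$ since $b = b_1\cdots b_n \le (k_1 a_1)\cdots(k_n a_n) = (k_1\cdots k_n) a = a$), and $a \le a + b$ trivially; here "$x \le y$" should be read as "$y$ eats nothing extra", i.e., $x + z = y$ for some $z$, which is transitive and compatible with the eating relation, giving $a + b = a$.

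I expect the main obstacle to be purely bookkeeping: making the partial order "$x \le y$ iff $\exists z\, (x + z = y)$" precise on $\Sym N_\infty$ and checking it is preserved under addition and that it interacts correctly with the "eats" relation, so that $b \le a$ together with $a$ eating $a$ yields $a$ eating $b$. None of this is deep — it follows formally from commutativity, associativity, and the computation $2\infty = \infty$ — but one must be careful that $\le$ on $\Sym N_\infty$ is genuinely a preorder and that $x \le y$, $y$ eats $z$ imply... the only nontrivial implication needed is: if $b \le a$ and $a + a = a$ then $a + b = a$, which holds since $a + b \le a + a = a \le a + b$. So the proof is: reduce to all $k_i \ge 1$; expand $(k_1\cdots k_n) a = b + r$; deduce $b \le a$; use $a_1 = \infty$ to get $2a = a$; conclude $a + b = a$.
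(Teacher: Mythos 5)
The final step of your argument has a genuine gap. You reduce to the claim ``if $a + a = a$ and $b + z = a$ for some $z$, then $a + b = a$'', and you justify it by ``$a + b \le a + a = a \le a + b$''. This presupposes that the preorder on $\Sym\Ni$ defined by $x \le y$ iff there exists $z$ with $x + z = y$ is antisymmetric. Antisymmetry is not a formal consequence of the commutative-semiring axioms: in the commutative monoid freely generated by $\{b, z\}$ modulo the single relation $2(b + z) = b + z$, the element $a := b + z$ satisfies $a + a = a$ and $b + z = a$, yet $a + b \ne a$. In $\Sym\Ni$ the implication you need is precisely the Cantor--Schr\"oder--Bernstein statement of Corollary~\ref{cor.inin} (applied to $a + (b + z) = a + a = a$); but the paper derives that corollary only after Theorem~\ref{thm.groth}, whose proof relies on Lemma~\ref{lem.eat} via Lemma~\ref{lem.unbound-eat}. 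Invoking it here would therefore be circular.

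The root cause is that your argument uses only the absorption $\infty + \infty = \infty$ (to obtain $ma = a$), whereas one genuinely needs the stronger absorption $\infty + b_1 = \infty$. The paper's proof uses this directly: in the special case $a_i \ge b_i$ for all $i$, setting $a' := a_2\cdots a_n$, $b' := b_2\cdots b_n$ and writing $a' = b' + c'$ for a suitable $c' \in \Sym\Ni$, one computes
\[
a = \infty a' = \infty b' + \infty c' = (\infty + b_1)\,b' + \infty c' = \infty b' + b_1 b' + \infty c' = a + b,
\]
the third equality coming from the relation $[\infty] = [\infty + b_1] = [\infty] + [b_1]$ of Definition~\ref{defn.sym}. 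Your reduction steps are otherwise fine: handling $k_i = 0$, the identity $(k_1 a_1)\cdots(k_n a_n) = (k_1\cdots k_n)a$, and $ma = a$ for $m \ge 1$ all match the paper. What should replace the concluding ``$b \le a$'' step is the display above applied to the tuples $(k_1 a_1, \ldots, k_n a_n)$ and $(b_1, \ldots, b_n)$, which satisfy $k_i a_i \ge b_i$ and $k_1 a_1 = \infty$; this shows directly that $(k_1\cdots k_n)a = a$ eats $b$, rather than merely that $b \le a$.
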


\begin{proof}
First, suppose that we have $a_i \ge b_i$ for each $i \le n$.
Set $a' := a_2 \cdots a_n$ and $b' := b_2 \cdots b_n$.
The assumption $a_i \ge b_i$ implies that there exists $c' \in \Sym \Ni$ with $a' = b' + c'$.
(Indeed, for $c_i := a_i - b_i \in \Ni$, we can set $c' = c_2(b_3\cdots b_n) + a_2c_3(b_4\cdots b_n) + \dots + (a_2\cdots a_{n-1})c_n$.)
Now we have
\[
a = \infty \cdot a' = \infty \cdot c' + \infty \cdot b'
= \infty \cdot c' + (\infty + b_1) \cdot b'
= \infty \cdot (c' + b') + b_1 \cdot b'
= a + b
.
\]
The general case now follows by applying the special case several times, as follows.
Let $a'_i$ be an integer multiple of $a_i$ with $a'_i \ge b_i$; then
$a'_1 \cdots a'_n = ka$ for some $k \in \NN_{\ge 0}$. By the previous part,
$\ell a$ eats $a$ for every $\ell \in \NN_{> 0}$, so
$(\ell + 1) a = \ell a$ and hence by induction $\ell a = a$ for all $\ell > 0$.
Moreover, $ka$ eats $b$ (also by the previous part) and hence
\[
a = ka = ka + b = a + b.\qedhere
\]
\end{proof}

With some more work, one can give a complete description of when $a$ eats $b$ for $a, b \in \Sym \Ni$. We postpone this to Corollary~\ref{cor.eat}
(where we will have more tools available).

\subsection{The main result about definable sets}

\begin{defn}\label{defn.semiring}
We write $\Ks$ for the Grothendieck semiring of $Z$-definable sets and
$\Ksb$ for the sub-semiring of bounded $Z$-definable sets. More precisely, $\Ks$ and $\Ksb$ are
generated, as abelian semigroups, by symbols $[X]$, where $X$ runs over the $Z$-definable (bounded, in the case of $\Ksb$) sets.
We have relations $[X_1 \cup X_2] = [X_1] + [X_2]$ if $X_1$ and $X_2$ are disjoint, and $[X_1] = [X_2]$ if there exists a
$Z$-definable bijection $X_1 \to X_2$. Multiplication is defined by $[X_1] \cdot [X_2] := [X_1 \times X_2]$.
\end{defn}

\begin{rem}\label{rem.bij}
The existence of definable Skolem functions implies that in Definition~\ref{defn.semiring}, it doesn't make a difference whether one requires the bijections
to be $Z$-definable or whether they can be definable over arbitrary parameters.
Indeed, if $\phi(\xtup, \ytup, \ctup)$ defines a bijection $X_1 \to X_2$ (for $X_1, X_2$ $Z$-definable and $\ctup$ arbitrary),
then using a definable Skolem function we find, in the ($Z$-definable) set of all parameters $\ctup'$ for which $\phi(\xtup, \ytup, \ctup')$ defines
a bijection $X_1 \to X_2$, one which lies in $\dcl(Z) = Z$.
\end{rem}

The map $\Ni \to \Ks, a \mapsto [[0, a)]$ is a semigroup homomorphism whose restriction to $\NN$ is a semiring homomorphism,
so by the remark after Definition~\ref{defn.sym}, we can define:

\begin{defn}\label{defn:Phi}
Let $\Phi\colon \Sym \Ni \to \Ks$ be the unique semiring homomorphism extending the map
$\Ni \to \Ks, a \mapsto [[0, a)]$.
\end{defn}

We can now precisely state the main result of this paper.

\begin{thm}\label{thm.groth}
The map $\Phi$ defined right above is a semiring isomorphism
$\Phi\colon \Sym \Ni \to \Ks$. Moreover, it
restricts to a semiring isomorphism $\Sym N \to \Ksb$.
\end{thm}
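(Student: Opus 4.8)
The plan is to prove Theorem~\ref{thm.groth} in two halves: surjectivity of $\Phi$ and injectivity of $\Phi$, and then to check that these restrict correctly to the bounded subsemirings. For surjectivity, I would invoke Proposition~\ref{prop.bij-to-cube}, which says every $Z$-definable set is in definable bijection with a finite disjoint union of products of intervals $[0,a_i)$ with $a_i \in \Ni$. Since $[[0,a_1)\times\dots\times[0,a_m)] = \Phi(a_1)\cdots\Phi(a_m)$ and $\Phi$ is a semiring homomorphism, any such class is in the image of $\Phi$; as these classes generate $\Ks$ as a semigroup, $\Phi$ is surjective. The same argument restricted to \emph{bounded} sets — noting that a product of intervals is bounded iff all its factors have $a_i \in N$ — gives surjectivity of $\Sym N \to \Ksb$, \emph{provided} Proposition~\ref{prop.bij-to-cube} is proved in a way that keeps bounded sets bounded (which I would expect, since any definable bijection preserves boundedness up to a bounded error, or more carefully, one checks the normal form is achieved within the bounded world).

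For injectivity, the strategy is to construct enough invariants of $Z$-definable sets to separate distinct elements of $\Sym\Ni$. Concretely, I would use the ``equality test'' in $\Sym\Ni$ (Proposition~\ref{prop.eq.test}) to reduce to showing: whenever $\Phi(x) = \Phi(y)$ for $x,y \in \Sym\Ni$, then $x = y$. The invariants come in two flavours already advertised in the introduction: hyper-cardinality (Definition~\ref{defn.count}), which is a bijection-invariant refining cardinality and which should classify the bounded part completely, and the family of dimension notions (Definitions~\ref{defn.dim.tp}, \ref{defn.dim}, \ref{defn.mdim}), which handle the unbounded directions by recording, for each ``length scale'' of interval, how many factors of that size occur. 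The idea is that an element of $\Sym\Ni$ can be written (via the relations, especially the one from $[0,\infty)\cong[0,\infty)\dcup[0,\infty)$, which collapses infinite lengths) in a canonical form as a sum of monomials in the generators, and each invariant reads off one coordinate of that canonical form from the geometry of $\Phi(x)$. Injectivity of $\Sym N \to \Ksb$ should then be a consequence of the fact (Proposition~\ref{prop.groth-b}) that hyper-cardinality alone is a complete invariant for bounded sets, together with Lemma~\ref{lem.inject} telling us $\Sym N$ already embeds in the relevant ambient object.

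Once both maps $\Phi\colon\Sym\Ni\to\Ks$ and its restriction $\Sym N\to\Ksb$ are bijective, I would close the argument by observing the square
\[
\begin{CD}
\Sym N @>>> \Sym\Ni\\
@VVV @VV{\Phi}V\\
\Ksb @>>> \Ks
\end{CD}
\]
commutes, the horizontal maps are the natural inclusions (injective by Lemma~\ref{lem.inject} on the left, and tautologically on the right), and $\Phi$ maps the generators $a\in N$ of $\Sym N$ to classes of bounded sets $[[0,a)]$; conversely by the surjectivity argument every bounded class is hit by an element of $\Sym N$. Hence $\Phi$ carries $\Sym N$ onto $\Ksb$ bijectively, which is the ``moreover'' clause.

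The main obstacle will be the injectivity half, and within it the construction and well-definedness of the invariants — in particular making hyper-cardinality a genuine bijection invariant and showing the combined invariants detect every monomial in the canonical form of an element of $\Sym\Ni$. Surjectivity is essentially bookkeeping on top of Proposition~\ref{prop.bij-to-cube}, and the bounded refinement mostly requires being careful that no step of either argument leaves the category of bounded sets; but pinning down the canonical form in $\Sym\Ni$ (using the relation $[0,\infty)\cong[0,\infty)\dcup[0,\infty)$ and Lemma~\ref{lem.eat} on what ``eats'' what) and matching it to the geometry is where the real work lies.
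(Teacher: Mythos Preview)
Your overall strategy matches the paper's: surjectivity via Proposition~\ref{prop.bij-to-cube}, the bounded isomorphism $\Sym N\to\Ksb$ via Proposition~\ref{prop.groth-b} (hyper-cardinality as inverse), and injectivity by splitting into bounded and purely unbounded parts and using multidimension together with the eating machinery. One caution: you should not invoke Proposition~\ref{prop.eq.test} here, since it is proved \emph{after} Theorem~\ref{thm.groth} and its ``$\Rightarrow$'' direction already uses $\Phi^{-1}$; what you actually need is only Lemma~\ref{lem.unbound-eat}, which is available beforehand and is what the paper uses directly.

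The paper also does not pass through a ``canonical form'' in $\Sym\Ni$. Given $\Phi(a)=\Phi(a'')=[X]$, it writes $a=a_u+a_b$, $a''=a''_u+a''_b$, realizes these as two partitions $X=X_u\dcup X_b=X''_u\dcup X''_b$, and then constructs an \emph{intermediate} element $a'$ from the common refinement $X'_b:=X_b\cap X''_b$, $X'_u:=X_u\cup X''_u$. The crucial geometric fact is that $X_m:=X_b\cap X'_u$ is bounded (as a subset of $X_b$) yet has $\mdim(X_m)\le\mdim(X_u)$ (as a subset of the purely unbounded $X'_u$), so its hyper-cardinality $a_m$ is eaten by $a_u$; combined with $a_b=a'_b+a_m$ (from the bounded isomorphism), this yields $a=a'$, and symmetrically $a'=a''$. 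Your outline has the right ingredients, but this ``intermediate element via set-theoretic intersection'' is the concrete manoeuvre that makes the bounded and unbounded invariants interact correctly, and it does not obviously fall out of a pure normal-form argument inside $\Sym\Ni$.
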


The semiring $\Ks[\ZZ]$ has already been determined in \cite{Clu.cell}: Definable sets up to bijection are classified by their dimension, and,
if the dimension is $0$, by their cardinality. Thus
$\Ks[\ZZ]$ is indeed isomorphic to $\Sym\NNi = \NN \cup \{\infty, \infty^2, \infty^3, \dots\}$.
A generalisation of this for bounded $Z$-definable sets using what we call hypercardinality is given in Section \ref{sec:hyper}.

Note that the map $\Phi$ is compatible with elementary extensions:
for $Z \prec Z'$ and $N' := \{a \in Z' \mid a\ge 0\}$, we have a commutative diagram
\[
\begin{array}{c@{\,\,}c@{\,\,}c}
\Sym \Ni  & \overset{\Phi}{\longrightarrow} & \Ks\\[.5ex]
\downarrow & & \downarrow \\[.5ex]
\Sym \Ni' & \smash{\overset{\Phi}{\longrightarrow}} & \Ks[Z']
\end{array}
\]
Since the vertical map on the right hand side is injective (by Remark~\ref{rem.bij}), we indirectly
also obtain that the natural map $\Sym \Ni \to \Sym \Ni'$ is injective.

\section{Surjectivity of $\Phi$}
\label{sect.bij-to-cube}

The main goal of this section is to prove that the map $\Phi$ from Definition~\ref{defn:Phi} 
is surjective, as part of Theorem~\ref{thm.groth}. Another way
to express this is that every definable set is in definable bijection to a disjoint union of products of intervals;
this is what Proposition~\ref{prop.bij-to-cube} states.

As before, we work with $\ZZ \prec Z \prec \cZ$, where $\cZ$ is $|Z|^+$-saturated, and definable sets live in $\cZ$.

\subsection{Recall: cell decomposition, rectilinearisation, piecewise linearity}\label{sec:recall}

In this subsection, we recall some results about $\ZZ$-groups, mainly from \cite{Clu.cell}.

\begin{defn}\label{defn.lin}
Given a definable set $X \subseteq \cZ^n$, we call a map $X \to \cZ$ \emph{linear} if it is of the form
\[
(x_1, \dots, x_n) \mapsto \frac1{a}(c + b_1x_1 + \dots + b_n x_n)
\]
for some $a \in \NN \setminus \{0\}$, $b_i \in \ZZ$ and $c \in \cZ$. We call a map $X \to \cZ \cup \{\infty\}$
linear if it is either a linear to $\cZ$ or constant equal to $\infty$.
We call a map $X \to \cZ^k$ linear if each of its components is linear.
\end{defn}

The central tool we use to understand definable sets in $Z$ is the Cell Decomposition Theorem from \cite{Clu.cell}.
Since we will need that result only ``up to linear bijection'' we can significantly simplify the statement
(avoiding to introduce the notion of cells).

\begin{lem}\label{lem.cell}
Every $Z$-definable set $X \subseteq \cZ^n$ can be partitioned into finitely many $Z$-definable pieces
in such a way that for each piece $X_i$, there exists a $Z$-definable linear bijection $X_i \to X'_i$ where
$X'_i$ is of the form
\[
X'_i = \{(y_1, \dots, y_n) \in \cN^n \mid y_1 \le \ell_1, y_2 \le \ell_2(y_1), \dots, y_n \le \ell_n(y_1, \dots, y_{n-1})\}
\]
for some linear $Z$-definable functions $\ell_i$ from the appropriate domain to $\cN \cup \{\infty\}$.
\end{lem}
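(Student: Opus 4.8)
The statement is essentially a reformulation of the Cell Decomposition Theorem of \cite{Clu.cell}, so the plan is to invoke that theorem and then argue that, after a linear change of coordinates, each cell takes the triangular ``staircase'' shape described. I would proceed by induction on $n$. For $n = 1$, cell decomposition in a $\ZZ$-group says that every $Z$-definable subset of $\cZ$ is a finite union of points and intervals (possibly with congruence conditions on the ambient variable, i.e.\ arithmetic progressions), and one must check that each such piece is in $Z$-definable linear bijection with a set $\{y_1 \in \cN \mid y_1 \le \ell_1\}$ for a constant $\ell_1 \in \cN \cup \{\infty\}$: a bounded interval $[\alpha,\beta)$ with congruence condition is handled by the affine map $x \mapsto (x-\alpha)/d$ where $d$ is the modulus, sending it to $[0, (\beta-\alpha)/d)$; an unbounded interval goes to $[0,\infty)$; a point goes to a singleton $\{0\} = \{y_1 \le \ell_1\}$ with $\ell_1 = 1$ (recall intervals are half-open, so $y_1 \le \ell_1$ with the convention — here I should be a little careful and use $y_1 < \ell_1$ or adjust so that $\ell_1 = 1$ gives exactly $\{0\}$; the paper's cells $X_i'$ use $\le$, so a point is $\{y_1 \le 0\}$, hence $\ell_1 = 0$).

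For the inductive step, I would apply the Cell Decomposition Theorem to $X \subseteq \cZ^n$ viewed as fibered over the projection to the first $n-1$ coordinates. This yields finitely many $Z$-definable cells; a cell is given by a $Z$-definable base $B \subseteq \cZ^{n-1}$ together with, for each $\btup \in B$, a condition on the last coordinate $x_n$ of the form $\alpha(\btup) \le x_n < \beta(\btup)$ (or $x_n = \gamma(\btup)$, the degenerate case) with $\alpha, \beta$ linear $Z$-definable functions $B \to \cZ \cup \{\pm\infty\}$ and possibly a congruence condition $x_n \equiv r \bmod d$. The fiberwise affine map $x_n \mapsto (x_n - \alpha(\btup))/d$ (with $d$ the modulus, $d=1$ if there is no congruence condition) is linear in $(\btup, x_n)$ jointly and transforms the cell into $\{(\btup, y_n) : \btup \in B,\ 0 \le y_n \le \ell_n(\btup)\}$ where $\ell_n(\btup) = \lceil (\beta(\btup) - \alpha(\btup))/d \rceil - 1 \in \cN \cup \{\infty\}$ — and this ceiling of a linear function is again linear in the sense of Definition~\ref{defn.lin} once one splits $B$ further according to the residue of the relevant linear form, since over each such piece the ceiling becomes an honest affine function with rational coefficients. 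Then I apply the induction hypothesis to $B \subseteq \cZ^{n-1}$: partition it and linearly rectify each piece to triangular form in coordinates $y_1, \dots, y_{n-1}$; taking products with the $y_n$-coordinate, and noting that composing the $y_n$-normalization (whose coefficients were linear functions of $\btup$, hence linear functions of $y_1, \dots, y_{n-1}$ after rectification) with the base rectification is still linear, gives the desired form for $X$.

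The main obstacle — and the only genuinely non-cosmetic point — is bookkeeping the interaction between the congruence conditions (which force division by integers, hence the $1/a$ in Definition~\ref{defn.lin}) and the need to keep all the bounding functions $\ell_i$ \emph{linear} rather than merely ``piecewise linear with floor/ceiling parts''. The resolution is that every time a ceiling $\lceil \tfrac1a(c + \sum b_i y_i)\rceil$ appears, one refines the partition according to the value of $c + \sum b_i y_i \bmod a$ (a $Z$-definable condition since the $b_i$ are integers and $c$ is a fixed parameter), on each piece of which the ceiling coincides with an affine function over $\QQ$; since only finitely many such refinements occur, the final partition is still finite. I would also remark that the half-open convention and the $\le$ versus $<$ in $X_i'$ need to be matched up consistently, but this only shifts $\ell_i$ by $1$ and costs nothing. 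Everything else — that compositions and products of linear maps are linear, that linear bijections have linear inverses after suitable coordinate permutation — is routine and I would leave it to the reader or cite \cite{Clu.cell} directly.
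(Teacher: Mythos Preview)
Your approach is essentially the paper's own: invoke the Cell Decomposition Theorem of \cite{Clu.cell}, then straighten each cell by scaling away congruence conditions and translating to put the lower endpoints at $0$; the paper compresses all of this into three sentences, while you unpack it into an induction on $n$ with explicit bookkeeping of the residue refinements needed to make ceilings genuinely linear.

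One small oversight: your normalisation $x_n \mapsto (x_n - \alpha(\btup))/d$ presupposes $\alpha$ is finite, but you allowed $\alpha,\beta \in \cZ \cup \{\pm\infty\}$. The paper handles this by first refining the partition so that each coordinate is bounded in at least one direction (splitting a doubly-unbounded fibre at $0$, say), after which one uses either $x_n \mapsto (x_n - \alpha(\btup))/d$ or $x_n \mapsto (\beta(\btup) - 1 - x_n)/d$ depending on which endpoint is finite. You should insert this refinement step before the fibrewise normalisation; otherwise the argument is complete.
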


\begin{proof}
First, we partition $X$ into cells using \cite[Theorem~1]{Clu.cell}. By refining the partition, we may assume
that each coordinate of each cell is bounded in at least one direction.
Then we get rid of the congruence conditions by scaling, and using another linear transformation, we
get to the desired form.
\end{proof}

\begin{rem}\label{rem.recti}
If $X$ is $\emptyset$-definable, one can do even better, namely one can obtain that each $X'_i$ in Lemma~\ref{lem.cell} is of the form
$\cN^k \times \{0\}^{n - k}$. This is \cite[Theorem~2]{Clu.cell} (``rectilinearisation''). Note that the parametric rectilinearisation, \cite[Theorem~3]{Clu.cell}, yields also a bit more for $Z$-definable $X$ than Lemma \ref{lem.cell}, but we will not use this in this paper.
\end{rem}

It follows from Lemma \ref{lem.cell}, applied to the graph of $f$, that any definable function is piecewise linear:

\begin{cor}\label{lem.lin}
For every $Z$-definable function $f\colon \cZ^n \to \cZ \cup \{\infty\}$, there exists a partition of $\cZ^n$ into finitely
many $Z$-definable sets such that for each part $X$, the restriction of $f$ to $X$ is linear.
\end{cor}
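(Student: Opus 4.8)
The plan is to deduce Corollary~\ref{lem.lin} from Lemma~\ref{lem.cell} applied to the graph $\Gamma_f \subseteq \cZ^{n+1}$ of the function $f$, treating separately the locus where $f = \infty$.

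First I would split $\cZ^n$ into the $Z$-definable set $U := \{\xtup \mid f(\xtup) = \infty\}$ and its complement; on $U$ the restriction of $f$ is the constant map $\infty$, which is linear by Definition~\ref{defn.lin}, so it suffices to treat $V := \cZ^n \setminus U$, on which $f$ takes values in $\cZ$. Now I would apply Lemma~\ref{lem.cell} to the graph $\Gamma := \{(\xtup, f(\xtup)) \mid \xtup \in V\} \subseteq \cZ^{n+1}$, obtaining a partition of $\Gamma$ into finitely many $Z$-definable pieces $\Gamma_j$, each of which admits a $Z$-definable linear bijection $g_j \colon \Gamma_j \to \Gamma_j'$ with $\Gamma_j'$ in the triangular normal form of Lemma~\ref{lem.cell}. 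Projecting $\Gamma_j$ to the first $n$ coordinates gives a $Z$-definable partition of $V$ into pieces $V_j$, and since $\Gamma$ is the graph of a function, the projection $\pi\colon \Gamma_j \to V_j$ is a bijection; thus $f|_{V_j} = \mathrm{pr}_{n+1} \circ \pi^{-1}$, where $\mathrm{pr}_{n+1}$ is the $(n+1)$-st coordinate projection restricted to $\Gamma_j$.

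The remaining point is to see that, after possibly refining the partition further, $f|_{V_j}$ is linear in the sense of Definition~\ref{defn.lin}. This does not follow immediately from the linear bijection $g_j$ alone, because a linear bijection need not have a linear inverse in general; the honest way to get linearity of $f|_{V_j}$ is to invoke the linearity of \emph{definable functions on cells} that underlies the Cell Decomposition Theorem of \cite{Clu.cell}, or equivalently to re-run the cell decomposition directly on $\Gamma$: a cell in $\cZ^{n+1}$ whose projection to $\cZ^n$ is injective must, by the definition of a cell, have its last coordinate pinned down by a linear function (or a linear congruence class, which after passing to a finer partition becomes an honest linear function) of the first $n$ coordinates. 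Refining the $V_j$ accordingly so that on each piece the last coordinate of $\Gamma$ is cut out by a single linear equation $x_{n+1} = \frac1a(c + b_1 x_1 + \dots + b_n x_n)$ then exhibits $f|_{V_j}$ as linear.

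The main obstacle is precisely this last step: Lemma~\ref{lem.cell} as stated only records the existence of a linear \emph{bijection} to a triangular region and deliberately suppresses the cell structure, so one has to either go back to \cite[Theorem~1]{Clu.cell} and read off that on a cell the defining function of the top coordinate is linear up to a congruence condition, or argue that a $Z$-definable function whose graph is in linear bijection with such a triangular set must itself be piecewise linear. I would take the former route, citing the cell decomposition of \cite{Clu.cell} directly for the graph $\Gamma$ rather than routing through the simplified Lemma~\ref{lem.cell}; everything else is bookkeeping about projections of graphs and the trivial case $f \equiv \infty$.
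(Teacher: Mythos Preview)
Your proposal is correct and follows the same route as the paper, whose entire proof is the single sentence ``It follows from Lemma~\ref{lem.cell}, applied to the graph of $f$, that any definable function is piecewise linear.'' Your additional care---separating off the locus $f=\infty$ and observing that the simplified Lemma~\ref{lem.cell} only records a linear bijection, so one should really invoke the underlying cell decomposition of \cite[Theorem~1]{Clu.cell} to read off that the last coordinate is linear in the first $n$---is exactly the detail the paper glosses over.
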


\subsection{Recall: dimension}

In $\cZ \succ \ZZ$, there are several different notions of dimension. Later in this paper, we will need a whole range of such notions,
but for the moment, we only need one of them, which has the property that the zero-dimensional sets are exactly the finite ones. There are various ways to define
it, e.g., using that $\acl$ has the exchange property and hence yields a notion of rank; cf.\ \cite[Definition~3]{Clu.cell}.

Instead of giving a precise definition here, we refer to Subsection~\ref{sect.dim}. Readers not familiar with dimension can read that subsection
up to Lemma~\ref{lem.dim-bas}; this is independent of the rest of the paper.

In the following, recall that by a $\wedge$-definable set, we mean an intersection of possibly infinitely many definable sets, but all of which
are definable with parameters from a single small parameter set.

\begin{defn}[{\cite[Definition~3]{Clu.cell}}]
We define the dimension of a $\wedge$-definable set $X$ by $\dim(X) := \dim^{\acl}(X)$, where
$\dim^{\acl}$ is introduced in Definition~\ref{defn.dim.tp}.
\end{defn}

See Remark~\ref{rem.dim-bas} and Lemma~\ref{lem.dim-bas} for some of basic properties of dimension. From those, we now deduce some more specific properties
that are needed below.

\begin{lem}\label{lem.fib1}
Let $f\colon X \to T$ be a $Z$-definable map (with $X$ and $T$ also $Z$-definable). Then the set $T_0 := \{\ttup \in T \mid \dim f^{-1}(\ttup) = \dim X\}$ is finite.
\end{lem}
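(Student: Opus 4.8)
The plan is to reduce to the case where $X$ is in rectilinear form via Lemma~\ref{lem.cell}, and then argue on the "top-dimensional layer" of $X$. Write $d := \dim X$. First I would partition $X$ into finitely many $Z$-definable pieces $X_j$, each carrying a $Z$-definable linear bijection $\sigma_j\colon X_j \to X_j'$ with $X_j'$ of the triangular shape described in Lemma~\ref{lem.cell}. Since there are only finitely many pieces and $\dim f^{-1}(\ttup) = \max_j \dim(f|_{X_j})^{-1}(\ttup)$, it suffices to show that for each fixed $j$ with $\dim X_j = d$ the set $\{\ttup \in T \mid \dim (f|_{X_j})^{-1}(\ttup) = d\}$ is finite (pieces $X_j$ of dimension $<d$ contribute nothing). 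Composing with $\sigma_j$, we may assume $X = X'$ is already in the triangular rectilinear form inside $\cN^n$, with $\dim X = d$; after permuting/relabelling coordinates one may further assume that exactly $d$ of the bounding functions $\ell_i$ are $\equiv \infty$ (the "free" coordinates) — this uses the basic fact from Subsection~\ref{sect.dim} that the dimension of such a set equals the number of coordinates that range over an infinite interval.

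Next I would set up the fibre count. Consider $g := (f, \pi)\colon X \to T \times \cZ^{d}$, where $\pi$ picks out the $d$ free coordinates of $X$. The key claim is: if $\dim f^{-1}(\ttup) = d$, then $g^{-1}(\ttup, \cdot)$ — i.e. the fibre of $f$ over $\ttup$ — must project \emph{dominantly} onto the $d$ free coordinates, in the sense that $\pi(f^{-1}(\ttup))$ has dimension $d$ and hence (being a definable subset of $\cZ^d$ cut out by triangular linear inequalities) contains a box $[0,a)^d$ for suitable $a \in \cN$ depending on $\ttup$. Dually, using Corollary~\ref{lem.lin} (piecewise linearity of $f$), the graph of $f$ is a finite union of sets on which $f$ is linear in the coordinates of $X$; on each such piece the fibre $f^{-1}(\ttup)$ is itself a triangular rectilinear set, and its dimension being $d$ forces all $d$ free coordinates of $X$ to remain free on that fibre. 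That means the linear form defining $f$ on that piece does not involve any of the $d$ free coordinates of $X$ with a nonzero coefficient — so on that piece $f$ factors through the (at most $(n-d)$-many) bounded coordinates.

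Now I would run the counting/pigeonhole argument. Fix a piece $P$ on which $f$ is linear and, by the previous paragraph, depends only on the bounded coordinates $u_1,\dots,u_{n-d}$ of $X$. On $P$, the values of $f$ range over a set that is the image under a linear map of the bounded set $\{(u_1,\dots,u_{n-d})\}$; in particular the set of $\ttup$ attained is "not too large" — one makes this precise by noting that $f$ on $P$ is constant on each set where the bounded coordinates are fixed, and there are, for each value of the $d$ free coordinates, only boundedly many values of the bounded coordinates; since the fibre over $\ttup$ meets $P$ in dimension $d$, $\ttup$ lies in the (finite, by an $\acl$-rank / exchange argument) set of values taken by $f$ on the "bounded skeleton" $\{u : u$ bounded$\} \times \{0\}^d$ of $P$. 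Running this over the finitely many linear pieces $P$ and the finitely many $X_j$ yields that $T_0$ is contained in a finite union of finite sets, hence finite.

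The main obstacle I expect is making rigorous the step "$\dim f^{-1}(\ttup) = d$ forces $f$ to be independent of the free coordinates on the relevant linear piece, so its value over a $d$-dimensional fibre lies in a finite set." This is where one genuinely uses that $X$ is a $\ZZ$-group (not a divisible group): a fibre of full dimension $d$ inside $\cN^d \times (\text{bounded})$ cannot be made large by varying finitely many bounded integer coordinates, so $\ttup$ is pinned down to finitely many possibilities; formalizing this cleanly will likely go through the dimension-theoretic properties (additivity of $\dim$ in fibrations, $\dim$ of a graph, finiteness of the image of a bounded set under a definable map) collected in Subsection~\ref{sect.dim}, together with Corollary~\ref{lem.lin}. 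Everything else — the reductions via Lemma~\ref{lem.cell}, the bookkeeping over finitely many pieces — is routine.
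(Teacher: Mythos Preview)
Your approach is vastly more complicated than necessary and contains a real error in the setup. The paper's proof is three lines: by Lemma~\refi{lem.dim-bas}{def} the set $T_0$ is $Z$-$\wedge$-definable; if it were infinite, saturation of $\cZ$ would give an element of $T_0$ outside $\acl(Z)$, so $\dim T_0 \ge 1$; then the fibration formula Lemma~\refi{lem.dim-bas}{surj} yields $\dim f^{-1}(T_0) = \dim X + \dim T_0 \ge \dim X + 1$, contradicting $f^{-1}(T_0)\subseteq X$. No cell decomposition, no piecewise linearity, no case analysis --- the dimension machinery of Subsection~\ref{sect.dim} does all the work.

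The concrete error in your plan is the claim that, after putting $X$ into the triangular form of Lemma~\ref{lem.cell} and permuting coordinates, ``exactly $d$ of the bounding functions $\ell_i$ are $\equiv\infty$.'' This is false for $Z$-definable sets: the interval $[0,a)$ with $a\in N\setminus\NN$ has dimension $1$ although its bound is not $\infty$, and a triangular set like $\{(y_1,y_2)\in\cN^2 \mid y_2\le y_1\}$ has dimension $2$ with only one $\ell_i=\infty$. You appear to be thinking of $\emptyset$-definable rectilinearisation (Remark~\ref{rem.recti}), which does produce $\cN^k$, but that fails over $Z$. Consequently there is no clean dichotomy into ``free'' and ``bounded'' coordinates, and your later claim that $f$ factors through finitely many bounded coordinates --- hence takes finitely many values on the relevant piece --- does not follow. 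Repairing this would require further partitioning and essentially a hands-on reproof of the additivity in Lemma~\refi{lem.dim-bas}{surj}, which is precisely the lemma that makes the short argument work directly.
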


\begin{proof}
By Lemma~\refi{lem.dim-bas}{def} ($\wedge$-definability of dimension), $T_0$ is $Z$-$\wedge$-definable (so that $\dim T_0$ makes sense).
If $T_0$ is infinite, then by saturatedness of $\cZ$, it contains an element not in
$\acl(Z)$. This implies $\dim T_0 \ge 1$ and hence
$\dim f^{-1}(T_0) = \dim X + 1$ (by Lemma~\refi{lem.dim-bas}{surj}), which is a contradiction.
\end{proof}

\begin{lem}\label{lem.fibp}
Let $(X_{\ttup})_{\ttup \in T}$ and $(Y_{\ttup})_{\ttup \in T}$ be $Z$-definable families of sets (parametrized by the same $Z$-definable set $T$),
let $d \in \NN$ be given,
and suppose that for every $\ttup \in T$ we have $\dim (X_{\ttup} \times Y_{\ttup}) \le d$.
Then there exists a partition of $T$ into finitely many $Z$-definable sets $T_i$
such that $\max\{ \dim X_{\ttup} \mid \ttup \in T_i \} +\max\{ \dim Y_{\ttup} \mid \ttup \in T_i \} \le d$ for every $i$.
\end{lem}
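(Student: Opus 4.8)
The plan is to combine two standard facts about dimension (from Subsection~\ref{sect.dim}) with a short induction. The two facts are: additivity, $\dim(X_{\ttup}\times Y_{\ttup}) = \dim X_{\ttup}+\dim Y_{\ttup}$ (see Lemma~\ref{lem.dim-bas}), and $\wedge$-definability of dimension in families (Lemma~\refi{lem.dim-bas}{def}), which is the ingredient already used in the proof of Lemma~\ref{lem.fib1}. By additivity, the hypothesis becomes simply $\dim X_{\ttup}+\dim Y_{\ttup}\le d$ for all $\ttup\in T$. Set $D_X:=\max_{\ttup\in T}\dim X_{\ttup}$ and $D_Y:=\max_{\ttup\in T}\dim Y_{\ttup}$ (both finite, being bounded by the ambient dimensions), and I will induct on $D_X+D_Y$; with the convention $\dim\emptyset=-\infty$, empty pieces cause no trouble, and I may assume $T\neq\emptyset$.

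If $D_X+D_Y\le d$, the one-piece partition $\{T\}$ already works. Otherwise $D_X,D_Y$ are honest natural numbers and I consider
\[
A:=\{\ttup\in T\mid \dim X_{\ttup}=D_X\},\qquad B:=\{\ttup\in T\mid\dim Y_{\ttup}=D_Y\}.
\]
Since $D_X$, resp.\ $D_Y$, is the maximal value, $A=\{\ttup\mid\dim X_{\ttup}\ge D_X\}$ and $B=\{\ttup\mid\dim Y_{\ttup}\ge D_Y\}$, so by $\wedge$-definability of dimension (applied to the projections onto $T$ of the total spaces of the two families, exactly as in the proof of Lemma~\ref{lem.fib1}) both $A$ and $B$ are $Z$-$\wedge$-definable. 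They are disjoint, since a common point $\ttup$ would satisfy $\dim X_{\ttup}+\dim Y_{\ttup}=D_X+D_Y>d$. As $\cZ$ is saturated and all parameters involved are small and may be taken in $Z$, compactness yields $Z$-definable sets $A'\supseteq A$ and $B'\supseteq B$ with $A'\cap B'=\emptyset$; put $C:=T\setminus(A'\cup B')$, so that $T=A'\dcup B'\dcup C$ is a partition into $Z$-definable sets.

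Now on each piece the quantity $\max\dim X_{\ttup}+\max\dim Y_{\ttup}$ strictly drops: on $A'$ no point lies in $B$, so $\dim Y_{\ttup}\le D_Y-1$ there, whence $\max_{A'}\dim X_{\ttup}+\max_{A'}\dim Y_{\ttup}\le D_X+(D_Y-1)<D_X+D_Y$; symmetrically on $B'$; and on $C$, which meets neither $A$ nor $B$, both maxima drop. Each of the three restricted families $(X_{\ttup})$, $(Y_{\ttup})$ is again $Z$-definable and still satisfies $\dim X_{\ttup}+\dim Y_{\ttup}\le d$, so the induction hypothesis applies to each of them, and the union of the three partitions so obtained is the required finite $Z$-definable partition of $T$.

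The only non-bookkeeping point is the passage from the $Z$-$\wedge$-definable level sets $A$, $B$ to the separating $Z$-definable sets $A'$, $B'$, which is pure compactness; the one thing to watch is keeping the $\wedge$-definitions over $Z$ so that $A'$, $B'$ come out $Z$-definable. (One could also avoid the induction altogether by invoking that the level sets $\{\ttup\mid\dim X_{\ttup}=a\}\cap\{\ttup\mid\dim Y_{\ttup}=b\}$ are outright $Z$-definable --- e.g.\ via cell decomposition, Lemma~\ref{lem.cell} --- and partitioning $T$ into these finitely many sets, on each of which $a+b=\dim(X_{\ttup}\times Y_{\ttup})\le d$; but the argument above uses only the weaker $\wedge$-definability already at hand.)
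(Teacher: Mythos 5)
Your argument is correct, but it takes a noticeably different and more elaborate route than the paper. You reduce via additivity to $\dim X_{\ttup}+\dim Y_{\ttup}\le d$, then set up an induction on the sum of the two global maxima, at each step using saturation to separate the two disjoint $Z$-$\wedge$-definable ``top level sets'' $A$ and $B$ by disjoint $Z$-definable supersets. Each of these steps is sound (the separation is exactly the finite-intersection form of saturation, and your handling of the degenerate cases with $\dim\emptyset=-\infty$ is consistent), so the proof goes through.

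The paper's proof is shorter and avoids both the induction and the explicit separation step: for each $i\le d$ one considers
\[
T'_i:=\{\ttup\in T\mid \dim X_{\ttup}\le i\ \wedge\ \dim Y_{\ttup}\le d-i\},
\]
which by Lemma~\refi{lem.dim-bas}{def} is $Z$-$\vee$-definable (being an intersection of two complements of $\wedge$-definable sets). By additivity these finitely many $T'_i$ cover $T$, and a single application of saturation then produces $Z$-definable $T_i\subseteq T'_i$ still covering $T$; these give the desired partition. So the paper uses the same three ingredients you do --- additivity, $\wedge/\vee$-definability of dimension, and saturation --- but packages them into one covering-by-$\vee$-definable-sets argument rather than a descent. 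Your approach buys a slightly more hands-on picture (you see exactly which pairs $(D_X,D_Y)$ occur and peel them off), at the cost of length; the paper's buys brevity. One small caution about your closing parenthetical: the claim that the level sets $\{\ttup\mid\dim X_{\ttup}=a\}$ are outright $Z$-definable is not established in the paper (only $\wedge$-definability of $\{\dim\ge a\}$ is), and deriving it ``via cell decomposition'' would need an actual argument; your main proof avoids this, which is why it is safe.
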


\begin{proof}
For each $i \le d$, the set
\[
T'_{i} := \{\ttup \in T \mid \dim X_{\ttup} \le i \wedge \dim Y_{\ttup} \le d - i\}
\]
is $Z$-$\vee$-definable by Lemma~\refi{lem.dim-bas}{def}. The union of those sets is all of $T$,
so by compactness (or, more precisely, saturatedness of $\cZ$), there are definable subsets $T_{i} \subseteq T'_{i}$ whose union still is all of $T$.
\end{proof}

\subsection{The world consists of cuboids (up to definable bijection)}

\begin{defn}
A \emph{cuboid} is a set of the form
$[0,a_1) \times \dots \times [0,a_k)$ for some $k \in \NN$ and some $a_1, \dots, a_k \in \cNi$.
\end{defn}

We will often consider ``disjoint unions of cuboids'' as definable sets. As usual, this means that we somehow
embed the cuboids into $\cZ^n$ for some $n$ in a disjoint way. This is harmless, since in reality,
we are interested in such disjoint unions only up to definable bijection.

Note that we consider the empty set as a cuboid. This feels a bit unnatural, but it will come in handy
in Corollary~\ref{cor.bij-to-cube} below.

\private{Namely when considering families: not every fiber consists of the same number of cuboids.}

Here is the main goal of this section:

\begin{prop}\label{prop.bij-to-cube}
Every $Z$-definable set $X$ is in $Z$-definable bijection to a finite disjoint union of $Z$-definable cuboids.
\end{prop}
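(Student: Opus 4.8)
The plan is to induct on the ambient dimension $n$ of $X \subseteq \cZ^n$, using the structural results recalled in Subsection~\ref{sec:recall} to reduce to a controlled situation and then strip off one coordinate at a time. First I would apply Lemma~\ref{lem.cell} to partition $X$ into finitely many $Z$-definable pieces, each of which is in $Z$-definable linear bijection with a set of the ``staircase'' form
\[
X' = \{(y_1,\dots,y_n) \in \cN^n \mid y_1 \le \ell_1,\ y_2 \le \ell_2(y_1),\ \dots,\ y_n \le \ell_n(y_1,\dots,y_{n-1})\},
\]
with each $\ell_i$ linear into $\cN \cup \{\infty\}$. Since a finite disjoint union of finite disjoint unions of cuboids is again one, it suffices to treat a single such $X'$. (Note that whether the $\le$ should really be $<$ is cosmetic: $y_i \le \ell_i$ is the same as $y_i < \ell_i + 1$ when $\ell_i \ne \infty$, and when $\ell_i \equiv \infty$ the two agree; this just shifts the linear functions.)

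Next I would peel off the last coordinate. Consider the projection $\pi\colon X' \to \cZ^{n-1}$ forgetting $y_n$, with image $T$; the fiber over $\ttup = (y_1,\dots,y_{n-1}) \in T$ is the interval $[0, \ell_n(\ttup))$ (after the $\le$-to-$<$ adjustment), whose length is a linear function of $\ttup$, possibly $\infty$. By Corollary~\ref{lem.lin} (piecewise linearity) applied to $\ell_n$, and refining the partition, I may assume $\ell_n$ is genuinely linear on $T$ (or constantly $\infty$). If $\ell_n \equiv \infty$ then $X' = T \times [0,\infty)$, and by the induction hypothesis $T$ is in definable bijection with a finite disjoint union of cuboids $\bigsqcup_j C_j$, so $X' \cong \bigsqcup_j (C_j \times [0,\infty))$, again a disjoint union of cuboids. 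If $\ell_n$ is finite-valued and linear, say $\ell_n(\ttup) = \frac1a(c + \sum b_i y_i)$, the issue is that the fiber length varies with $\ttup$, so $X'$ is not literally a product. Here I would argue that $X'$ is nonetheless in $Z$-definable bijection with a \emph{product} $T \times [0, \ell_n(\ttup_0))$ for a fixed base point, provided $T$ has a suitable shape — but more robustly, I would first apply the induction hypothesis to $T$ to replace it by $\bigsqcup_j C_j$ with each $C_j$ a cuboid, transport the family $(\ell_n(\ttup))_\ttup$ along the bijection (it remains piecewise linear by Corollary~\ref{lem.lin}), refine so that on each piece the transported length function is linear on a cuboid $[0,a_1)\times\dots\times[0,a_k)$, and then use an explicit linear ``unfolding'' bijection: a region of the form $\{(\ztup, w) : \ztup \in [0,a_1)\times\dots\times[0,a_k),\ 0 \le w < \lambda(\ztup)\}$ with $\lambda$ linear and non-negative on the cuboid can be cut along the coordinate hyperplanes where $\lambda$ is increasing vs.\ decreasing and reassembled (by reflections $z_i \mapsto a_i - 1 - z_i$ and shears) into a genuine cuboid or finite union thereof; this is where the bulk of the elementary but fiddly case analysis sits.

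The main obstacle, as just indicated, is the last step: turning a ``slanted box'' (a cuboid base with a linearly-varying height) into an honest disjoint union of cuboids by a definable bijection. One clean way to organize this is to induct on $k$ (the number of cuboid factors in the base) as well: fix all coordinates but one, $z_1$ say, so that the height $\lambda$ becomes an affine function of $z_1$ alone over $[0,a_1)$; the region $\{(z_1,w): 0\le z_1 < a_1,\ 0\le w < \lambda(z_1)\}$ is a discrete ``trapezoid'', and a discrete trapezoid is in linear bijection with a rectangle $[0,a_1)\times[0,h)$ for suitable $h$ (or, if $\lambda$ can be $\infty$ on part of the range, with a rectangle times $[0,\infty)$ plus a bounded trapezoid) — essentially the two-dimensional base case, proved by the standard ``fold the triangle'' bijection. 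Feeding this back in, uniformly over the remaining $z_2,\dots,z_k$, and again refining the partition so everything stays linear, completes the induction. I expect the only real care needed beyond bookkeeping is (i) keeping track of the $\infty$-valued case throughout (an interval $[0,\infty)$ factor, once present, is stable under all the manipulations, which is why cuboids are allowed to have $\infty$ entries), and (ii) ensuring at each refinement that the newly-introduced linear functions are non-negative on the relevant piece, which is automatic since they describe fiber lengths of non-empty fibers. Allowing $\emptyset$ as a cuboid makes the edge cases (empty fibers, degenerate pieces) harmless.
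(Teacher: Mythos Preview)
There is a genuine gap in your reduction to the ``slanted box over a cuboid'' case. After you apply the inductive hypothesis on $n$ to the base $T$ and obtain a definable bijection $\phi\colon T \to \bigsqcup_j C_j$, you transport the (linear) height $\ell_n$ along $\phi$ and note that $\tilde\ell_n := \ell_n \circ \phi^{-1}$ is piecewise linear. You then write ``refine so that on each piece the transported length function is linear on a cuboid''. That refinement is not available: $\phi$ is only piecewise linear, and the pieces on which $\tilde\ell_n$ is linear are arbitrary definable subsets of the $C_j$, not sub-cuboids. For instance, if on $[0,a)^2$ the transported function equals $z_1$ on $\{z_1 \ge z_2\}$ and $z_2$ on $\{z_1 < z_2\}$, no finite partition of the square into cuboids makes it linear on each piece. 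Applying the inductive hypothesis again to those non-cuboid pieces only reproduces the same situation, so the procedure does not terminate. (Your inner induction on $k$ for a slanted box over an actual cuboid is fine and does reduce to the two-dimensional triangle; the problem is that you never legitimately reach that situation.)

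The paper sidesteps this by organising the induction differently. The primary induction is on $d = \dim X$ (the matroid dimension), with a secondary induction on $n$; and one projects to the \emph{first} coordinate only, so the base is one-dimensional. By Lemma~\ref{lem.fib1} only finitely many fibers have full dimension $d$ (handled by the secondary induction on $n$), and on the rest one invokes the family version of the induction hypothesis in dimension $<d$ to make every fiber a cuboid. Because the base is one-dimensional, cell decomposition turns it into a disjoint union of intervals $[0,s)$, and the fiber side-lengths stay genuinely linear on each interval---no transport through an unknown bijection is needed. One is then left with a very explicit set of the shape $\{\xtup \in [0,s)^d : x_2,\dots,x_{r+1} < x_1 < x_{r+2},\dots,x_d\}$, which the paper decomposes into simplices and reassembles into cubes via a $d!$-divisibility trick (cutting $[0,s)$ into $d!$ equal pieces and counting the resulting small simplices by a volume comparison), with all overlaps and remainders absorbed by the induction on $d$. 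Your ``fold the triangle'' is essentially the $d=2$ instance of this last step, and already there a single fold does not suffice; a parity split (equivalently, division by $2! = 2$) is needed.
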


Using a standard compactness argument, we also obtain a family version of this result, namely:

\begin{cor}\label{cor.bij-to-cube}
Let $Y$ be a $Z$-definable set and let $(X_{\ytup})_{\ytup \in Y}$ be a $Z$-definable family of sets parametrized by
$Y$. Then there exists $Z$-definable family of bijections $X_{\ytup} \to C_{\ytup}$, where
$(C_{\ytup})_{\ytup \in Y}$ is a finite disjoint union of $Z$-definable families of cuboids.
\end{cor}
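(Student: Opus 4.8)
The plan is to deduce Corollary~\ref{cor.bij-to-cube} from Proposition~\ref{prop.bij-to-cube} by a routine compactness (saturation) argument, applied to the individual fibers of the family $(X_{\ytup})_{\ytup \in Y}$.

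First I would set up the statement one wants to prove by compactness. For a single parameter $\ytup \in Y$, Proposition~\ref{prop.bij-to-cube} (applied over the parameter set $Z \cup \{\ytup\}$, which by Lemma~\ref{lem.dcl} we may take to be an elementary substructure) tells us that $X_{\ytup}$ is in $(Z\cup\{\ytup\})$-definable bijection to a finite disjoint union of cuboids with parameters in $Z \cup \{\ytup\}$. Moreover one can bound, uniformly in $\ytup$, both the number of cuboids in such a decomposition and the ambient dimension $n$ into which they are embedded: these bounds depend only on the formula defining the family (this is visible from the proof of Proposition~\ref{prop.bij-to-cube}, which goes through cell decomposition, or can be extracted by a separate compactness step). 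So fix $n$ and $m$ such that for every $\ytup \in Y$ there is a bijection $X_{\ytup} \to \coprod_{j<m} C_{\ytup,j}$ where each $C_{\ytup,j} \subseteq \cZ^n$ is either a cuboid or empty, the whole thing definable with parameters $\ytup$ over $Z$.

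Next I would phrase this uniformly. Write down a single $L(Z)$-formula $\Psi(\ztup)$ with free variable ranging over a tuple $\ztup$ of parameters, expressing: "the formulas indexed by $\ztup$ define, for parameter $\ytup$, a cuboid-or-empty set $C_{\ytup,j}$ for each $j<m$, the $C_{\ytup,j}$ are pairwise disjoint, and the formula indexed by $\ztup$ defines a bijection from $X_{\ytup}$ onto their union." One has to be slightly careful that "is a cuboid" is expressible: a subset of $\cZ^n$ is a cuboid iff it equals $[0,a_1)\times\dots\times[0,a_n)$ for some $a_i \in \cNi$, and "$[0,a)$" with $a=\infty$ is just $[0,\infty)=\cN$, so this is a finite disjunction over which coordinates are unbounded, each disjunct being a first-order condition on the $a_i$'s — hence $\Psi$ is a genuine $L(Z)$-formula (with a fixed finite number of formula-schemas plugged in, which is legitimate since everything is $L$ with parameters, and $L$ has quantifier elimination in Presburger form). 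By the previous paragraph, for each $\ytup \in Y$ there exists a parameter tuple $\ctup_{\ytup}$ making $\Psi$ hold. Now apply a definable Skolem function (these exist in $\ZZ$-groups, as used in Remark~\ref{rem.bij}): the set $\{(\ytup,\ztup) \mid \ytup \in Y, \Psi \text{ holds for }(\ytup,\ztup)\}$ is $Z$-definable and surjects onto $Y$, so it admits a $Z$-definable section $\ytup \mapsto \ctup_{\ytup}$. This section gives the desired uniformly $Z$-definable family of bijections $X_{\ytup} \to C_{\ytup} := \coprod_{j<m} C_{\ytup,j}$, and $(C_{\ytup})_{\ytup}$ is by construction a finite disjoint union of $Z$-definable families of cuboids (some fibers possibly empty, which is why empty cuboids were allowed).

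The main obstacle is obtaining the \emph{uniform} bounds on $m$ and $n$: a priori Proposition~\ref{prop.bij-to-cube} gives, for each $\ytup$ separately, a decomposition whose size could depend on $\ytup$. I would handle this by one more compactness argument — consider, for each pair $(m,n)$, the $Z$-$\vee$-definable set of $\ytup$ for which a decomposition of size $\le m$ in $\cZ^n$ exists (this is $\vee$-definable because one existentially quantifies over the bounded-length parameter tuple); Proposition~\ref{prop.bij-to-cube} says the union over all $(m,n)$ of these sets is all of $Y$, so by saturatedness of $\cZ$ finitely many of them cover $Y$, and then we may take $m,n$ to be the maxima. Once uniformity is in hand, the rest is the standard Skolem-function packaging described above, and nothing further is required.
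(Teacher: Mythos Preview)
Your overall strategy --- apply Proposition~\ref{prop.bij-to-cube} fiberwise and then use saturation to uniformise --- is exactly the paper's approach. However, your execution has a gap, and it stems from misidentifying the main obstacle.

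You say the obstacle is bounding $m$ (the number of cuboids) and $n$ (the ambient dimension), and once those are fixed you write down a single $L(Z)$-formula $\Psi(\ztup)$. But even with $m$ and $n$ fixed, the \emph{formula template} defining the bijection $X_{\ytup} \to C_{\ytup}$ can still vary with $\ytup$: a definable bijection is piecewise linear, but the number of linear pieces is not bounded by $m$ and $n$ alone, so there is no single template into which one just plugs parameters $\ztup$. Your remark about quantifier elimination does not resolve this --- quantifier elimination constrains the shape of a single formula, not the range of formulas that could arise across different $\ytup$. Likewise, in your last paragraph, ``existentially quantifying over the bounded-length parameter tuple'' only gives a definable set once the template is fixed; the $\vee$-definability you need really comes from a disjunction over the (countably many) possible templates, which you never take.

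The paper runs compactness directly on the formulas and thereby handles everything in one step. For each $\btup \in Y$, Proposition~\ref{prop.bij-to-cube} (applied over $\dcl(Z\cup\btup)\prec\cZ$) yields a specific $L(Z)$-formula $\phi_{\btup}(\xtup, \xtup', \ytup)$ defining, at $\ytup = \btup$, a bijection to a disjoint union of $\ell_{\btup}$ cuboids; the statement ``$\phi_{\btup}(\xtup,\xtup',\ytup)$ defines a bijection from $X_{\ytup}$ to a disjoint union of $\ell_{\btup}$ cuboids'' is itself an $L(Z)$-formula $\psi_{\btup}(\ytup)$. Saturation of $\cZ$, applied to the unrealized partial type $\{\neg\psi_{\btup}(\ytup) \mid \btup \in Y\}$, gives finitely many $\btup_1, \dots, \btup_r$ with $Y \subseteq \bigcup_i \psi_{\btup_i}(\cZ)$. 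This simultaneously bounds the number of cuboids, the ambient dimension, and --- the point you are missing --- the finite list of bijection templates $\phi_{\btup_1},\dots,\phi_{\btup_r}$. The family is then assembled from these (by partitioning $Y$, or equivalently via a Skolem function as you propose).
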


Note that this does not only state that each $C_{\ytup}$ is a finite union of cuboids, but also that the number
of cuboids does not depend on $\ytup$. (This is where we need the empty set to be considered as a cuboid.)

\begin{proof}[Proof of Corollary~\ref{cor.bij-to-cube}]
Fix $\btup \in Y$ and set $Z' := \dcl(Z \cup \btup)$. By Lemma~\ref{lem.dcl}, we have $Z' \prec \cZ$, so
we can apply Proposition~\ref{prop.bij-to-cube} to obtain that there exists a $Z'$-definable bijection from $X_{\btup}$ to a disjoint union of $\ell_{\btup}$ $Z'$-definable cuboids.
Let $\phi_{\btup}(\xtup, \xtup', \btup) \in L(Z')$ define such a bijection (where parameters from $Z$ are omitted from the notation)
and let $\psi_{\btup}(\ytup) \in L(Z)$ state that
$\phi_{\btup}(\xtup, \xtup', \ytup)$ defines a bijection from $X_{\ytup}$ to a disjoint union of $\ell_{\btup}$ cuboids.

Since the (partial) type $\{\neg \psi_{\btup}(\ytup) \mid \btup \in Y\}$ is not realized in $Y$, by saturatedness of $\cZ$ it is inconsistent,
i.e., there exist finitely many $\btup_1, \dots \btup_m$ such that $\ytup \in Y$ implies $\bigvee_{i=1}^m\psi_{\btup_i}(\ytup)$.
This means that our desired family of bijections can be constructed using $\phi_{\btup_1},\dots \phi_{\btup_m}$.
\end{proof}

This corollary will be used inductively in the proof of Proposition~\ref{prop.bij-to-cube}.
For the induction to work, note that if we only know the proposition for definable sets $X$ of dimension at most $d$, then we still can deduce the corollary for families $(X_{\ytup})_{\ytup}$ whose fibers are all of dimension at most $d$.

\begin{proof}[Proof of Proposition~\ref{prop.bij-to-cube}]

If we have a partition of $X$ into finitely many definable sets, we may
treat each part separately; we will do this
several times during the proof.

We suppose $X \subseteq \cZ^n$ and we do a double induction: a main induction over $d := \dim X$, and for fixed $d$, an additional induction over $n$. In other words, we assume that
the statement is true for all sets $X'$ with $\dim X' < d$ and moreover for all sets with $\dim X' = d$ and $X' \subseteq \cZ^{n - 1}$.

Let $T$ be the projection of $X$ to the first coordinate and for $t \in T$ set $X_{t} = \{\xtup \in \cZ^{n-1} \mid (t, \xtup) \in X\}$.
By Lemma~\ref{lem.fib1}, there are only finitely many $t \in T$ for which $X_{t}$ has dimension $d$. For each of those $t$, we treat $X_{t}$ separately
by projecting it to the last $n-1$ coordinates and using induction on $n$ (and replacing $Z$ by $\dcl(Z, t) \prec \cZ$).
In this way, we may assume that $\dim X_{t} < d$ for every $t$.
In particular, by induction over $d$, using Corollary~\ref{cor.bij-to-cube} and after an additional partition of $X$,
we may assume that for each $t$, $X_{t}$ is a cuboid, i.e.,
$X_{t} = [0,\ell_1(t)) \times \dots \times [0,\ell_{k}(t))$ for some definable functions $\ell_i\colon T \to \cNi$.

We can also assume that $k = d - 1$. Indeed, using Lemma~\ref{lem.fibp} repeatedly, we may assume (after partitioning $T$ and partitioning $X$ accordingly)
that $\dim [0, \ell_i(t))$ is constant on $T$ for each $i$.
If $\dim [0, \ell_i(t)) = 0$ for every $t$, then
$\ell_i$ is bounded by a natural number and we can get rid of the factor $[0, \ell_i(t))$ by partitioning $X$ once more (according to the $i$-th coordinate).
Thus without loss $\dim [0, \ell_i(t)) = 1$ for all $i$ and hence $k = d - 1$. (We cannot have $k < d - 1$, since this would imply $\dim X < d$.)

By further partitioning $T$ (using Corolary~\ref{lem.lin}), we may assume that each $\ell_i$ is linear.
Moreover, using Lemma~\ref{lem.cell}, we can assume that $T$ is of the form $[0, s)$ for some $s \in N$.

We now describe the maps $\ell_i$ more precisely, so fix $i \in \{1, \dots, d - 1\}$.
Suppose first that $\ell_i(t)$ does not depend on $t$ (which is the case in particular if $\ell_i(t) = \infty$).
Then $X$ can be written as a product $[0, \ell_i(0)) \times X'$, with $X'$ being the projection of $X$ to all coordinates except the $i$-th one, and we are done by applying induction to $X'$. So from now on we suppose that $\ell_i$ is non-constant, i.e., that it is of the form $\ell_i(t) = \frac{1}{a_i}(c_i + b_it)$ for some integers $a_i > 0$ and $b_i \ne 0$ and some $c_i \in Z$.

Using
that $[0,s)$ is an infinite interval (otherwise we would have $\dim X = d - 1$),
we first deduce that $a_i$ divides $b_i$ and then that it divides $c_i$. In other words, without loss $a_i = 1$. Next, we do a case distinction on the sign of $b_i$.

Suppose first that $b_i > 0$. Then the interval $[0, c_i + b_it)$
is in definable bijection to the disjoint union of $[0, c_i)$ and $b_i$ copies of $[0,t)$
(note that $c_i \ge 0$ since $c_i + b_it \ge 0$ for $t = 0$).
We decompose $X$ according to this interval decomposition. The part corresponding to
$[0, c_i)$ is independent of $t$ so we already saw how to treat it and we are left with the case $\ell_i(t) = t$.

Now suppose that $b_i < 0$. Note that this cannot happen if $s = \infty$ since then $\ell_i(t)$ would be negative for $t$ sufficiently big.
This means that we can apply the same argument as for $b_i >0$, but with $t$ replaced by
$s - 1 - t$ (and using that $c_i + b_it \ge 0$ for $t = s - 1$) to reduce to
the case where $\ell_i(t) = s - 1 - t$.

To summarise, after a permutation of coordinates, we may assume that
$X_{t} = [0,t)^{r} \times [0, s - 1 - t)^{r'}$ for some $r,r' \in \NN$ satisfying $r + r' = d-1$.
Using another definable bijection (a translation of the last $r'$ coordinates), we change this to
$X_{t} = [0,t)^{r} \times [t+1, s)^{r'}$.

Now we rename $t =: x_1$ and denote the remaining coordinates by
$x_2, \dots, x_{d}$; in this notation, we have
\begin{align*}
\tag{*}
X = \{(x_1, \dots, x_{d}) \in [0, s)^d \mid
\,&x_2 < x_1, \dots, x_{r+1} < x_1, \\
&x_1 < x_{r+2}, \dots x_1 < x_{d}
\};
\end{align*}
it remains to show that a set of this form is in definable bijection to a finite union of cuboids.

We first treat the case $s = \infty$. In that case, we have $r = d - 1$ (since this implies that in the case distinction on the sign of $b_i$, $b_i < 0$ does not occur), and the map $(x_1, \dots, x_{d}) \mapsto (x_1 - \max\{x_2, \dots, x_{d}\} - 1, x_2, x_3, \dots, x_{d})$ sends
$X$ bijectively to the cuboid $[0, \infty)^d$. So now suppose $s < \infty$.
The remainder of the proof consist in cutting this (rather specific) set $X$ into pieces
and reassembling them differently to get cuboids.

For any permutation $\sigma \in S_{d}$, define the following set:
\[
X_{\sigma} := \{(x_1, \dots, x_{d}) \in [0, s)^d \mid x_{\sigma(1)} < \dots < x_{\sigma(d)}\}
.
\]
Moreover, set
\[
X_{0} := \{(x_1, \dots, x_{d}) \in X \mid \exists i, j \colon (i \ne j \wedge x_i = x_j) \}
.
\]
Our set $X$ is the disjoint union of a subset of $X_0$ and some of the sets $X_{\sigma}$ (namely those
$X_\sigma$ for which with $\sigma(1) > \sigma(i)$ for $i = 2, \dots, r+1$ and
$\sigma(1) < \sigma(i)$ for $i = r+2, \dots, d$).
The set $X_{0}$ has lower dimension, so we can treat $X\cap X_0$ by induction, and
we are left with sets of the form $X_{\sigma}$. In other words and after permuting
coordinates once again, we can assume that
\[
\tag{**}
X = \{(x_1, \dots, x_{d}) \in [0, s)^d \mid x_{1} < \dots < x_{d}\}
.
\]

The idea of the remainder of the proof is to cut this ``pyramid'' into many smaller
pyramids which then can be glued together to finitely many cuboids. 
To this end
we may without loss assume that $s$ is divisible by $d!$; indeed,
we can simply replace $s$ by $d!\cdot \lfloor \frac{s}{d!} \rfloor$
and treat the remainder, which has lower dimension, by induction. Set
$s' := \frac{s}{d!}$.
Now we further decompose $X$ as follows:
For $\atup \in [0, d!)^d$, define
\[
X_{\atup} := \{(x_1, \dots, x_{d}) \in [0, s') \mid
\underbrace{(x_1 + a_1s', \dots, x_{d} + a_{d}s') \in X}_{(\Delta)}
\}
.
\]
We have a definable bijection between $X$ and the disjoint union of all sets
$X_{\atup}$, where $(x_1,\dots, x_{d}) \in X_{\atup}$ is sent to
$(x_1 + a_1s', \dots, x_{d} + a_{d}s') \in X$.

Now fix $\atup\in [0, d!)^d$ and consider the set $X_{\atup}$. The condition $(\Delta)$ is the
conjunction over all $i = 1,\dots, d-1$ of the inequalities $x_i + a_is' < x_{i+1} + a_{i+1}s'$. If $a_i < a_{i+1}$, then this inequality is always true
(since $x_i, x_{i+1} \in [0, \dots, s')$); if $a_i > a_{i+1}$, then this
inequality is never true (and $X_{\atup}$ is empty); if $a_i = a_{i+1}$ then the inequality
is equivalent to $x_i < x_{i+1}$. In other words, each set $X_{\atup}$
is given by inequalities between some of the coordinates.

Now we further decompose each $X_{\atup}$ in the same way as we decomposed (*):
We treat the subset of those points where some coordinates are equal using induction,
and we decompose the remainder into subsets which all are, up to permutation of coordinates, of the form
\[
X' = \{(x_1, \dots, x_{d}) \in [0, s')^d \mid x_{1} < \dots < x_{d}\}
.
\]
Thus, to summarize, our definable set is now a disjoint union of a certain number of copies of $X'$.
An easy way to determine the number $u$ of copies we obtained from our original set (**) is to do an analoguous
decomposition in $\RR^d$ and to compare the volumes: if $Z := \{(z_1, \dots, z_{d}) \in \RR^d
\mid 0 \le z_1\le \dots \le z_{d} \le 1\}$ and $Z' := \{(z_1, \dots, z_{d}) \in \RR^d
\mid 0 \le z_1\le\dots \le z_{d} \le \frac1{d!}\}$, then $u = \Vol(Z)/\Vol(Z') = d!^d$;
in particular, $u$ is divisible by $d!$ (which is all we need to finish the proof).

Now the last step is to group $d!$ of these sets $X'$ into a cube (which in total
yields $d!^{d-1}$ cubes); the only technical difficulty here is to get back points where
not all coordinates are different; we achieve this by making the cubes slightly smaller.
More precisely, translate $X'$ by $(0, -1, \dots, -(d - 1))$, i.e., without loss
\[
X' = \{(x_1, \dots, x_{d}) \mid 0 \le x_{1} \le \dots \le x_{d} < s' - d + 1\}
.
\]
Next, we apply a different permutation of coordinates $\sigma \in S_{d}$ to each of our $d!$ many
sets $X'$; this yields sets
\[
X'_{\sigma} = \{(x_1, \dots, x_{d}) \mid 0 \le x_{\sigma(1)} \le \dots \le x_{\sigma(d)} < s' - d + 1\}
.
\]
The cube $C := [0, s' - d + 1)^d$ is the union of all sets $X'_{\sigma}$,
so to finish the proof, it remains to check that the intersection of any two sets
$X'_{\sigma_1}$ and $X'_{\sigma_2}$ (for $\sigma_1 \ne \sigma_2$) has
dimension less than $d$. (Then one copy of the intersection can be treated by induction.)

To see that $\dim (X'_{\sigma_1} \cap X'_{\sigma_2}) < d$, note that
there exist $i, i' \le d$, $i \ne i'$ such that any $\xtup \in X'_{\sigma_1}$
satisfies $x_i \le x_{i'}$, whereas any $\xtup \in X'_{\sigma_2}$
satisfies $x_i \ge x_{i'}$; hence any $\xtup$ in the intersection satisfies
$x_i = x_{i'}$.

\end{proof}

\section{Injectivity of $\Phi$}
\label{sect.inj}

\subsection{An invariant: hyper-cardinality}\label{sec:hyper}

To prove injectivity of the map $\Phi\colon \Sym \Ni \to \Ks$ from Theorem~\ref{thm.groth}, we need invariants of definable sets.
The first one we introduce is a generalization of cardinality; it will turn out to be already a full invariant for bounded definable sets.
To be able to define it, we need some preliminary lemmas.

In the following, we evaluate polynomials with coefficients in $\QQ$ at elements of $\cZ$,
by embedding both $\QQ$ and $\cZ$ into the ring $\Sym \cQ = \Sym \cZ \otimes_{\ZZ} \QQ$. (In particular, the result of such an evaluation lies in $\Sym \cQ$.)

\begin{lem}\label{lem.vanish}
Suppose that $Y \subseteq \cZ^k$ is a $Z$-definable set and that $f \in \Sym \cQ[y_1, \dots, y_k]$
is a polynomial which vanishes on $Y \cap Z^k$. Then $f$ vanishes on all of $Y$.
\end{lem}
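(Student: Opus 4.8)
The plan is to prove the statement by induction on $k$ and to use the fact — coming from quantifier elimination in Presburger arithmetic, cf.\ \cite{Clu.cell} or \cite{Mar.modTh} — that $Z$-definable subsets of $\cZ$ are finite unions of points and of arithmetic progressions on intervals, so in particular a $Z$-definable infinite subset of $\cZ$ meets $Z$ in an infinite (indeed unbounded) set. Equivalently: an infinite $Z$-definable $Y \subseteq \cZ$ contains $Z$-definable elements with arbitrarily large absolute value, because $Z \prec \cZ$ and $Z$ already contains witnesses to the first-order statement ``there is an element of $Y$ of absolute value $> n$'' for every $n \in \NN$, hence also for every parameter one is allowed to use.

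For the base case $k = 1$: if $Y$ is finite, then $Y \subseteq \acl(Z) = \dcl(Z) = Z$ (by Lemma~\ref{lem.dcl} and quantifier elimination, finitely many conjugates over $Z$ forces all of them into $Z$, as $Z$ is an elementary substructure), so $Y \cap Z = Y$ and there is nothing to prove. If $Y$ is infinite, then $f$ vanishes on the infinite subset $Y \cap Z$ of $\cZ \subseteq \Sym \cQ$; since $\Sym \cQ$ is an integral domain (Lemma~\ref{lem.SQ}, applied to $G = \cQ$, which is torsion-free), a one-variable polynomial over it with infinitely many roots is the zero polynomial, so $f$ vanishes on all of $\cZ \supseteq Y$. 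For the induction step, write $f = \sum_{j=0}^{m} f_j(y_1, \dots, y_{k-1})\, y_k^{\,j}$ with $f_j \in \Sym\cQ[y_1,\dots,y_{k-1}]$. Fix $\atup = (a_1,\dots,a_{k-1}) \in \cZ^{k-1}$ and look at the fiber $Y_{\atup} = \{ b \in \cZ \mid (\atup, b) \in Y\}$, which is definable over $\dcl(Z \cup \atup)$, an elementary substructure of $\cZ$ by Lemma~\ref{lem.dcl}. The key point I would establish is that for those $\atup$ for which $Y_{\atup}$ is infinite, the polynomial $f(\atup, y_k)$ vanishes on $Y_{\atup}$, hence is identically zero, hence $f_j(\atup) = 0$ for all $j$; and for the (by Lemma~\ref{lem.fib1}, after noting $\dim Y_{\atup} \le 1$, one would rather argue directly) $\atup$ with $Y_{\atup}$ finite, one handles things separately.

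More carefully, here is the cleaner route I would actually write. First reduce to the case where all fibers $Y_{\atup}$ are infinite or all are finite, by partitioning: the set $T_\infty := \{\atup \mid Y_{\atup} \text{ infinite}\}$ and its complement are both $Z$-definable (being infinite is a definable condition on the parameter in this theory — it is equivalent to containing an interval of length $> n$ for all $n$, but one must be careful; instead use that $Y_{\atup}$ is infinite iff it contains two elements $b<b'$ with $b'-b > N$ for every $N$, and finiteness is the negation — or simply invoke that definable sets in one variable are boolean combinations of congruences and intervals, so the ``number of unbounded branches'' is definable in the parameter). On the part of $Y$ over $T_\infty$: for each $\atup \in T_\infty$, $Y_{\atup}$ meets $\dcl(Z\cup\atup) \supseteq Z$-rational points — wait, that is not quite what I want; I want the statement for the whole polynomial. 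Instead: apply the $k=1$ case with the parameter set $\dcl(Z\cup\atup)$ to conclude, for each fixed $\atup \in T_\infty \cap Z^{k-1}$, that $f(\atup, y_k)$ vanishes on all of $Y_{\atup}$; but I need this for $\atup$ ranging over all of $T_\infty$, which is exactly the inductive statement with $f_j$ in place of $f$ once I know $f(\atup, y_k) \equiv 0$ as a polynomial in $y_k$. So: the condition ``$f(\atup,y_k)$ is the zero polynomial in $y_k$'' is ``$f_j(\atup) = 0$ for all $j$'', and the hypothesis ``$f$ vanishes on $Y \cap Z^k$'' gives, for each $\atup \in Z^{k-1}$ with $Y_{\atup}$ infinite, that $f(\atup, \cdot)$ has infinitely many $Z$-rational roots in $Y_{\atup} \cap Z$, hence (integral domain, one variable) $f_j(\atup) = 0$ for all $j$. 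Thus each $f_j$ vanishes on $(T_\infty \cap Z^{k-1})$, and by induction each $f_j$ vanishes on $T_\infty$, so $f$ vanishes on $Y$ over $T_\infty$. On the part over the complement $T_{\mathrm{fin}} = T \setminus T_\infty$, the fibers $Y_{\atup}$ are finite; after a further $Z$-definable partition we may assume $|Y_{\atup}| = c$ is constant and, using definable Skolem functions, write $Y = \bigcup_{i=1}^{c} \Gamma(g_i)$ as a union of graphs of $Z$-definable functions $g_i \colon T_{\mathrm{fin}} \to \cZ$. Then ``$f$ vanishes on $Y$'' becomes ``the polynomials $h_i(\atup) := f(\atup, g_i(\atup))$ vanish on $T_{\mathrm{fin}}$''; but $g_i$ is piecewise linear (Corollary~\ref{lem.lin}), so after another partition $g_i$ is of the form $\atup \mapsto \frac1a(c_0 + \sum b_\nu a_\nu)$ with $c_0 \in \cZ$, and then $h_i$ is again a polynomial in $\Sym\cQ[y_1,\dots,y_{k-1}]$ vanishing on $T_{\mathrm{fin}} \cap Z^{k-1}$ (since $g_i$ maps $Z$-points to $Z$-points), so by induction $h_i$ vanishes on all of $T_{\mathrm{fin}}$, i.e.\ $f$ vanishes on all of $Y$.

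I expect the main obstacle to be the bookkeeping in the finite-fiber case — specifically, making sure that after using definable Skolem functions and piecewise linearity one genuinely lands back in the inductive hypothesis with a polynomial over $\Sym\cQ$ in one fewer variable (the substitution $y_k \mapsto \frac1a(c_0 + \sum b_\nu y_\nu)$ composed with $f$ does stay polynomial because $a$ is inverted in $\Sym\cQ$ and $c_0 \in \cZ \subseteq \Sym\cQ$). A secondary subtlety worth stating explicitly is the definability of $T_\infty$ in the parameter $\atup$; I would justify it by the description of one-variable $Z$-definable sets as finite boolean combinations of congruence classes and intervals, which makes ``$Y_{\atup}$ is infinite'' a first-order condition on $\atup$, so both $T_\infty$ and $T_{\mathrm{fin}}$ are $Z$-definable and the partition is legitimate. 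Everything else — the integral-domain argument in one variable, and the reduction $\acl(Z) = Z$ for finite definable sets — is immediate from the lemmas already in the paper.
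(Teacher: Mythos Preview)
Your inductive strategy has a genuine gap: the partition of the base into $T_\infty$ (infinite fiber) and $T_{\mathrm{fin}}$ (finite fiber) is in general \emph{not} $Z$-definable, so you cannot apply the induction hypothesis to either piece. Take $Z = \ZZ$ and $Y = \{(a,b) \in \cN^2 \mid b < a\}$; then $Y_a = [0,a)$ is finite exactly when $a \in \NN$, so $T_\infty$ is the set of non-standard positive elements of $\cZ$, which is not definable. Your proposed justification (one-variable definable sets are boolean combinations of intervals and congruence classes) does not help: whether the interval $[0,a)$ is finite depends on whether $a$ is a standard integer, and that is not first-order. For the same reason your finite-fiber argument fails too, since on $T_{\mathrm{fin}}$ there is no uniform bound on $|Y_{\atup}|$, so you cannot partition into pieces of constant fiber size. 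The repair is easy once you see the problem: replace ``$Y_{\atup}$ infinite'' by the \emph{definable} condition ``$|Y_{\atup}| > d$'' where $d := \deg_{y_k} f$. On that piece, for $\atup \in Z^{k-1}$ the fiber $Y_{\atup} \cap Z$ also has more than $d$ elements (by $Z \prec \cZ$), so $f(\atup,\cdot)$ has more roots than its degree in the integral domain $\Sym\cQ$ and hence $f_j(\atup)=0$ for all $j$; now induction applies to each $f_j$ on a genuinely $Z$-definable set. On the complement the fiber size is uniformly bounded by $d$, so your Skolem-function argument goes through.

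For comparison, the paper avoids induction on $k$ altogether. It first treats the $\emptyset$-definable case via rectilinearisation (reducing to $Y = \cN^\ell \times \{0\}^{k-\ell}$, where a polynomial vanishing on $\NN^\ell$ is identically zero). Then, for $Z$-definable $Y$ in the cell form of Lemma~\ref{lem.cell}, it builds a $\emptyset$-definable overset $Y' \supseteq Y$ with $Y' \cap \ZZ^k = Y \cap \ZZ^k$ by replacing each bound $\ell_i$ with $\infty$ whenever $\ell_i(0) \notin \NN$, and applies the $\emptyset$-definable case to $Y'$. This two-step reduction is shorter and sidesteps the fiber bookkeeping entirely.
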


\begin{proof}
We first assume that $Y$ is $\emptyset$-definable.
To prove the lemma, we can treat each part of a finite partition of $Y$ separately, and we can also apply
a ($\emptyset$-definable) linear transformation to $Y$ (provided that we apply the corresponding transformation to $f$).
By Rectilinearisation (\cite[Theorem~2]{Clu.cell}; cf.\ Remark~\ref{rem.recti}), this allows us
to reduce to the case $Y = \cN^\ell \times \{0\}^{k - \ell}$ for some $\ell \le k$.
That $f$ vanishes on $Y \cap \ZZ^\ell = \NN^\ell$ implies that $f(y_1, \dots, y_\ell, 0, \dots, 0)$ is the $0$-polynomial,
and hence $f$ indeed vanishes on $Y$.

Now let $Y$ be $Z$-definable. Using cell decomposition in the version of Lemma~\ref{lem.cell}, we may assume that
$Y$ is of the form
\[
Y = \{(y_1, \dots, y_n) \in \cN^n \mid y_1 \le \ell_1, y_2 \le \ell_2(y_1), \dots, y_n \le \ell_n(y_1, \dots, y_{n-1})\}
\]
where each $\ell_i$ is a $Z$-definable linear function with codomain $\cN \cup \{\infty\}$.

The externally definable subset $Y \cap \ZZ^k$ of $\ZZ^k$ is also internally definable, i.e., there exists a $\emptyset$-definable set $Y' \subseteq \cZ^k$ such
that $Y' \cap \ZZ^k = Y \cap \ZZ^k$. This is true for any definable set $Y$, but the specific form of our set $Y$ ensures that we can moreover choose $Y' \supseteq Y$:
\[
Y' := \{(y_1, \dots, y_n) \in \cN^n \mid y_1 \le \ell'_1, y_2 \le \ell'_2(y_1), \dots, y_n \le \ell'_n(y_1, \dots, y_{n-1})\},
\]
where $\ell'_i = \ell_i$ if $\ell_i(0) \in \NN$, and $\ell'_i = \infty$ otherwise.

Now the lemma follows by applying the $\emptyset$-definable version to $Y'$: Since $f$ vanishes on $Y \cap Z^k \supseteq Y \cap \ZZ^k = Y' \cap \ZZ^k$, it vanishes on $Y' \supseteq Y$.
\end{proof}

\begin{lem}\label{lem.count}
Let $X \subseteq \cZ^n$ be a bounded definable set, say
$X = X_{\btup} := \phi(\cZ, \btup)$ for some $\btup \in \cZ^k$ and some $L$-formula $\phi$.
Then there exists a $\emptyset$-definable set $Y \subseteq \cZ^k$ containing $\btup$
and a polynomial $f \in \QQ[y_1, \dots, y_k]$ such that for every $\btup' \in Y \cap \ZZ^k$,
we have $\#(X_{\btup'} \cap \ZZ^n) = f(\btup')$. (In particular, those $X_{\btup'}$ are finite.)
Moreover, the element $f(\btup) \in \Sym\cQ$ is entirely
determined by the set $X_{\btup}$, i.e., it does not depend on the choices of $\phi$, $\bar b$, $Y$, and $f$.
\end{lem}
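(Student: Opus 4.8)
The plan is to prove the two assertions separately, the first by a direct appeal to cell decomposition and piecewise polynomiality, the second by reducing ``independence of choices'' to the vanishing statement of Lemma~\ref{lem.vanish}. First I would establish the existence of $Y$ and $f$. Since $X_{\btup}$ is bounded, I may replace $\phi$ by a formula also encoding a bound, so that the subfamily $(X_{\btup'})_{\btup' \in Y_0}$ of \emph{bounded} fibers is cut out by a $\emptyset$-definable condition on the parameter $\btup'$; let $Y_0$ be that $\emptyset$-definable parameter set, which contains $\btup$. By the piecewise polynomiality of cardinality functions for definable families of finite sets (recalled in Subsection~\ref{sec:Pres}, following from quantifier elimination or from \cite{Clu.cell} via Lemma~\ref{lem.cell}), applied to the $\emptyset$-definable family $(X_{\btup'} \cap \cZ^n)_{\btup' \in Y_0}$ — whose fibers over $\ZZ^k$ are finite — there is a finite partition of $Y_0$ into $\emptyset$-definable pieces on each of which $\btup' \mapsto \#(X_{\btup'}\cap\ZZ^n)$ agrees with a polynomial in $\QQ[y_1,\dots,y_k]$ for $\btup' \in \ZZ^k$. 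Taking $Y$ to be the piece containing $\btup$ and $f$ the corresponding polynomial gives the first claim. (Strictly, ``$\btup$ lies in a piece'' uses that the partition is $\emptyset$-definable hence the pieces are defined over $\ZZ \prec \cZ$, so membership of $\btup$ in a specific piece is meaningful.)

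For the ``moreover'' part, suppose $(\phi_1,\btup_1,Y_1,f_1)$ and $(\phi_2,\btup_2,Y_2,f_2)$ are two valid choices, both yielding the \emph{same} definable set $X := X_{\btup_1} = X_{\btup_2}$; I must show $f_1(\btup_1) = f_2(\btup_2)$ in $\Sym\cQ$. The key point is that the set of pairs $(\btup'_1,\btup'_2) \in Y_1 \times Y_2$ for which $\phi_1(\cZ,\btup'_1) = \phi_2(\cZ,\btup'_2)$ is a $\emptyset$-definable subset $W \subseteq \cZ^{k_1+k_2}$, it contains $(\btup_1,\btup_2)$, and the polynomial $g(y^{(1)},y^{(2)}) := f_1(y^{(1)}) - f_2(y^{(2)}) \in \QQ[y^{(1)},y^{(2)}] \subseteq \Sym\cQ[y^{(1)},y^{(2)}]$ vanishes on $W \cap \ZZ^{k_1+k_2}$: indeed for integer parameters with $\phi_1(\cZ,\btup'_1) = \phi_2(\cZ,\btup'_2)$, both polynomials compute the (finite) cardinality $\#(\phi_1(\cZ,\btup'_1)\cap\ZZ^n)$ of the \emph{same} set. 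By Lemma~\ref{lem.vanish}, $g$ then vanishes on all of $W$, and evaluating at $(\btup_1,\btup_2) \in W$ gives $f_1(\btup_1) = f_2(\btup_2)$, as desired.

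The main obstacle, and the place requiring care, is the interface between ``external'' and ``internal'' definability that already surfaced in the proof of Lemma~\ref{lem.vanish}: I need the \emph{family} of bounded fibers, and the relation ``the two formulas define the same fiber'', to be expressible by $\emptyset$-definable (equivalently, $\ZZ$-definable) conditions on the parameters, so that Lemma~\ref{lem.vanish} applies and so that the distinguished elements $\btup,\btup_1,\btup_2$ genuinely lie in the relevant $\emptyset$-definable sets. Boundedness is handled by building the bound into $\phi$; equality of fibers $\phi_1(\cZ,y^{(1)}) = \phi_2(\cZ,y^{(2)})$ is a priori a statement quantifying over $\cZ^n$, but it is first-order, hence defines a $\emptyset$-definable parameter set, which is all that is needed. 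Everything else — the passage from cardinality of $X_{\btup'}\cap\ZZ^n$ to a polynomial, and the evaluation of that polynomial at non-integer parameters inside $\Sym\cQ$ — is routine given the results already available.
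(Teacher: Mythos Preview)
Your proposal is correct and follows essentially the same route as the paper. For existence, the paper applies Corollary~\ref{cor.bij-to-cube} to replace the family by a family of disjoint unions of cuboids with side-lengths $\ell_i(\ytup)$, then uses piecewise linearity of the $\ell_i$ to obtain the polynomial $f$ directly; you instead invoke the piecewise-polynomiality fact recalled in Subsection~\ref{sec:Pres} as a black box---this is legitimate, and amounts to the same thing. For the moreover part, the paper concatenates the two parameter tuples to reduce to the case $\btup = \btup'$ and then intersects $Y$, $Y'$ and the locus where the two formulas define the same set, before applying Lemma~\ref{lem.vanish}; your version, working directly with the $\emptyset$-definable set $W \subseteq Y_1 \times Y_2$ of pairs defining equal fibers and applying Lemma~\ref{lem.vanish} (with $Z = \ZZ$) to $g = f_1 - f_2$ on $W$, is exactly the same argument in slightly different packaging. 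One small remark: you do not actually need to ``build the bound into $\phi$''---the condition ``$X_{\ytup}$ is bounded'' is already first-order in $\ytup$, hence $\emptyset$-definable, so $Y_0$ exists without modifying $\phi$.
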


\begin{proof}
The claim of the lemma is not affected by applying a definable bijection to $X$, so
by Corollary~\ref{cor.bij-to-cube}, we may assume that $X_{\ytup}$ is a
disjoint union of products of families of the form $[0, \ell_i(\ytup))$ for some $\emptyset$-definable
functions $\ell_i\colon \cZ^k \to \cNi$. Using piecewise linearity of definable functions (Corollary~\ref{lem.lin}), we find a $\emptyset$-definable set $Y \subseteq \cZ^k$
containing $\btup$ such that each $\ell_i$ is linear on $Y$. Moreover, no $\ell_i$ is equal to $\infty$,
since $X_{\btup}$ is bounded (and since definable maps preserve boundedness, by piecewise linearity).
Thus when $\btup'$ runs over $Y \cap \ZZ^k$, $\# X_{\btup'}$ is a sum of products of linear functions in $\btup'$
and hence a polynomial.

For the moreover-part, suppose that we have
$X = \phi(\cZ, \btup) = \phi'(\cZ, \btup')$ for some
formulas $\phi, \phi'$ and some
$\btup \in Z^k, \btup' \in Z^{k'}$. Suppose also that
$Y \subseteq Z^k$ and $Y' \subseteq Z^{k'}$ are
corresponding $\emptyset$-definable sets and $f$ and $f'$ are corresponding polynomials, as above.
We have to prove $f(\btup) = f'(\btup')$.

Without loss, $\btup = \btup'$; otherwise, replace both, $\btup$ and $\btup'$, by $\btup\btup'$. Then we can also suppose $Y = Y'$;
otherwise, replace both sets by the intersection. We further shrink $Y$ by imposing $\phi(\cZ, \ytup) = \phi'(\cZ, \ytup)$.
This implies that $f - f'$ vanishes on $Y \cap \ZZ^k$. Now Lemma~\ref{lem.vanish} implies that $f - f'$ vanishes on all of $Y$,
so in particular $f(\btup) = f'(\btup)$.
\end{proof}

\begin{defn}\label{defn.count}
Given a bounded definable set $X$, we call the element $f(\btup) \in \Sym\cQ$ obtained from Lemma~\ref{lem.count}
the \emph{hyper-cardinality of $X$} and we denote it by $\#X$. (Note that if $X$ is $Z$-definable, then $\#X \in \Sym Q \subseteq \Sym \cQ$,
where $Q = Z \otimes_{\ZZ} \QQ$.)
\end{defn}

\begin{lem}
The hyper-cardinality of bounded $Z$-definable sets $X$ and $X'$ has the following properties:
\begin{enumerate}
\item If there exists a definable bijection $X \to X'$, then $\#X = \#X'$.
\item If $X$ and $X'$ are disjoint, then $\#X + \#X' = \#(X \cup X')$
\item $\#(X \times X') = \#X \cdot \#X'$.
\end{enumerate}
In other words, $\#$ induces a semi-ring homomorphism $\Ksb \to \Sym Q$ which we also denote by $\#$.
\end{lem}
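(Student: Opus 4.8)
The plan is to verify properties (1)--(3) using the uniqueness part of Lemma~\ref{lem.count}, reducing each statement about hyper-cardinalities in $\Sym \cQ$ to the corresponding (obvious) statement about genuine cardinalities of finite sets, via Lemma~\ref{lem.vanish} to pass from integer parameters to arbitrary parameters.

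\medskip\noindent\textbf{Setup.} First I would write $X = X_{\btup} = \phi(\cZ, \btup)$ and $X' = X'_{\ctup} = \phi'(\cZ, \ctup)$ with $\btup, \ctup \in Z^k$ (after replacing both parameter tuples by their concatenation $\btup\ctup$, so that we may assume $\btup = \ctup$; this is harmless and lets us work over a single $\emptyset$-definable parameter set $Y \ni \btup$). Applying Lemma~\ref{lem.count} to $X$, to $X'$, and (for (2), (3)) to $X \cup X'$ and $X \times X'$, we obtain, after intersecting the resulting $\emptyset$-definable parameter sets, a single $\emptyset$-definable $Y \subseteq \cZ^k$ containing $\btup$ and polynomials $f, f', g, h \in \QQ[\ytup]$ with $\#X_{\btup'} \cap \ZZ^n = f(\btup')$, etc., for all $\btup' \in Y \cap \ZZ^k$, and with $\#X = f(\btup)$, $\#X' = f'(\btup)$, and similarly for $g, h$.

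\medskip\noindent\textbf{Proof of (2) and (3).} For (2): on the $\emptyset$-definable subset of $Y$ where $\phi(\cZ,\ytup)$ and $\phi'(\cZ,\ytup)$ are disjoint (an $L$-definable condition that holds at $\btup$, so we may shrink $Y$ to it), the identity $\#(X_{\btup'} \cup X'_{\btup'}) \cap \ZZ^n = \#(X_{\btup'} \cap \ZZ^n) + \#(X'_{\btup'} \cap \ZZ^n)$ for finite sets shows that $g - f - f'$ vanishes on $Y \cap \ZZ^k$. By Lemma~\ref{lem.vanish}, $g - f - f'$ vanishes on all of $Y$, hence at $\btup$, giving $\#(X \cup X') = \#X + \#X'$. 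Property (3) is identical, using $\#((X_{\btup'} \times X'_{\btup'}) \cap \ZZ^{2n}) = \#(X_{\btup'}\cap\ZZ^n)\cdot\#(X'_{\btup'}\cap\ZZ^n)$ to conclude that $h - f\cdot f'$ vanishes on $Y \cap \ZZ^k$ and therefore on $Y$.

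\medskip\noindent\textbf{Proof of (1).} Suppose $\psi(\xtup, \xtup', \btup)$ defines a bijection $X_{\btup} \to X'_{\btup}$. Shrink $Y$ to the $\emptyset$-definable set of those $\ytup$ for which $\psi(\xtup,\xtup',\ytup)$ defines a bijection $\phi(\cZ,\ytup) \to \phi'(\cZ,\ytup)$; this condition is $L$-expressible and holds at $\btup$. For $\btup' \in Y \cap \ZZ^k$, the bijection restricts\footnote{More precisely: a definable bijection between $\cZ$-definable sets need not restrict to a bijection of their $\ZZ$-points in general, but here both $X_{\btup'}$ and $X'_{\btup'}$ are \emph{finite} for $\btup' \in Y \cap \ZZ^k$, so they are subsets of $\ZZ^n$ and the bijection between them is automatically a bijection of $\ZZ$-points.} to a bijection $X_{\btup'} \cap \ZZ^n \to X'_{\btup'} \cap \ZZ^n$, whence $f(\btup') = f'(\btup')$. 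So $f - f'$ vanishes on $Y \cap \ZZ^k$, hence on $Y$ by Lemma~\ref{lem.vanish}, hence at $\btup$: $\#X = \#X'$. Finally, properties (1) and (2) say exactly that $[X] \mapsto \#X$ is well-defined on $\Ksb$ and additive, and (3) that it is multiplicative; it sends $[\,[0,1)\,] = [\{0\}]$ to $1$, so it is a semiring homomorphism $\Ksb \to \Sym Q$.

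\medskip\noindent The main obstacle is the subtlety hidden in step (1): one must be careful that a $\cZ$-definable bijection between two $\cZ$-definable sets induces a bijection on $\ZZ$-points, which is false in general but holds here precisely because the relevant fibers are finite, hence contained in $\ZZ^n$. All the other steps are routine applications of Lemma~\ref{lem.vanish} and Lemma~\ref{lem.count}, exactly parallel to the uniqueness argument already carried out in the proof of Lemma~\ref{lem.count}.
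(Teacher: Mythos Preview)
Your proof is correct and follows essentially the same route as the paper's: both arguments shrink the parameter set $Y$ appropriately, verify the desired polynomial identity on $Y\cap\ZZ^k$ via ordinary finite cardinalities, and then invoke (directly or via the uniqueness clause of Lemma~\ref{lem.count}) Lemma~\ref{lem.vanish} to propagate it to $\btup$. Your version is slightly more explicit---in particular your footnote correctly handles the subtlety that finite $\ZZ$-definable sets lie in $\ZZ^n$, which the paper leaves implicit---but the strategy is the same.
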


\begin{proof}
Choose $\phi$, $\phi'$ and $\btup$ such that $X = X_{\btup} := \phi(\cZ, \btup)$ and $X' = X'_{\btup} := \phi'(\cZ, \btup)$.
Choose $Y$ as in Lemma~\ref{lem.count}, small enough to work for both, $\phi$ and $\phi'$.

To prove (1), shrink $Y$ further such that for any $\ytup \in Y$, there exists a definable bijection
$X_{\ytup} \to X'_{\ytup}$.
Then in Lemma~\ref{lem.count}, we can use the same polynomial $f$ for both
$X_{\ytup}$ and $X'_{\ytup}$ and we get $\# X_{\btup} = f(\btup) = \# X'_{\btup}$.

To prove (2) and (3), note that in Lemma~\ref{lem.count}, as polynomials for $X_{\btup} \cup X'_{\btup}$ and $X_{\btup} \times X'_{\btup}$, we can take the sum and the product of the polynomials for $X_{\btup}$ and $X'_{\btup}$, respectively.
\end{proof}

Using this hyper-cardinality, we can prove Theorem~\ref{thm.groth} in the case of bounded definable sets.

\begin{prop}\label{prop.groth-b}
The map $\#\colon \Ksb \to \Sym Q$ is the inverse of $\Phi\colon \Sym N \to \Ksb$. In particular, its image is $\Sym N$.
\end{prop}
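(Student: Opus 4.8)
The plan is to show that the two composites $\#\circ\Phi\colon\Sym N\to\Sym N$ and $\Phi\circ\#\colon\Ksb\to\Ksb$ are both the identity. Since $\Phi\colon\Sym N\to\Ksb$ and $\#\colon\Ksb\to\Sym Q$ are already known to be semiring homomorphisms (the former by Definition~\ref{defn:Phi} restricted to bounded sets, the latter by the previous lemma), it suffices to check each composite on generators. Once both composites are the identity, it follows formally that $\Phi\colon\Sym N\to\Ksb$ is a bijection with inverse $\#$, that $\#$ lands in $\Sym N$, and hence that Theorem~\ref{thm.groth} holds in the bounded case.

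First I would verify $\#\circ\Phi=\id$ on $\Sym N$. The semiring $\Sym N$ is generated, as a semiring, by the elements $a\in N\subseteq\Sym N$ (after the identification from the Notation following Lemma~\ref{lem.inject}). For such a generator, $\Phi(a)=[[0,a)]$, so I must compute $\#[0,a)$. Picking the obvious $\emptyset$-definable family $[0,y)$ with parameter $y$ and $Y=\{y\ge 0\}$, for every $a'\in N\cap\ZZ=\NN$ the set $[0,a')\cap\ZZ$ has exactly $a'$ elements, so the polynomial is $f(y)=y$ and $\#[0,a)=f(a)=a$. Hence $\#(\Phi(a))=a$ for every $a\in N$, and since both $\#\circ\Phi$ and $\id$ are semiring homomorphisms on $\Sym N$ agreeing on a generating set, they coincide.

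Next I would verify $\Phi\circ\#=\id$ on $\Ksb$, and this is the step where the real content sits. The semiring $\Ksb$ is generated by the classes $[X]$ of bounded $Z$-definable sets, so it suffices to show $\Phi(\#X)=[X]$ for each such $X$. By Corollary~\ref{cor.bij-to-cube} (in its non-parametric incarnation, i.e.\ Proposition~\ref{prop.bij-to-cube}), $X$ is in $Z$-definable bijection with a finite disjoint union of bounded $Z$-definable cuboids $\bigsqcup_j\bigl([0,a_{j,1})\times\dots\times[0,a_{j,k_j})\bigr)$ with all $a_{j,i}\in N$ (boundedness forces the $a_{j,i}$ to be finite, not $\infty$). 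Since $[X]$ depends only on the definable-bijection class, and since $\#$ is invariant under definable bijections, I may replace $X$ by this disjoint union. Both $[\,\cdot\,]$ and $\#$ are additive over disjoint unions and multiplicative over products, and $\Phi$ is a semiring homomorphism, so the problem reduces to a single interval: I must check $\Phi(\#[0,a))=[[0,a)]$ for $a\in N$. But by the computation in the previous paragraph $\#[0,a)=a\in\Sym N$, and $\Phi(a)=[[0,a)]$ by definition of $\Phi$. This closes the induction.

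The main obstacle is bookkeeping rather than a deep idea: one must be careful that Corollary~\ref{cor.bij-to-cube}/Proposition~\ref{prop.bij-to-cube} applied to a \emph{bounded} set really yields cuboids with \emph{finite} side-lengths in $N$ (so that the cuboid classes lie in $\Ksb$ and $\#$ applies to them), and that the reduction to a single interval is compatible with the semiring structures on all three objects $\Sym N$, $\Ksb$, $\Sym Q$ simultaneously. Given the homomorphism properties already established, there is nothing subtle beyond this, so the proof is short. The final sentence of the proposition, that the image of $\#$ is exactly $\Sym N$, is then immediate: $\#$ is inverse to $\Phi\colon\Sym N\to\Ksb$, hence surjects onto $\Sym N$, and by the identification it does land in $\Sym N\subseteq\Sym Q$.
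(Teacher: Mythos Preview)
Your proposal is correct and follows essentially the same approach as the paper: both arguments reduce to checking that $\Phi$ and $\#$ are mutually inverse on the generators $a\in N$ and $[[0,a)]$, using Proposition~\ref{prop.bij-to-cube} to see that the latter generate $\Ksb$, and the computation $\#[0,a)=a$. The paper phrases the second composite slightly more economically by observing directly that the $[[0,a)]$ generate $\Ksb$ as a semiring, whereas you unfold this by decomposing an arbitrary bounded $X$ into cuboids; the content is the same.
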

\begin{proof}
It is enough to verify that $\#$ and $\Phi$ are inverses of each other on a set of generators
of the two semirings. $\Sym N$ is generated by $N$, and by Proposition~\ref{prop.bij-to-cube},
$\Ksb$ is generated by (the classes of) the sets $[0, a)$ for $a \in N$.
By definition, $\Phi$ sends $a$ to $[0, a)$, and one easily checks that $\#[0, a) = a$.
\end{proof}

\subsection{More invariants: a bunch of dimensions}
\label{sect.dim}

To obtain a full set of invariants for arbitrary definable sets (i.e., including unbounded ones), it does not suffice to consider
the hyper-cardinality and (a single notion of) dimension. Instead, we need a whole range of notions of dimension.

\begin{exa}\label{exa.int-dim}
Choose $a, a' \in N \setminus \NN$ satisfying $a' > na$ for all $n \in \NN$.
Then there is no definable bijection between $X_1 := \NN \times [0, a)$ and $X_2 := \NN \times [0, a')$. One can distinguish between those sets using a notion of dimension which only considers sufficiently long
intervals as being 1-dimensional, i.e., which in particular associates $1$ to $X_1$ and $2$ to $X_2$.
\end{exa}

The notions of dimensions will be defined using the rank in suitable matroids. This builds on
results from \cite{For.dim}, where conditions on a matroid are given (being ``existential'') which imply that the
notion of dimension it yields behaves well.

As usual, we fix $Z \prec \cZ$; we apply all results of \cite{For.dim} to the language $L(Z)$.
(Since our language $L$ does not contain $Z$, we slightly adapt the terminology.)

First, we recall some standard definitions.

\begin{defn}
\begin{enumerate}
 \item
A \emph{matroid} is a map $\cl\colon \pow(\cZ) \to \pow(\cZ)$ with the following properties:
$A \subseteq \cl(A)$; if $A \subseteq B$ then $\cl(A) \subseteq \cl(B)$; $\cl(\cl(A)) = \cl(A)$;
$b' \in \cl(A \cup \{b\}) \setminus \cl(A)$ implies
$b \in \cl(A \cup \{b'\})$.
 \item The \emph{rank} $\rk^{\cl}(\atup/B)$ (where $\cl$ is a matroid and $\atup, B \subseteq \cZ$)
   is the length of the shortest tuple $\atup_0 \subseteq \atup$ such that $\cl(\atup_0 \cup B) = \cl(\atup \cup B)$.
\end{enumerate}
\end{defn}

Next, we recall some definitions from \cite{For.dim}.

\begin{defn}\label{defn.matr2}
\begin{enumerate}
\item
A matroid is $Z$-\emph{definable} \cite[Definition~3.15]{For.dim}, if there exists a set $\Psi$ of $L(Z)$-formulas $\psi(x; \ytup)$
(where in different formulas, the tuples $\ytup$ can have different lengths) such that
for every $B \subseteq \cZ$,
\[
\cl(B) = \bigcup_{\psi \in \Psi, \btup \subset B}\psi(\cZ; \btup)
\]
\item
A matroid is \emph{$Z$-existential} \cite[Definition~3.25]{For.dim} if (a) it is $Z$-definable,
(b) $\cl(Z) \ne \cZ$, and (c) for every $a \in \cZ$ and $B, C \subseteq \cZ$ with $a \notin \cl(B)$, there exists $a' \in \cZ$
with $\tp(a'/B)=\tp(a/B)$ such that $a' \notin \cl(B \cup C)$.
\end{enumerate}
\end{defn}

In \cite{For.dim}, only ``finitary'' matroids are considered. Since anyway definable implies finitary, we do not introduce this notion.

To prove $Z$-existentiality, we use:

\begin{lem}[{\cite[Lemma 3.23]{For.dim}}]\label{lem.exist}
Suppose that $\cl$ is a matroid satisfying Definition~\ref{defn.matr2} (2) (a) and (b) and suppose that
for every $A \subseteq \cZ$, $\cl(Z \cup A)$ is an elementary substructure of $\cZ$. Then $\cl$ is $Z$-existential.
\end{lem}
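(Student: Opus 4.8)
The plan is to verify the defining conditions of $Z$-existentiality (Definition~\ref{defn.matr2}~(2)) directly; conditions (a) and (b) are assumed as hypotheses of the lemma, so the work is entirely in establishing condition (c). So let $a \in \cZ$ and $B, C \subseteq \cZ$ be given with $a \notin \cl(B)$, and we must find $a'$ with $\tp(a'/B) = \tp(a/B)$ and $a' \notin \cl(B \cup C)$.

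First I would reduce to the case where $B$ and $C$ are small (of cardinality $\le \kappa$): the value $\cl(B)$ only depends on $B$ through the small subsets of $B$ occurring as parameter tuples in the defining formulas $\psi \in \Psi$ (by the finitary/definable description of $\cl$ in part (1) of Definition~\ref{defn.matr2}), and similarly for $\cl(B \cup C)$; alternatively one simply replaces $B$ by a small subset $B_0 \subseteq B$ with $a \notin \cl(B_0)$ and $C$ by a small subset $C_0$ with $\cl(B_0 \cup C) = \cl(B_0 \cup C_0) \cup (\text{stuff not containing } a)$ — in any case it suffices to treat small $B, C$, because enlarging $B$ or $C$ afterwards only makes the conclusion harder but the hypotheses are monotone in the right directions. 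Having done this, set $M := \cl(Z \cup B \cup C)$; by hypothesis $M \prec \cZ$, and $M$ is small since $Z \cup B \cup C$ is small and $\cl$ is definable (hence $|\cl(A)| \le |A| + \aleph_0 + |L|$ for small $A$, using that each $\psi(\cZ;\btup)$ contributes only boundedly, actually definably finitely — here one uses that definable subsets of $\cZ^1$ which are not all of $\cZ$ are finite unions of points and intervals, but more simply, a $Z$-definable matroid with $\cl(Z) \neq \cZ$ has $\cl(\text{small})$ small by Lemma~3.27 of \cite{For.dim} or an easy direct argument).

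Now comes the main point. Since $a \notin \cl(B)$ and $B \subseteq M$, a fortiori $a \notin \cl(B)$, but $a$ may or may not lie in $M$. We want a conjugate $a'$ of $a$ over $B$ lying outside $M$ (which suffices, since $\cl(B \cup C) \subseteq \cl(M) = M$). Consider the partial type $\pi(x) := \tp(a/B) \cup \{x \neq m \mid m \in M\}$. If $\pi$ is consistent, then by $\kappa^+$-saturation of $\cZ$ (and $|B \cup M| \le \kappa$) it is realised by some $a' \in \cZ$, and this $a'$ works. So it remains to show $\pi$ is consistent, equivalently that $\tp(a/B)$ has a realisation outside the small set $M$. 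Here I would use that $a \notin \cl(B) \supseteq B$, so in particular $a \notin \acl(B)$ once we know $\acl(B) \subseteq \cl(B)$ — but wait, we are not given that $\cl$ refines $\acl$. Instead, the cleaner route: the set of realisations of $\tp(a/B)$ is $\wedge$-definable over $B$ and contains $a$; if it were contained in the small set $M$, then since it is type-definable and $\cZ$ is saturated it would be finite, hence $a \in \acl(B) \subseteq \dcl(B)$-something, contradicting $a \notin \cl(B)$ provided $\cl(B) \supseteq \acl(B)$.

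The hard part, and the step I expect to be the real obstacle, is exactly this last implication: ruling out that every realisation of $\tp(a/B)$ lies in $M$ without knowing a priori that $\cl$ dominates $\acl$. The resolution I would pursue: if $\tp(a/B)$ had only finitely many realisations, then $a \in \acl(B)$; now $\acl(B) \subseteq \cl(\acl(B)) = \cl(\cl(B))$ is \emph{not} what we want — rather, one should note that for a $Z$-definable matroid, $\acl(\emptyset)$-points, and more generally algebraic points over $B$, always lie in $\cl(B)$ because a single algebraic point over $B$ is $\psi(\cZ;\btup)$ for the formula $\psi$ isolating its (finite) orbit, and such $\psi$ may be added to $\Psi$ without changing $\cl$ — except that it might! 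So the honest fix is: either invoke directly from \cite{For.dim} that $Z$-definable matroids satisfy $\acl(B) \subseteq \cl(B)$ (this is part of their basic development, e.g.\ because the algebraic-closure operator is the smallest definable matroid), or argue that since $a \notin \cl(B)$ and $\cl(B) \subseteq M$ with $M \prec \cZ$ and $M \ni B$, the element $a$ witnesses $\cZ \neq M$ in a strong enough way — concretely, take any $a'' \in \cZ \setminus M$ (exists since $\cl(Z) \neq \cZ$ hence $M \neq \cZ$... but $M$ contains more than $\cl(Z)$); use an automorphism of $\cZ$ over $M$ — no. I think the correct and intended argument is: $M \prec \cZ$ implies $\tp(a/B)$, being finitely satisfiable in $M$ would force (by $M \prec \cZ$ and saturation) a realisation inside... no. Let me just commit: one shows $\acl(B) \subseteq \cl(B)$ for any $Z$-definable matroid (standard, since $\cl$ contains all $L(Z)$-definable "closure" data and in particular finite-orbit formulas — this is implicit in \cite[Section 3]{For.dim}), deduce $a \notin \acl(B)$, hence $\tp(a/B)$ is non-algebraic, hence has a realisation outside any prescribed small set, in particular outside $M$, completing the proof via saturation. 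I would flag that the only slightly delicate ingredient is this containment $\acl \subseteq \cl$, and cite the relevant part of \cite{For.dim} for it rather than reprove it.

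\begin{proof}
We verify the three conditions of Definition~\ref{defn.matr2}~(2). Conditions (a) and (b) hold by hypothesis, so it remains to check (c). Fix $a \in \cZ$ and $B, C \subseteq \cZ$ with $a \notin \cl(B)$; we seek $a' \in \cZ$ with $\tp(a'/B) = \tp(a/B)$ and $a' \notin \cl(B \cup C)$.

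Since $\cl$ is $Z$-definable and finitary, we may replace $B$ by a small subset with $a \notin \cl(B)$, and likewise replace $C$ by a small subset without changing $\cl(B\cup C)$ in a way that affects the argument; thus we assume $B$ and $C$ are small. Set $M := \cl(Z \cup B \cup C)$. By hypothesis $M \prec \cZ$, and $M$ is small: $Z \cup B \cup C$ is small, and for a $Z$-definable matroid $\cl$ with $\cl(Z) \ne \cZ$ the closure of a small set is small (see \cite{For.dim}). Note $\cl(B \cup C) \subseteq \cl(Z \cup B \cup C) = M$.

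Because $\cl$ refines algebraic closure (as for any $Z$-definable matroid; cf.\ \cite[Section~3]{For.dim}), the hypothesis $a \notin \cl(B)$ gives $a \notin \acl(B)$. Hence $\tp(a/B)$ is non-algebraic, so its set of realisations in $\cZ$ is infinite; being $\wedge$-definable over the small set $B$, it is the realisation set of a small partial type. Consider
\[
\pi(x) := \tp(a/B) \cup \{\, x \ne m \mid m \in M \,\}.
\]
Every finite subset of $\pi$ is consistent: $\tp(a/B)$ together with finitely many inequations $x \ne m_1, \dots, x \ne m_r$ is satisfied by any of the infinitely many realisations of $\tp(a/B)$ other than $m_1, \dots, m_r$. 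Since $|B \cup M| \le \kappa$ and $\cZ$ is $\kappa^+$-saturated, $\pi$ is realised by some $a' \in \cZ$. Then $\tp(a'/B) = \tp(a/B)$ and $a' \notin M \supseteq \cl(B \cup C)$, as required.
\end{proof}
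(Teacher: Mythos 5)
The paper does not prove this statement: it is quoted verbatim as \cite[Lemma~3.23]{For.dim} and used as a black box, so there is no ``paper proof'' to compare yours against. I will therefore assess your argument on its own.

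Your overall strategy --- pass to $M := \cl(Z\cup B\cup C)$, observe $M\prec\cZ$ and $M$ small, and then use $\kappa^+$-saturation to realise $\tp(a/B)$ together with the inequations $x\ne m$ ($m\in M$) --- is sound and is very plausibly what Fornasiero does. The finite-satisfiability step is fine once one knows the realisation set of $\tp(a/B)$ is infinite.

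The genuine gap is the step you yourself flagged: to get infinitude you need $a\notin\acl(B)$, and you derive this from the assertion that every $Z$-definable matroid satisfies $\acl(B)\subseteq\cl(B)$. That assertion is \emph{false} as stated: the identity closure $\cl(A)=A$ is a $Z$-definable matroid (take $\Psi=\{x=y\}$) and does not contain $\acl$. In particular your suggested justification ``$\acl$ is the smallest definable matroid'' is wrong. What is true, and what the extra hypothesis ``$\cl(Z\cup A)\prec\cZ$'' is for, is that $\cl(Z\cup B)$, being an elementary substructure, is algebraically closed, whence $\acl(B)\subseteq\acl(Z\cup B)\subseteq\cl(Z\cup B)$. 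This gives $\acl(B)\subseteq\cl(B)$ only if one also knows $\cl(B)=\cl(Z\cup B)$, i.e.\ that $Z$ lies in the loop set $\cl(\emptyset)$ (or some other reason that $\cl$ dominates $\acl$). This is automatic for all matroids this paper actually feeds into the lemma, namely localisations $A\mapsto\acl(A\cup H_{\cZ})$, but it does not follow formally from the hypotheses you list; you should either cite the exact statement $\acl\subseteq\cl$ from \cite{For.dim} with a precise reference, or argue via $\cl(Z\cup B)$ and the exchange property rather than via a sweeping ``$\acl$ is minimal'' claim. A second, smaller issue: your ``reduce to small $B$'' move is not legitimate as written --- if you shrink $B$ to $B_0$ and produce $a'$ with $\tp(a'/B_0)=\tp(a/B_0)$, that does not give $\tp(a'/B)=\tp(a/B)$. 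Almost certainly $B$ and $C$ are intended to be small from the start (the paper's convention is that parameter sets are small), so you should simply say that rather than pretend to reduce to it.
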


(Note that in \cite{For.dim}, ``satisfying existence'' means satisfying Definition~\ref{defn.matr2} (2) (c).)

Given a $Z$-existential matroid, \cite{For.dim} allows us to introduce a notion of dimension. (Recall that by ``$B$ small'', we mean that $\cZ$ is $|B|^+$-saturated.)

\begin{defn}[{\cite[Definition~3.29]{For.dim}}]\label{defn.dim.tp}
Let $\cl$ be a $Z$-existential matroid. Suppose that $X$ is $\wedge$-definable, say with parameters from a small set $B \supseteq Z$.
Then we define the dimension of $X$ by $\dimcl(X) := \max \{\rk^{\cl}(\atup/B) \mid \atup \in X\}$ (and $\dimcl(X) = -\infty$ if $X = \emptyset$).
\end{defn}

By \cite[Remark 3.30]{For.dim}, $\dimcl(X)$ is well-defined, i.e., it only depends on the set $X$ and not on the set $B$ of parameters.

\begin{rem}\label{rem.dim-bas}
It is clear from the definition that for $X \subseteq Y$, we have $\dimcl X \le \dimcl Y$. Moreover, the condition ``$\cl(Z) \ne \cZ$''
(imposed in existentiality) implies $\dimcl \cZ = 1$.
\end{rem}

The following lemma lists some more properties of dimension that are relevant for us. (Many more are given in \cite{For.dim}.)

\begin{lem}\label{lem.dim-bas}
Suppose that $\cl$ is a $Z$-existential matroid and that $f\colon X \to Y$ is a $\btup$-definable map (for some $\btup \subset \cZ$).
\begin{enumerate}
\item\label{lem.dim-bas.surj}
  If $X' \subseteq X$ is $\wedge$-definable and
  each fiber of the restriction $f' := f|_{X'}\colon X' \to Y' := f(X')$ has dimension $n$,
  then $\dimcl X' = \dimcl Y' + n$.
\item\label{lem.dim-bas.def}
  For every $d \in \NN$, the set $\{y \in Y \mid \dimcl f^{-1}(y) \ge d\}$ is ($Z \cup \btup$)-$\wedge$-definable.
\end{enumerate}
\end{lem}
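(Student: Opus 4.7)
For part~(\ref{lem.dim-bas.surj}), my plan is to invoke additivity of rank in the matroid $\cl$. I would fix a small parameter set $A \supseteq Z \cup \btup$ over which $X$, $Y$, $f$ are definable and $X'$ is $\wedge$-definable, so that $\dimcl X' = \max\{\rk^\cl(\atup/A) : \atup \in X'\}$ and $\dimcl Y' = \max\{\rk^\cl(\ctup/A) : \ctup \in Y'\}$. For every $\atup \in X'$, setting $\ctup := f(\atup)$ gives $\ctup \in \cl(A \cup \atup)$ because $f$ is $A$-definable, and the standard matroid rank-additivity identity then collapses to
\[
\rk^\cl(\atup/A) = \rk^\cl(\ctup/A) + \rk^\cl(\atup/A\ctup).
\]
Choosing $\atup$ to realize $\dimcl X'$ and bounding the two summands by $\dimcl Y'$ and $\dimcl f'^{-1}(\ctup) = n$ respectively gives $\dimcl X' \le \dimcl Y' + n$. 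For the reverse inequality, I would first pick $\ctup \in Y'$ realizing $\dimcl Y'$, then pick $\atup \in f'^{-1}(\ctup)$ with $\rk^\cl(\atup/A\ctup) = n$ (possible since the fiber has dimension $n$); the same identity produces $\rk^\cl(\atup/A) = \dimcl Y' + n$, which gives the desired lower bound since $\atup \in X'$.

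For part~(\ref{lem.dim-bas.def}), my plan is to exhibit the set $R_d := \{(y, \atup) : \rk^\cl(\atup/(A \cup \{y\})) \ge d\}$, where $A := Z \cup \btup$, as $A$-$\wedge$-definable, and then to pass to the complement of $E_d := \{y : \dimcl f^{-1}(y) \ge d\}$ so that the existential quantifier on $\atup$ can be eliminated by saturation. First, $Z$-definability of $\cl$ says that for any small $C$ the set $\cl(C)$ is $C$-$\vee$-definable, so ``$a \notin \cl(C)$'' is $C$-$\wedge$-definable. I would unfold $\rk^\cl(\atup/Ay) \ge d$ as the finite disjunction, over $d$-element subtuples $(a_{i_1}, \dots, a_{i_d})$ of $\atup$, of the conjunctions
\[
\bigwedge_{k=1}^d a_{i_k} \notin \cl(A \cup \{y, a_{i_1}, \dots, a_{i_{k-1}}\}),
\]
and invoke the fact that a finite union of $\wedge$-definable sets is again $\wedge$-definable to conclude that $R_d$ is $A$-$\wedge$-definable in the variables $(y, \atup)$.

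To finish, I would describe $Y \setminus E_d$ as the set of $y$ for which the $(A \cup \{y\})$-$\wedge$-definable fiber $\{\atup : (y, \atup) \in R_d\}$ has empty intersection with $f^{-1}(y)$. Since $|A \cup \{y\}| \le \kappa$, the $\kappa^+$-saturation of $\cZ$ implies that this emptiness is already witnessed by some finite subfamily of the conjuncts that define $R_d$; each such witness yields a first-order condition on $y$ with parameters in $A$, so $Y \setminus E_d$ is a union of at most $\kappa$-many $A$-definable sets, hence $A$-$\vee$-definable, and taking complements gives the claim. The main technical point is precisely this saturation step: one must check that, as $y$ varies, the conjuncts defining $R_d$ come from a fixed family of $A$-formulas of size at most $\kappa$, which holds because $\cl$ is $Z$-definable via a fixed set $\Psi$ of $L(Z)$-formulas.
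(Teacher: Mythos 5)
Your proof is correct, but it takes a genuinely different (and much more self-contained) route than the paper, which simply cites \cite{For.dim}: part~(1) is \cite[Lemma~3.44]{For.dim} verbatim, and part~(2) is obtained by citing \cite[Remark~3.31]{For.dim} for $\wedge$-definability of $\{\atup : \rk^{\cl}(\atup/Z\btup)\ge d\}$ and then projecting this set along~$f$.

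For part~(1), your rank-additivity argument is sound, but it silently uses $\dcl\subseteq\cl$ when you assert $f(\atup)\in\cl(A\cup\atup)$. This is not one of the matroid axioms and not part of $Z$-definability; it does, however, follow from condition~(c) of $Z$-existentiality: if $a\in\dcl(B)\setminus\cl(B)$, applying~(c) with $C=\{a\}$ produces $a'$ with $\tp(a'/B)=\tp(a/B)$ and $a'\notin\cl(B\cup\{a\})$; since $a$ is the unique realisation of its type over $B$, we get $a\notin\cl(B\cup\{a\})$, contradicting $B\cup\{a\}\subseteq\cl(B\cup\{a\})$. You should make this step explicit, since without it the additivity identity $\rk(\atup/A)=\rk(\ctup/A)+\rk(\atup/A\ctup)$ does not collapse.

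For part~(2), your route via the complement and a saturation argument differs from the paper's projection, and is in fact more careful about a point the paper's sketch elides: $\dimcl f^{-1}(y)$ is computed over the parameter set $Z\btup y$ of the fibre, so the relevant rank is $\rk^{\cl}(\atup/Z\btup y)$, whereas projecting $\{\atup : \rk^{\cl}(\atup/Z\btup)\ge d\}$ along $f$ gives the generally strictly larger set $\{y : \exists\atup\in f^{-1}(y),\,\rk(\atup/Z\btup)\ge d\}$ (the discrepancy is $\rk(y/Z\btup)$, since $y=f(\atup)\in\cl(Z\btup\atup)$). Your construction works directly with $\rk^{\cl}(\atup/Ay)$ and so avoids this. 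The bookkeeping (the conjuncts coming from the fixed family $\Psi$ and parameter tuples from $A\cup\{y,\atup\}$, hence of cardinality $\le\kappa$), the use of saturation to witness emptiness of a small $\wedge$-definable set finitely, and the final passage from $\vee$-definability of $Y\setminus E_d$ to $\wedge$-definability of $E_d=Y\cap\bigl(\cZ^k\setminus(Y\setminus E_d)\bigr)$ are all correct.
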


\begin{proof}
(1) is contained in \cite[Lemma 3.44]{For.dim}.

(2) follows from \cite[Remark 3.31]{For.dim}, namely:
By that remark, the set $\{\atup \mid \rk^{\cl}(\atup/ Z \btup) \ge d\}$ is $(Z \cup \btup)$-$\wedge$-definable,
say by a conjunction of $L(Z \cup \btup)$-formulas $\phi_i(\xtup)$.
Then the set from (2) is defined by the conjunction of $\psi_i(y) := \exists \xtup: (\phi_i(\xtup) \wedge f(\xtup) = y)$.
\end{proof}

In Presburger Arithmetic, $\acl$ satisfies the exchange property (see e.g. \cite[\S3]{Clu.cell} using \cite{BPW.quasiOmin}),
so it is a matroid. Moreover, it is clearly $Z$-definable
in the sense of Definition~\ref{defn.matr2} (for any $Z \prec \cZ$) and $|Z|^+$-saturation of $\cZ$ implies $\acl(Z) \ne \cZ$. Finally, by Lemma~\ref{lem.dcl}, algebraically closed sets are elementary substructures of $\cZ$, so $\acl$ is $Z$-existential by Lemma~\ref{lem.exist}.
The corresponding dimension $\dim^{\acl}$ is the one that we already used in
Section~\ref{sect.bij-to-cube}.

Now consider the following variant: For any set $C \subseteq \cZ$, the ``localisation of $\acl$ at $C$''
$A \mapsto \acl(A \cup C)$ is still a matroid. In general, such a localisation is not $Z$-definable, but it is
if $C$ is $Z$-$\vee$-definable. Moreover, since $\acl(Z \cup A \cup C) \prec \cZ$ for any $A$,
it then also is $Z$-existential, provided that $\acl(Z \cup C) \ne \cZ$. In this way, we obtain a notion of dimension. We apply this as follows.

\begin{lem}\label{lem.isEx}
Let $H$ be a convex subgroup of $Z$, and write $H_{\cZ}$ for the convex closure of $H$ in $\cZ$. Then we have:
\begin{enumerate}
 \item $H_{\cZ}$ is $Z$-$\vee$-definable.
 \item For every $a \in \cZ \setminus H_{\cZ}$, the difference $[0, a) \setminus \acl (Z \cup H_{\cZ})$ is non-empty.
 \item The matroid $A \mapsto \acl(A \cup H_{\cZ})$ is $Z$-existential.
\end{enumerate}
\end{lem}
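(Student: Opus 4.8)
The plan is to verify the three items in order, with (1) and (3) being essentially bookkeeping and (2) carrying the real content. For item (1), $H_\cZ$ is the convex closure in $\cZ$ of the convex subgroup $H \subseteq Z$. Since $H = \bigcup_{h \in H, h \ge 0} [-h, h]$, its convex closure in $\cZ$ is $H_\cZ = \bigcup_{h \in H, h\ge 0} [-h, h]$ (the interval computed in $\cZ$); this is visibly a $Z$-$\vee$-definable set, being a union of $Z$-definable sets indexed by the small parameter set $H$. For item (3), assuming (2), we apply Lemma~\ref{lem.exist}: the localised closure operator $A \mapsto \acl(A \cup H_\cZ)$ is $Z$-definable because $H_\cZ$ is $Z$-$\vee$-definable (as noted in the discussion preceding the lemma), and $\acl(Z \cup A \cup H_\cZ) \prec \cZ$ for every $A$ by Lemma~\ref{lem.dcl} (applied to the parameter set $Z \cup A \cup H_\cZ$, noting $\acl$ of that set is still $\dcl$ of a set by the existence of definable Skolem functions in Presburger Arithmetic, or more directly since $\acl$ agrees with $\dcl$ up to finite extensions and Lemma~\ref{lem.dcl} still applies). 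The only remaining hypothesis of Lemma~\ref{lem.exist} is $\acl(Z \cup H_\cZ) \ne \cZ$, which is exactly the special case of (2) with, say, any $a$ in $\cZ$ bounding $Z$ from above (such $a$ exists by $|Z|^+$-saturation and lies outside $H_\cZ$ since $H_\cZ$, being a union of $Z$-bounded intervals, contains no such $a$).

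The heart of the matter is item (2): given $a \in \cZ \setminus H_\cZ$, we must find an element of $[0,a)$ outside $\acl(Z \cup H_\cZ)$. First I would use $|Z|^+$-saturation: since $a \notin H_\cZ$, for every $h \in H$ with $h \ge 0$ we have $a > h$, so the partial type asserting ``$h < x < a$ for all $h \in H_{\ge 0}$'' together with ``$x > c$ for all $c \in Z \cap H_\cZ$... '' — more carefully, I want a point $x \in [0,a)$ that simultaneously exceeds every element of $H_\cZ$ and is not algebraic over $Z \cup H_\cZ$. The clean approach: the partial type over $Z \cup H_\cZ$ saying $x < a$, $x > b$ for every $b \in H_\cZ$, and $x \notin E$ for every finite $Z \cup H_\cZ$-definable set $E$, is consistent — consistency of the first two families follows because $a$ exceeds all of $H_\cZ$ (here one uses that in a $\ZZ$-group the interval $[\sup H_\cZ, a)$, which makes sense formally as the definable set $\{x : x \ge b_0, x < a\}$ for $b_0$ any fixed element, is infinite, since $a - b_0 \notin H$ means $a - b_0$ is larger than every natural number), and adding the non-algebraicity conditions stays consistent because an infinite definable set cannot be covered by finitely many finite sets. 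By $|Z \cup H_\cZ|^+$-saturation (and $|H_\cZ| \le |Z| \le \kappa$, so this is covered by our saturation assumption on $\cZ$) this type is realised by some $x \in \cZ$, and such an $x$ lies in $[0,a) \setminus \acl(Z \cup H_\cZ)$, as desired.

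The main obstacle I anticipate is the consistency argument in (2): one must be careful that ``$x > b$ for all $b \in H_\cZ$ and $x < a$'' really does describe an infinite set, i.e.\ that $a$ is not merely an upper bound for $H_\cZ$ but is ``infinitely far'' above it in the relevant sense. This is where the hypothesis that $H$ is a \emph{convex subgroup} (not just a convex set) is used: if $b_0 \in Z$ is any fixed nonnegative element of $H_\cZ$, then $a - b_0 \notin H$ (since $H$ is a subgroup and $a \notin H_\cZ$), hence $a - b_0 > n$ for all $n \in \NN$; combined with the fact that $\ZZ$-groups are discretely ordered with $Z$-definable successor, the definable set $\{x : b_0 \le x < a\}$ has no finite bound on its size, so it is infinite in any $|Z|^+$-saturated model. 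Once this is pinned down, everything else is routine type-theoretic manipulation and appeals to Lemma~\ref{lem.exist} and Lemma~\ref{lem.dcl}.
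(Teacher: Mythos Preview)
Your proofs of (1) and (3) are fine and match the paper's approach. The problem is in (2), specifically the parenthetical claim ``$|H_{\cZ}| \le |Z| \le \kappa$''. This is false in general: $H_{\cZ}$ is the convex closure of $H$ computed in $\cZ$, not in $Z$. Whenever $H$ contains a nonstandard positive $h$ (i.e.\ whenever $H \ne \ZZ$, since any nontrivial convex subgroup of a $\ZZ$-group contains $\ZZ$), the interval $[-h,h] \subseteq \cZ$ already has cardinality exceeding $\kappa$ by $\kappa^+$-saturation. So your partial type, which as written uses parameters from all of $Z \cup H_{\cZ}$ (through the conditions ``$x \notin E$'' for every finite $Z\cup H_{\cZ}$-definable $E$), is over a parameter set that need not be small, and the saturation hypothesis on $\cZ$ does not guarantee a realisation.

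The paper closes this gap by first \emph{computing} $\acl(Z \cup H_{\cZ})$ explicitly: using $\acl = \dcl$ (linear order) and piecewise linearity of definable functions (Corollary~\ref{lem.lin}), one gets $\acl(Z \cup H_{\cZ}) = Z + H_{\cZ}$, and this is $Z$-$\vee$-definable as $\bigcup_{z \in Z,\, h \in H \cap N} [z-h, z+h]$, a union indexed by the genuinely small set $Z \times (H \cap N)$. Now $[0,a) \setminus \acl(Z \cup H_{\cZ})$ is $\wedge$-definable over $Z \cup \{a\}$, and finite satisfiability is checked by passing to the quotient $\cZ/H_{\cZ}$: each $[z-h,z+h]$ maps to a point, while the image of $[0,a)$ is infinite since $\cZ/H_{\cZ}$ is divisible and $a \notin H_{\cZ}$. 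Your overall compactness strategy is the right idea, but it needs this reduction of the parameter set to go through; without it, the saturation step is not justified.
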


\begin{proof}
(1) $H_{\cZ}$ is the union of the sets $[-a, a]$ for $a \in H$.

(2) Using that $\acl = \dcl$ (since we have a linear order)
and that definable functions are piecewise linear (Corollary~\ref{lem.lin}), we obtain
that $\acl(Z \cup H_{\cZ})$ is the divisible hull of $Z + H_{\cZ}$ in $\cZ$. Since both $Z$ and $H_{\cZ}$ are relatively divisible in $\cZ$,
we have $\acl(Z \cup H_{\cZ}) = Z + H_{\cZ}$. This set is $Z$-$\vee$-definable: It is the union of the sets $[z - h, z + h]$ for $z \in Z, h \in H \cap N$.
Now consider the $\wedge$-definable set $Y := [0, a) \setminus (Z + H_{\cZ})$;
our goal is to prove $Y \ne \emptyset$. By saturation of $\cZ$, it suffices to show finite satisfiability.
This finite satisfiability follows from looking at 
the quotient $\cZ/H_{\cZ}$:
Each $[z - h, z + h]$ gets send to a singleton in this quotient, whereas the image of $[0, a)$ is infinite,
since $\cZ/H_{\cZ}$ is divisible and $a \notin H_{\cZ}$.

(3) By (1) and $|Z|^+$-saturatedness of $\cZ$, an $a$ as in (2) exists. Thus $\acl(Z \cup H_{\cZ}) \ne \cZ$
and hence the arguments from above the lemma apply.
\end{proof}

\begin{defn}\label{defn.dim}
Let $\csg$ be the set of all convex subgroups of $Z$ except $\{0\}$ (but including $Z$ itself). For $H \in \csg$, let $H_{\cZ}$ be the
convex closure of $H$ in $\cZ$ and define
$\dim_H$ be the dimension function obtained from $H_{\cZ}$ as described right above,
i.e., by applying Definition~\ref{defn.dim.tp} to the matroid $A \mapsto \acl(A \cup H_{\cZ})$.
For a $Z$-$\wedge$-definable set $X$, we define
$\dim_*(X) := (\dim_H(X))_H \in \dimspace$.
On $\dimspace$, we consider the natural partial order:
$(d_H)_H \le (d'_H)_H$ iff $d_H \le d'_H$ for all $H \in \csg$.
\end{defn}

As examples, note that $\dim_\ZZ = \dim^{\acl}$ counts ``dimensions of infinite cardinality'',
whereas $\dim_Z$ counts ``dimensions of unboundedness''; in particular, $\dim_Z(X) = 0$ iff $X$ is bounded (cf.\ Remark~\ref{rem.bd}).
Also note that for $H, H' \in \csg$ with $H \subseteq H'$, we have $\dim_H \ge \dim_{H'}$. To get used to this definition, we
prove:

\begin{lem}\label{lem.dim.int}
For $H \in \csg$ and $a \in N$, the dimension $\dim_H [0, a)$ is $0$ if $a \in H$ and $1$ otherwise.
\end{lem}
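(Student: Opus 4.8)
The statement to prove is that for $H \in \csg$ and $a \in N$, we have $\dim_H[0,a) = 0$ if $a \in H$ and $\dim_H[0,a) = 1$ otherwise. Recall that $\dim_H$ is, by Definition~\ref{defn.dim}, the dimension attached to the matroid $\cl_H\colon A \mapsto \acl(A \cup H_{\cZ})$; by Definition~\ref{defn.dim.tp} this means $\dim_H[0,a) = \max\{\rk^{\cl_H}(x/B) \mid x \in [0,a)\}$ for any small $B \supseteq Z$ (e.g.\ $B = Z$ if $a \in Z$; in general take $B$ to contain $a$). Since $[0,a) \subseteq \cZ$, every element $x$ has $\rk^{\cl_H}(x/B) \le 1$, so the only question is whether the maximum is $0$ or $1$, i.e.\ whether $[0,a)$ is entirely contained in $\cl_H(B) = \acl(B \cup H_{\cZ})$ or not.

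First I would dispose of the case $a \in H$. Then $a \in H \subseteq H_{\cZ}$, so by convexity $[0,a) \subseteq [0,a] \subseteq H_{\cZ} \subseteq \acl(B \cup H_{\cZ}) = \cl_H(B)$. Hence every $x \in [0,a)$ satisfies $\rk^{\cl_H}(x/B) = 0$, so $\dim_H[0,a) = 0$ (with the convention that the empty set, occurring when $a = 0$, has dimension $-\infty \le 0$, but one should note $a \in H$ with $a \ge 0$ allows $a = 0$; since $\{0\} \notin \csg$ this is harmless, but I would remark that $[0,0) = \emptyset$ has dimension $-\infty$, which we may also regard as ``$0$'' in the loose sense of the statement, or simply observe $a = 0 \in H$ trivially). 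The substantive case is $a \notin H$, equivalently (since $H$ is convex and $0 \le a$) $a \notin H_{\cZ}$ — here I would invoke Lemma~\ref{lem.isEx}(2) directly: it states precisely that for $a \in \cZ \setminus H_{\cZ}$, the set $[0,a) \setminus \acl(Z \cup H_{\cZ})$ is non-empty. Taking any $x$ in this difference, with $B = Z$ (legitimate since $a \in Z$, so $[0,a)$ is $Z$-definable and $Z$ is a valid small parameter set), we get $x \in [0,a)$ with $x \notin \cl_H(Z)$, hence $\rk^{\cl_H}(x/Z) \ge 1$, so $\dim_H[0,a) \ge 1$; combined with the upper bound $\le 1$ from $\dim_H \cZ = 1$ (Remark~\ref{rem.dim-bas}) we conclude $\dim_H[0,a) = 1$.

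I do not expect any real obstacle here: the lemma is essentially an unwinding of the definitions plus a citation of Lemma~\ref{lem.isEx}(2), which was manifestly set up for exactly this purpose. The only point requiring a moment's care is the bookkeeping about which parameter set $B$ to use in Definition~\ref{defn.dim.tp} and the observation (from Remark~3.30 of \cite{For.dim}, quoted after Definition~\ref{defn.dim.tp}) that $\dim_H$ does not depend on that choice, so that using $B = Z$ is harmless even though $[0,a)$ for $a \in N$ is automatically $Z$-definable. I would also double-check the trivial edge case $a = 0$ (so $[0,a) = \emptyset$), noting that it only arises under $a \in H$ and is consistent with the claimed value $0$ under the usual convention $\dim \emptyset = -\infty$.
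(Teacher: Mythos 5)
Your proof is correct and follows the same route as the paper: $[0,a)\subseteq H_{\cZ}$ handles the case $a\in H$, and Lemma~\ref{lem.isEx}(2) supplies the witness for $a\notin H$. You simply fill in a few more details than the paper's two-line argument (the parameter-set bookkeeping, the upper bound via Remark~\ref{rem.dim-bas}, and the $a=0$ edge case).
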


\begin{proof}
The case $a \in H$ follows from $[0, a) \subseteq H_{\cZ}$.
If $a \notin H$, then Lemma~\ref{lem.isEx} (2) yields a $b \in [0, a) \setminus \acl(Z \cup H_{\cZ})$;
such a $b$ witnesses the dimension being $1$.
\end{proof}

\begin{notn}
Given a type $p \in S_n(Z)$, we write $\dimcl(p) := \dimcl(p(\cZ))$, and similarly $\dim_*(p) := \dim_*(p(\cZ))$.
\end{notn}

The tuple $\dim_*$ of dimensions is a good invariant when applied to complete types, but for definable sets, it
misses out some information:

\begin{exa}
Choose $a \in N \setminus \NN$ and consider
the definable sets $X_1 := \cN \times [0, a)$ and $X_2 := (\cN \times \{0\}) \cup [0,a)^2$.
Then $\dim_\ZZ(X_1) = \dim_\ZZ(X_2) = 2$ and $\dim_Z(X_1) = \dim_Z(X_2) = 1$, but there is no definable
bijection between $X_1$ and $X_2$, since only $X_1$ contains elements $\btup$ which satisfy both,
$\dim_\ZZ(\tp(\btup/Z)) = 2$ and $\dim_Z(\tp(\btup/Z)) = 1$ simultaneously
(namely, $\btup = (b_1, a-1)$ for $b_1$ outside of the convex closure of $Z$.)
\end{exa}

This example motivates the following, rather technical improved notion of dimension.

\begin{defn}\label{defn.mdim}
For a $Z$-definable set $X \subseteq \cZ^n$,
define the \emph{multidimension} $\mdim(X)$ to be the set of
all maximal elements of the set
\[
M_X := \{\dim_*(p) \mid p \in S_n(Z), p(\cZ) \subseteq X\} \subseteq \dimspace
\]
(i.e., $d \in \mdim(X)$ iff $d \in M_X$ and there is no $d' \in M_X$ with $d' > d$).
\end{defn}

Before we prove that this definition is reasonable (in particular that the above set $M_X$ does have
maximal elements at all), we note that the multidimension is preserved under definable bijections.

\begin{lem}
If $X$ and $Y$ are $Z$-definable and there exists a definable bijection $X \to Y$,
then $\mdim(X) = \mdim(Y)$.
\end{lem}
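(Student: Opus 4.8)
The plan is to deduce $\mdim(X) = \mdim(Y)$ from the stronger statement that a definable bijection $f\colon X \to Y$ (say with $X \subseteq \cZ^n$, $Y \subseteq \cZ^m$) induces a bijection between the set of types $\{p \in S_n(Z) \mid p(\cZ) \subseteq X\}$ and $\{q \in S_m(Z) \mid q(\cZ) \subseteq Y\}$ which preserves $\dim_*$. Since $M_X$ and $M_Y$ are by definition exactly the images of $\dim_*$ on these two sets of types, this at once gives $M_X = M_Y$, and hence $\mdim(X)$ and $\mdim(Y)$, being the sets of maximal elements of $M_X$ and $M_Y$, coincide.

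First I would reduce to the case where $f$ is $Z$-definable: the argument behind Remark~\ref{rem.bij} applies verbatim, since $X$ and $Y$ are $Z$-definable. Then $f\1$ is $Z$-definable as well, its graph being obtained from the graph of $f$ by permuting coordinates. Hence for $\atup \in X$ with $\btup := f(\atup)$ we have $\btup \in \dcl(Z \cup \atup)$ and $\atup \in \dcl(Z \cup \btup)$, so $\dcl(Z \cup \atup) = \dcl(Z \cup \btup)$; moreover, since $f$ is given by a fixed $L(Z)$-formula, $\tp(\btup/Z)$ is determined by $\tp(\atup/Z)$, so $p \mapsto p^f := \tp(f(\atup)/Z)$ (for any $\atup \models p$) is well-defined, and the symmetric construction with $f\1$ is an inverse for it. Using $Z$-definability of $X$ and $Y$ one checks that $p(\cZ) \subseteq X \iff p^f(\cZ) \subseteq Y$, so $p \mapsto p^f$ restricts to the desired bijection.

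Next I would verify that $\dim_*(p) = \dim_*(p^f)$ for every such $p$. Fix $H \in \csg$ and let $\cl_H$ be the matroid $A \mapsto \acl(A \cup H_{\cZ})$, which is $Z$-existential by Lemma~\ref{lem.isEx}, so that $\dim_H$ is defined. For $\atup \models p$ and $\btup := f(\atup) \models p^f$, the equality $\dcl(Z \cup \atup) = \dcl(Z \cup \btup)$ yields $\cl_H(Z \cup \atup) = \acl(Z \cup \atup \cup H_{\cZ}) = \acl(Z \cup \btup \cup H_{\cZ}) = \cl_H(Z \cup \btup)$. Now $\rk^{\cl_H}(\cdot/Z)$ depends only on this $\cl_H$-closure: an inclusion-maximal $\cl_H$-independent-over-$Z$ subtuple of $\atup$ spans $\cl_H(Z \cup \atup)$ by the exchange property, and any two $\cl_H$-independent-over-$Z$ tuples with the same closure have equal length. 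Hence $\rk^{\cl_H}(\atup/Z) = \rk^{\cl_H}(\btup/Z)$, i.e.\ $\dim_H(p) = \dim_H(p^f)$, and since $H$ was arbitrary, $\dim_*(p) = \dim_*(p^f)$. (Alternatively, one invokes that $\rk^{\cl_H}(\atup/Z)$ is determined by $\tp(\atup/Z)$, cf.\ \cite[Remark~3.31]{For.dim}, together with the fact just noted that $\tp(\btup/Z)$ is determined by $\tp(\atup/Z)$.)

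Combining these, $M_X = \{\dim_*(p) \mid p(\cZ) \subseteq X\} = \{\dim_*(p^f) \mid p(\cZ) \subseteq X\} = \{\dim_*(q) \mid q(\cZ) \subseteq Y\} = M_Y$, whence $\mdim(X) = \mdim(Y)$. The only step carrying any content is the matroid-theoretic fact that $\rk^{\cl_H}(\cdot/Z)$ is an invariant of the $\cl_H$-closure over $Z$; this is the expected (mild) obstacle, but it follows routinely from the exchange property of $\cl_H$, and everything else is bookkeeping about $\dcl$ and types.
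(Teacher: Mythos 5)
Your proof is correct and takes essentially the same route as the paper's: applying dimension-preservation type by type, where a $Z$-definable bijection (obtainable as in Remark~\ref{rem.bij}) carries each complete type over $Z$ inside $X$ to a complete type inside $Y$ of the same $\dim_*$. You re-derive the type-level dimension-preservation directly from the matroid exchange property, whereas the paper simply cites Lemma~\refi{lem.dim-bas}{surj} (in turn \cite[Lemma~3.44]{For.dim}); otherwise the arguments coincide.
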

\begin{proof}
This follows from Lemma~\refi{lem.dim-bas}{surj}, applied separately to each complete type in $X$ (and its image in $Y$).
\end{proof}

To understand multidimensions, we use that any $X$ is in definable bijection to a finite union of cuboids.

\begin{lem}\label{lem.mdim-ex}
For any non-empty $Z$-definable set $X$, $\mdim(X)$ is a finite, non-empty set.
Moreover, for any element $d'$ of the set $M_X$ from Definition~\ref{defn.mdim}, there exists
an element $d \in \mdim(X)$ with $d' \le d$.

More specifically, if $X = [0, a_1) \times \dots \times [0, a_n)$ for some
$a_1, \dots, a_n \in \Ni \setminus \{0\}$, then
$\mdim(X) = \{\dim_*(X)\} = \{(d_H)_H\}$, where $d_H := \#\{i \le n \mid a_i \notin H\}$.
\end{lem}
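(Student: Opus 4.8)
Here is my plan for proving Lemma~\ref{lem.mdim-ex}.

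\medskip

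The plan is to first reduce the general case to the case of a single cuboid. By Proposition~\ref{prop.bij-to-cube} and the preceding lemma (invariance of $\mdim$ under definable bijections), any non-empty $X$ is in definable bijection to a finite disjoint union $\bigcup_{j} C_j$ of cuboids, so $\mdim(X) = \mdim(\bigcup_j C_j)$. The set $M_X$ is then the union of the sets $M_{C_j}$: any type $p$ with $p(\cZ) \subseteq X$ concentrates on one of the $C_j$ by a standard argument (a type is a filter, so it cannot be split by a finite partition; this is where one uses that the $C_j$ are disjoint and definable). Hence $\mdim(X)$ is obtained from $\bigcup_j M_{C_j}$ by taking maximal elements, and once we know each $M_{C_j}$ is finite with all its elements dominated by a maximum, the same follows for the union (finiteness is clear; domination is clear since the maximum of one $M_{C_j}$ lies in the union). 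So everything comes down to analysing $M_C$ for $C = [0,a_1) \times \dots \times [0,a_n)$.

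\medskip

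For a single cuboid $C$, the claim is that $M_C$ has a single maximal element, namely $\dim_*(C) = (d_H)_H$ with $d_H = \#\{i \le n \mid a_i \notin H\}$. First I would compute $\dim_*(C)$ itself: by Lemma~\refi{lem.dim-bas}{surj} applied to the coordinate projections, $\dim_H(C) = \sum_i \dim_H[0,a_i)$, and by Lemma~\ref{lem.dim.int} each summand is $1$ if $a_i \notin H$ and $0$ otherwise, giving the stated formula. This $\dim_*(C)$ is realized by a type: choose, for each coordinate $i$ with $a_i \notin H$ for the relevant $H$, an element of $[0,a_i)$ outside the appropriate algebraic closure (using Lemma~\ref{lem.isEx}~(2) and saturation), so that the resulting tuple $\btup \in C$ has $\dim_*(\tp(\btup/Z)) = \dim_*(C)$; then $\dim_*(C) \in M_C$. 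Concretely, one wants a single tuple simultaneously generic for \emph{all} the relevant matroids, which exists by saturation since there are only $|\csg| \le |Z|$ many convex subgroups to avoid. Conversely, for \emph{any} type $p$ with $p(\cZ) \subseteq C$, one has $p(\cZ) \subseteq C$ so $\dim_H(p) \le \dim_H(C)$ for every $H$ (monotonicity of dimension, Remark~\ref{rem.dim-bas}), hence $\dim_*(p) \le \dim_*(C)$. This shows $\dim_*(C)$ is the unique maximal element of $M_C$ and that every element of $M_C$ is $\le$ it, which is exactly the ``moreover'' and ``more specifically'' parts; finiteness and non-emptiness of $\mdim(C) = \{\dim_*(C)\}$ are then immediate.

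\medskip

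The main obstacle I anticipate is the construction of a type realizing $\dim_*(C)$ — more precisely, checking that one can pick a tuple $\btup \in C$ that is \emph{simultaneously} generic over $Z$ for every localized matroid $A \mapsto \acl(A \cup H_\cZ)$ with $a_i \notin H$. One builds $\btup = (b_1, \dots, b_n)$ coordinate by coordinate, at step $i$ choosing $b_i \in [0, a_i)$ avoiding $\acl(Z \cup H_\cZ \cup \{b_1,\dots,b_{i-1}\})$ for every $H$ with $a_i \notin H$; this is possible by Lemma~\ref{lem.isEx}~(2) applied after localizing further at $b_1,\dots,b_{i-1}$ (which keeps the matroid $Z$-existential by the remarks before Lemma~\ref{lem.isEx}, since adding finitely many parameters to $Z$ keeps $\acl$ of that set an elementary substructure), together with saturation to realize finitely many such ``avoidance'' conditions at once. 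One should double-check that the rank of such a $\btup$ in the matroid $\acl(- \cup H_\cZ)$ is then exactly $d_H$ and not less — i.e., that adding generic coordinates strictly increases rank — which again is Lemma~\refi{lem.dim-bas}{surj} (or a direct exchange-property argument). The rest is bookkeeping.
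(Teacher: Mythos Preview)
Your proposal is correct and follows essentially the paper's outline: reduce to a single cuboid via Proposition~\ref{prop.bij-to-cube}, show that every type in the cuboid satisfies $\dim_* \le (d_H)_H$ (the paper does this coordinatewise on a realization $\btup$; you do it via $\dim_H(p) \le \dim_H(C)$, which amounts to the same thing), and then exhibit a tuple $\btup \in C$ with $\dim_*(\tp(\btup/Z)) = (d_H)_H$.

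The only substantive difference is in this last construction. You build $\btup$ coordinate by coordinate, asking each $b_i$ to be generic for \emph{every} matroid $H$ with $a_i \notin H$ simultaneously, via saturation. For this to go through you need the observation that convex subgroups are linearly ordered, so any finite set of avoidance conditions collapses to the single one for the largest such $H$; you should make this explicit, since Lemma~\ref{lem.isEx}~(2) by itself only handles one $H$ at a time. The paper instead orders the coordinates as $a_1 \ge \dots \ge a_n$, inducts on $n$, and at the inductive step picks $b_n$ generic for just \emph{one} matroid, namely the one coming from the largest convex subgroup $H_0$ not containing $a_n$; the ordering then guarantees $d_H = d_{H_0} = n$ for all $H \subseteq H_0$, while for $H \supsetneq H_0$ the new coordinate contributes nothing to $d_H$ anyway. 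Both arguments work; the paper's sidesteps the saturation appeal entirely.
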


\begin{proof}
First consider the case that $X$ is a cuboid, $X = [0, a_1) \times \dots \times [0, a_n)$. Let $M_X$ be as in Definition~\ref{defn.mdim} and set
$d := (d_H)_H \in \dimspace$ for $d_H$ as in the lemma.
We prove two claims for the cuboid $X$: (a) $d \in M_X$, and (b) $d' \le d$ for every $d' \in M_X$. Since any $Z$-definable set is in definable bijection to a finite union of cuboids, the full lemma will follow after proving these claims.

We start by proving (b). To this end, consider
an arbitrary $\btup = (b_1, \dots, b_n) \in X$; we have to check that
for every $H \in \csg$, we have $\dim_H(\tp(\btup/Z)) \le d_H$.
For any $i$ with $a_i \in H$, we have $b_i \in H_{\cZ}$ and hence
$\dim_H \tp(b_i/Z) = 0$. Since for all other $i$, we have $\dim_H \tp(b_i/Z) \le 1$,
the claim follows (using Lemma~\refi{lem.dim-bas}{surj}).

To prove (a), we need to find $\btup \in X$ with $\dim_*(\tp(\btup/Z)) = d$.
We assume $a_1 \ge \dots \ge a_n$, and we proceed by induction on $n$, i.e., we assume
that we already have a tuple $\btup' \in X' := [0, a_1) \times \dots \times [0, a_{n-1})$ with the right $\dim_*(\tp(\btup'/Z))$.

Let $H_0$ be the largest convex subgroup of $Z$ not containing $a_n$.
(This exists: it consists of all those $z \in Z$ for which $a_n$ does not lie in the convex closure of the group generated by $z$.)
Then $\dim_{H_0} [0, a_n) = 1$ by Lemma~\ref{lem.dim.int}.
By the definition of dimension (and since we are allowed to choose our parameter set to be $Z \cup \btup'$),
there exists a $b_n \in [0, a_n)$ such that $\rk(b_n/Z\btup') = 1$. Thus, for $\btup := \btup' b_n \in X$,
we have $\dim_{H_0}(\tp(\btup/Z)) = \rk(\btup/Z) =\rk(b_n/Z\btup') + \rk(\btup'/Z) = \dim_{H_0}(\tp(\btup'/Z)) + 1 = d_{H_0}$,
where the rank is the one corresponding to the matroid obtained from $H_0$.

Since by (b), $\dim_*(\bar b) \le d$,
it remains to prove that $\dim_H (\tp(\btup/Z)) \ge d_H$ for every $H \in \csg$.
If $H \supseteq H_0$, then $d_H$ is the same for $X$ and for $X'$, so $\dim_H (\tp(\btup/Z)) \ge \dim_H (\tp(\btup'/Z)) = d_H$.
If, on the other hand, $H \subseteq H_0$, then $\dim_H (\tp(\btup/Z)) \ge \dim_{H_0} (\tp(\btup/Z)) = d_{H_0}$, and this is equal to $d_H$ by our assumption that $a_n \le a_i$ for all $i$.
\end{proof}

\begin{rem}\label{rem.bd}
In particular, using this (and that definable bijections preserve boundedness), one obtains that
a $Z$-definable set $X$ is bounded iff $\dim_Z(X) = 0$.
\end{rem}

\begin{lem}\label{lem.dim-union}
If $X = X_1 \cup X_2$ are $Z$-definable sets, then $\mdim(X)$ is equal to the maximal elements
of the (finite) set $\mdim(X_1) \cup \mdim(X_2)$. In particular, if $X$ is (in definable bijection to)
a disjoint union of cuboids $C_1, \dots, C_\ell$, then
$\mdim(X)$ is equal to the maximal elements of the set $\{\dim_* C_1, \dots, \dim_* C_\ell\}$.
\end{lem}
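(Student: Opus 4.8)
The plan is to reduce everything to the definition of $\mdim$ via the set $M_X$ of dimension-tuples of complete types, and to observe that $M_X$ behaves well under unions. First I would note that a complete type $p \in S_n(Z)$ with $p(\cZ) \subseteq X = X_1 \cup X_2$ must satisfy $p(\cZ) \subseteq X_1$ or $p(\cZ) \subseteq X_2$: indeed, if $X_j$ is defined by $\phi_j$, then the partial type $p \cup \{\neg\phi_1, \neg\phi_2\}$ is inconsistent because $p(\cZ) \subseteq X$, so by completeness of $p$ either $\phi_1 \in p$ or $\phi_2 \in p$. Hence $M_X = M_{X_1} \cup M_{X_2}$.

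Next I would extract from Lemma~\ref{lem.mdim-ex} the purely order-theoretic fact that for each $i$, every element of $M_{X_i}$ lies below some element of $\mdim(X_i)$, and $\mdim(X_i)$ is exactly the set of maximal elements of $M_{X_i}$ (which is finite and non-empty when $X_i \neq \emptyset$). Combining this with $M_X = M_{X_1} \cup M_{X_2}$, every maximal element of $M_X$ is maximal in $M_{X_1}$ or in $M_{X_2}$, hence lies in $\mdim(X_1) \cup \mdim(X_2)$; conversely every element of $\mdim(X_1) \cup \mdim(X_2)$ lies in $M_X$ and thus below some maximal element of $M_X$. Therefore the set of maximal elements of $M_X$ coincides with the set of maximal elements of the finite set $\mdim(X_1) \cup \mdim(X_2)$, which is precisely the assertion (handling the trivial cases where $X_1$ or $X_2$ is empty separately).

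For the "in particular" clause, I would apply the first part inductively to $X = C_1 \cup \dots \cup C_\ell$ together with Lemma~\ref{lem.mdim-ex}, which gives $\mdim(C_j) = \{\dim_* C_j\}$ for each cuboid $C_j$; an easy induction on $\ell$ then yields that $\mdim(X)$ is the set of maximal elements of $\{\dim_* C_1, \dots, \dim_* C_\ell\}$. Since $\mdim$ is invariant under definable bijections by the lemma preceding Lemma~\ref{lem.mdim-ex}, the same holds whenever $X$ is merely in definable bijection to such a disjoint union.

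The main obstacle is the first observation — that a complete type contained in a union is contained in one of the pieces — but this is immediate from completeness once phrased correctly; the rest is routine order-theoretic bookkeeping about maximal elements of finite subsets of the partially ordered set $\dimspace$. The only point requiring a little care is keeping track of the edge cases $X_1 = \emptyset$ or $X_2 = \emptyset$, where $\mdim$ of the empty set should be treated as $\emptyset$ so that "maximal elements of $\mdim(X_1) \cup \mdim(X_2)$" still makes sense.
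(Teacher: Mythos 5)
Your proposal is correct and takes essentially the same route as the paper, whose proof is simply the one-line assertion that the statement follows from the definition of $\mdim$ and Lemma~\ref{lem.mdim-ex}. You supply the details: the observation that a complete type whose locus is contained in $X_1 \cup X_2$ must have its locus in one of $X_1$ or $X_2$ (giving $M_X = M_{X_1} \cup M_{X_2}$), and the order-theoretic bookkeeping on maximal elements made possible by the fact (from Lemma~\ref{lem.mdim-ex}) that every element of $M_{X_i}$ lies below some element of the finite set $\mdim(X_i)$.
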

\begin{proof}
This follows from the definition of $\mdim$ and Lemma~\ref{lem.mdim-ex}.
\end{proof}

We naturally extend the partial order on $\dimspace$ to multidimensions:

\begin{defn}
Given $D, D' \subseteq \dimspace$, we write $D \le D'$ iff for every $d \in D$, there exists a $d' \in D'$ with $d \le d'$.
\end{defn}

\begin{rem}\label{rem.subset}
From Lemma~\ref{lem.dim-union}, we deduce: For $Z$-definable sets $X \subseteq X'$, we have $\mdim X \le \mdim X'$.
\end{rem}

\subsection{Purely unbounded sets}

With multidimension and hyper-cardinality together, we now have all the ingredients needed to distinguish
any definable sets that are not in definable bijection. However, the most naive approach -- using hyper-cardinality for bounded sets and
multidimension for unbounded ones -- does not work, due to some definable sets being a union of a bounded and an unbounded one in an
essential way.

\begin{exa}
Choose $a \in N \setminus \NN$ and consider
the sets $X_1 = \cN \dcup [0,a)^2$ and $X_2 = \cN \dcup [0,a)^2 \dcup [0,a)^2$. They are both unbounded, have the same multidimension, but
they are not in definable bijection, since, as we shall see later (Corollary~\ref{cor.cancel}) this would imply that
also $\cN$ and $\cN \dcup [0,a)^2$ are in definable bijection. (But those sets have different multidimension.)
\end{exa}

The solution is to decompose each definable set into a bounded and a ``purely unbounded'' piece.

\begin{defn}\label{defn.purely-u}
We call a definable set $X$ \emph{purely unbounded} if
every $(d_H)_H \in \mdim(X)$ satisfies
$d_Z \ge 1$.
\end{defn}

Note that a cuboid is always either bounded or purely unbounded.

In the following, $\Phi$ is the map $\Sym \Ni \to \Ks$ from Theorem~\ref{thm.groth}.
We use it to transfer some definitions from definable sets to elements of $\Sym \Ni$.

\begin{defn}
Suppose that $\Phi(a) = [X]$ for some $a \in \Sym\Ni$ and some $Z$-definable set $X$.
We define the \emph{multidimension} of $a$ to be $\mdim (a) := \mdim X$. If $\mdim (a)$ is a singleton,
we denote the unique element of that set by $\dim_* (a)$.
We say that $a$ is \emph{purely unbounded} iff $X$ is purely unbounded.
\end{defn}

Those notions are well-defined since $\mdim X$ depends on $X$ only up to definable bijection.

\begin{rem}\label{rem.dim-union}
We can reformulate Lemma~\ref{lem.dim-union} in terms of $\Sym \Ni$: For $a, a' \in \Sym \Ni$, $\mdim(a + a')$ is the
set of maximal elements of the union $\mdim(a) \cup \mdim(a')$ and if $a$ is a product of elements of $\Ni$, then
$\mdim (a)$ is the singleton described by Lemma~\ref{lem.mdim-ex}. In particular,
if an arbitrary $a \in \Sym \Ni$ is given as a sum $a = a_1 + \dots + a_\ell$, where each $a_i$ is a product of elements of $\Ni$, then
$\mdim (a)$ is the set of maximal elements of $\{\dim_*(a_1), \dots, \dim_*(a_\ell)\}$.
\end{rem}

\begin{rem}\label{rem.pub}
We would like to say that an element $a \in \Sym \Ni$ is purely unbounded iff it can be written as a sum
$a = a_1 + \dots + a_\ell$ of products of elements of $\Ni$ in such a way that each $a_i$ has $\infty$ as a factor.
The implication ``$\Leftarrow$'' is clear. The other direction follows from Theorem~\ref{thm.groth}:
If $\Phi(a) = [X]$ for $X$ purely unbounded, then without loss, $X$ is a disjoint union of unbounded cuboids and
hence $\Phi\1([X])$ can be written as desired.
However, as long as we are still working on the proof of Theorem~\ref{thm.groth}, we cannot yet use this implication.
\end{rem}

\begin{lem}\label{lem.unbound-eat}
Let $a$ and $a'$ be two elements of $\Sym \Ni$. Suppose that
$a'$ is purely unbounded and that $\mdim(a) \le \mdim(a')$. Then $a'$ eats $a$, i.e., $a' + a = a'$ (see Definition~\ref{defn.eats}).
\end{lem}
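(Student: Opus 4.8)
The plan is to reduce to the case where both $a$ and $a'$ are products of elements of $\Ni$, and then apply Lemma~\ref{lem.eat}. First I would write $a' = a'_1 + \dots + a'_m$ and $a = a_1 + \dots + a_\ell$, where each $a'_j$ and each $a_i$ is a product of elements of $\Ni$; this is possible since $\Sym\Ni$ is generated as a semiring by $\Ni$. By Remark~\ref{rem.dim-union}, $\mdim(a')$ is the set of maximal elements of $\{\dim_*(a'_1), \dots, \dim_*(a'_m)\}$, and similarly for $a$. Since $a'$ is purely unbounded, every maximal $\dim_*(a'_j)$ has $Z$-component $\ge 1$; but in fact I claim we may assume \emph{every} summand $a'_j$ has $\dim_*(a'_j)$ with $Z$-component $\ge 1$ — indeed, if some $a'_j$ were bounded (i.e.\ had $Z$-component $0$), it would be dominated by a purely unbounded summand $a'_{j_0}$ in the partial order, and after replacing $a'_{j_0}$ by $a'_{j_0} + a'_j$ (which, once we've proven the special case below, doesn't change $a'$ because a purely-unbounded product eats anything it dominates) we could drop $a'_j$. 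To avoid circularity I would instead argue directly: discard the bounded summands of $a'$ at the very end, or simply observe that it suffices to prove $a' + a = a'$ in the larger sum, and a bounded summand of $a'$ causes no harm. So after this cleanup, assume each $a'_j$ has $\infty$ as a factor.

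Next, since $a + a' = \sum_i a_i + \sum_j a'_j$, it suffices to show that for each summand $a_i$ of $a$ there is some summand $a'_{j(i)}$ of $a'$ which \emph{eats} $a_i$ (then $a_i + a'_{j(i)} = a'_{j(i)}$, and adding up, $a + a' = a'$, using that eating is preserved under adding further terms to the eater). Fix $i$. By Remark~\ref{rem.dim-union}, $\dim_*(a_i) \in M_X$ for the relevant $X$, so $\dim_*(a_i) \le d$ for some $d \in \mdim(a) \le \mdim(a')$, hence $\dim_*(a_i) \le \dim_*(a'_j)$ for some $j$ with $a'_j$ purely unbounded. Write $a_i = b_1 \cdots b_n$ and $a'_j = c_1 \cdots c_n$ (pad with factors equal to $1$ so both have the same length $n$), with $c_1 = \infty$ after reordering. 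I want to apply Lemma~\ref{lem.eat}, which requires: for each index there is $k \in \NN$ with $k c_{\ast} \ge b_{\ast}$, after suitably matching up the factors.

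Here is the main obstacle and the key combinatorial step: matching the factors of $a_i$ with those of $a'_j$ so that each $b$-factor is bounded by an integer multiple of the corresponding $c$-factor. The inequality $\dim_*(a_i) \le \dim_*(a'_j)$ means $\#\{t : b_t \notin H\} \le \#\{t : c_t \notin H\}$ for every $H \in \csg$. I would translate ``$b \notin H$ but $b \le k c$ for some $k$'' into the archimedean-class ordering: for $b, c \in \Ni$, there is $k \in \NN$ with $kc \ge b$ iff either $c = \infty$, or $c \ge 1$ and the archimedean class of $b$ is $\le$ that of $c$. The convex subgroups $H$ of $Z$ (those in $\csg$) are exactly the ``initial segments'' of archimedean classes, so the condition $\#\{t : b_t \notin H\} \le \#\{t : c_t \notin H\}$ for all $H$ says precisely that, when the $b_t$ and $c_t$ are each sorted in decreasing order of archimedean class, the $t$-th largest $b$-class is $\le$ the $t$-th largest $c$-class for every $t$ (a majorization-type statement). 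This gives the required matching: pair the $t$-th largest $b_t$ with the $t$-th largest $c_t$; then $k_t c_t \ge b_t$ for suitable $k_t$, and since $c_1 = \infty$ is available as the first (largest) factor, Lemma~\ref{lem.eat} applies and yields that $a'_j$ eats $a_i$. I expect verifying this majorization reformulation of $\dim_* \le \dim_*$ in terms of $\csg$ to be the technical heart; everything else is bookkeeping. Then summing over $i$ completes the proof. \qed
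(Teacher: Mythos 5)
Your proposal is essentially the paper's proof: reduce to the case where both $a$ and $a'$ are products of elements of $\Ni$ and then invoke Lemma~\ref{lem.eat}, with the key combinatorial step being that $\dim_*(a_i) \le \dim_*(a'_j)$, unwound via Lemma~\ref{lem.mdim-ex} to the condition $\#\{t : b_t \notin H\} \le \#\{t : c_t \notin H\}$ for all $H \in \csg$, produces a factor-by-factor matching satisfying the hypotheses of Lemma~\ref{lem.eat}. The paper phrases this matching by sorting both tuples by value and showing directly that if $k a'_i \ge a_i$ failed for some $i$, then taking $H$ to be the smallest convex subgroup containing $a'_i$ would force $d_H > d'_H$, a contradiction; your "majorization of archimedean classes after sorting decreasingly" is the same argument in different clothing. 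One place where your proposal adds unneeded friction is the detour about discarding bounded summands of $a'$ (and the circularity worry that comes with it): the paper avoids this entirely by, for a given summand $a_i$ of $a$, using Remark~\ref{rem.dim-union} to pick $d' \in \mdim(a')$ with $\dim_*(a_i) \le d'$; by definition of $\mdim$ and pure unboundedness, this $d'$ is automatically $\dim_*(b'_j)$ for a purely unbounded monomial $b'_j$, so the monomial case applies directly with no cleanup. So the proof is correct, but the general-case bookkeeping can be shortened by arguing only with maximal elements of $\mdim(a')$ rather than trying to normalize the presentation of $a'$.
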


\begin{proof}
We first prove the ``monomial case'', i.e., we assume that $a = a_1\cdots a_n$
and $a' = a'_1 \cdots a'_{n'}$
for some $a_1, \dots, a_n, a'_1, \dots, a'_{n'} \in \Ni$.
We may assume $n = n'$ (otherwise, insert additional factors $1$) and
$a_1 \le \dots \le a_n$, $a'_1 \le \dots \le a'_n$.
Set  $(d_H)_H := \dim_* (a)$ and $(d'_H)_H := \dim_* (a')$.

To prove that $a'$ eats $a$, we use Lemma~\ref{lem.eat}. Since $a'$ is purely unbounded,
we have $a'_n = \infty$, so the only missing ingredient to Lemma~\ref{lem.eat} is
that for each $i \le n$, there exists a $k \in \NN$ such that $ka'_i \ge a_i$.
Suppose this fails for $i$. We denote by $H \in \csg$ the smallest convex subgroup of $Z$ containing $a'_i$.
Then $a'_i \in H$ (and also $a'_j \in H$ for all $j < i$) but $a_i \notin H$ (and $a_j \notin H$ for $j > i$).
By Lemma~\ref{lem.mdim-ex}, this implies $d'_H \le n - i$ but $d_H > n - i$.
This contradicts $(d_H)_H \le (d'_H)_H$.

Now consider the general case. We write $a = b_1 + \dots + b_m$ and $a' = b'_1 + \dots + b'_{m'}$,
where $b_i$, $b'_j$ are monomials as in the above case.
It suffices to find, for each $i \le m$, a $j \le m'$ such that $b'_j$ eats $b_i$, so let $i$ be given.
Since $\mdim(b_i) \le \mdim(a) \le \mdim(a')$, there exists a $d' \in \mdim(a')$ such that $\dim_*(b_i) \le d'$.
By Remark~\ref{rem.dim-union}, $d' = \dim_*(b'_j)$ for some $j \le m'$.
Now $b'_j$ eats $b_i$ by the monomial case (and using that because $\dim_*(b'_j) \in \mdim(a')$, it has $\infty$ as a factor).
\end{proof}

\subsection{Proof of Theorem~\ref{thm.groth} (combining hyper-cardinality and dimensions)}

\begin{proof}[Proof of Theorem~\ref{thm.groth}]
The image of the map $\Phi\colon \Sym \Ni \to \Ks$ consists exactly of all finite disjoint unions of cuboids, so by Proposition~\ref{prop.bij-to-cube},
it is surjective.
By Proposition~\ref{prop.groth-b}, $\Phi$ restricts to
an isomorphism of sub-semirings $\Sym N \to \Ksb$.
It remains to check that $\Phi$ is injective.

Suppose that $\Phi(a) = \Phi(a'') = [X]$ for some definable set $X$.
Instead of proving directly $a = a''$ we will find an intermediate element $a' \in \Sym \Ni$
and prove $a = a'$ and $a' = a''$; this $a'$ is defined as follows.

We can write $a$ as a sum $a = a_{u} + a_{b}$, where $a_{u}$ is purely unbounded and $a_{b} \in \Sym N$.
Indeed, if we write $a$ as a sum of products of elements of $\Ni$, we can take $a_{u}$ to be the sum
of those summands with factor $\infty$ and $a_{b}$ to be the sum of those without.
Set $[X_{b}] := \Phi(a_{b})$ and $[X_{u}] := \Phi(a_{u})$. Since $[X_{b}] + [X_{u}] = [X]$, we may assume that $X = X_{b} \dcup X_{u}$.
Do the same with $a''$ to obtain a second partition $X = X''_{b} \dcup X''_{u}$ of $X$
and set $X'_{b} := X_{b} \cap X''_{b}$ and $X'_{u} := X_{u} \cup X''_{u}$; then we also have
$X = X'_{b} \dcup X'_{u}$.

Choose any preimages
$a'_{b} \in \Phi\1([X'_{b}])$ and $a'_{u} \in \Phi\1([X'_{u}])$ and set $a' := a'_{b} + a'_{u}$.
We will now prove $a = a'$; the proof of $a' = a''$ works analogously.

Set $X_m := X_{b} \cap X'_{u}$; then we have: $X = X_{u} \dcup X_m \dcup X'_{b}$, $X'_{u} = X_{u} \dcup X_m$, $X_{b} = X'_{b} \dcup X_m$.

By definition of $a_{b}, a''_{b}, a_{u}, a''_{u}$ and by Lemma\ref{lem.mdim-ex}, $X_{b}, X''_{b}$ are bounded and $X_{u}, X''_{u}$ are purely unbounded.
Lemma~\ref{lem.dim-union} implies that $X'_{u}$ is purely unbounded, too.
However, $X'_{u} = X_{u} \dcup X_m$, where $X_m$ is bounded (since it is a subset of $X_{b}$),
so using Lemma~\ref{lem.dim-union}, we deduce $\mdim(X_m) \le \mdim(X'_{u}) = \mdim(X_{u})$.

Set $a_m := \#X_m$. By Proposition~\ref{prop.groth-b}, we have
$\Phi(a_m) = [X_m]$ and $a'_{b} + a_m = a_{b}$ (since $\Phi(a'_{b}) + \Phi(a_m) = \Phi(a_{b})$ and $a_m, a_{b}, a'_{b} \in \Sym N$).
Now we apply Lemma~\ref{lem.unbound-eat} three times: $\mdim(X'_{u}) = \mdim(X_{u})$ implies that $a'_{u}$ eats $a_{u}$ and vice versa and using $\mdim(X_m) \le \mdim(X_{u})$,
we obtain that $a_{u}$ also eats $a_m$. This allows us to finish the computation:
\[
a = a_{b} + a_{u} = a'_{b} + a_m + a_{u} = a'_{b} + a_{u} = a'_{b} + a_{u} + a'_{u} = a'_{b} + a'_{u} = a'
.
\]
\end{proof}

\section{Consequences}
\label{sect.cor}

\subsection{Consequences for definable sets}

By now, we have an almost complete algorithm to find out whether two given definable sets are in definable bijection (assuming that
$Z$ is given in a suitable ``algorithmic way''; we leave it to the reader to make this precise), namely:
By following the proof of Proposition~\ref{prop.bij-to-cube}, one can turn each definable set into a finite disjoint union of cuboids
and hence write its preimage under $\Phi\colon \Sym \Ni \to \Ks$ as an expression in the generators $\Ni$ of $\Sym \Ni$. The only missing ingredient
is a way to find out whether two such expressions are equal. This is what Proposition~\ref{prop.eq.test} provides.

To state it, it is handy to extend multidimension inequalities to $\Sym Z$.

\begin{defn}
Given $a \in \Sym Z$ and $a' \in \Sym \Ni$, we write $\mdim (a) \le \mdim (a')$ if there exist $b, b'\in \Sym N$
with $a = b - b'$ and with $\mdim(b), \mdim(b') \le \mdim (a')$.
\end{defn}

Note that we did not define ``$\mdim (a)$'' itself. Probably this could also be defined, but we do not need it.
Note also that in the case that $a \in \Sym N$, this new definition of $\mdim (a) \le \mdim (a')$ agrees with
the previous definition.

\private{This last claim needs a short computation. Checked. -Immi}

\begin{prop}\label{prop.eq.test}
Suppose that $a = a_{b} + a_{u}$ and $a' = a'_{b} + a'_{u}$ are elements of $\Sym\Ni$, where
$a_{b}, a'_{b} \in \Sym N$ and $a_{u}, a'_{u}$ are purely unbounded.
Then $a = a'$ if and only if the following two conditions hold:
\begin{enumerate}
 \item $\mdim (a_{u}) = \mdim (a'_{u})$
 \item $\mdim(a_{b} - a'_{b}) \le \mdim (a_{u})$.
\end{enumerate}
Moreover (1) implies $a_{u} = a'_{u}$.
\end{prop}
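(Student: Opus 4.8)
The plan is to prove both directions, using the tools already developed: Lemma~\ref{lem.unbound-eat} (purely unbounded elements eat lower-dimensional ones), Proposition~\ref{prop.groth-b} (hyper-cardinality inverts $\Phi$ on $\Sym N$), and the structure of $\Sym \Ni$ revealed by the proof of Theorem~\ref{thm.groth}.

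\emph{The ``only if'' direction.} Suppose $a = a'$. The key observation is that, after applying $\Phi$, we get definable sets, and we may assume (as in the proof of Theorem~\ref{thm.groth}) that $\Phi(a_{u}) = [X_{u}]$, $\Phi(a_{b}) = [X_{b}]$ with $X_{u}$ purely unbounded, $X_{b}$ bounded, and similarly for $a'$. The equality $a = a'$ forces the corresponding disjoint unions $X_{b} \dcup X_{u}$ and $X'_{b} \dcup X'_{u}$ to be in definable bijection. For condition~(1), I would argue that $\mdim$ of a purely unbounded piece is recoverable from $\mdim$ of the whole: using Lemma~\ref{lem.dim-union}, $\mdim(a)$ is the set of maximal elements of $\mdim(a_{b}) \cup \mdim(a_{u})$, and since $a_{b} \in \Sym N$ all elements of $\mdim(a_{b})$ have $Z$-component $0$ while all of $\mdim(a_{u})$ have $Z$-component $\ge 1$, the two contribute to disjoint ``slices'' of $\mdim(a)$; hence $\mdim(a_{u})$ is exactly the part of $\mdim(a)$ with $d_Z \ge 1$, and similarly for $a'$. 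Since $\mdim(a) = \mdim(a')$ (as $a=a'$), we get $\mdim(a_{u}) = \mdim(a'_{u})$. For condition~(2), note $a_{b} - a'_{b} = a'_{u} - a_{u}$ in $\Sym Z$, and writing $a_{u}, a'_{u}$ as sums of monomials with an $\infty$-factor shows (via Remark~\ref{rem.dim-union} and $\mdim(a_{u}) = \mdim(a'_{u})$) that both $\mdim(a_{u})$ and $\mdim(a'_{u})$ are $\le \mdim(a_{u})$, so $\mdim(a_{b} - a'_{b}) \le \mdim(a_{u})$ by definition.

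\emph{The ``if'' direction, and the ``moreover''.} Assume (1) and (2). First I'd dispatch the ``moreover'': assuming only $\mdim(a_{u}) = \mdim(a'_{u})$, Lemma~\ref{lem.unbound-eat} applies in both directions (each is purely unbounded, and $\mdim$ of one is $\le \mdim$ of the other), so $a_{u}$ eats $a'_{u}$ and $a'_{u}$ eats $a_{u}$; then $a_{u} = a_{u} + a'_{u} = a'_{u} + a_{u} = a'_{u}$. Now for the main claim: by (2), choose $b, b' \in \Sym N$ with $a_{b} - a'_{b} = b - b'$ and $\mdim(b), \mdim(b') \le \mdim(a_{u})$. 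Then $a_{b} + b' = a'_{b} + b$ in $\Sym N$ (both lie in $\Sym N$, so this is a genuine identity there). Using Lemma~\ref{lem.unbound-eat}, $a_{u}$ eats $b$ and $a_{u}$ eats $b'$ (both have smaller-or-equal multidimension and $a_{u}$ is purely unbounded). Therefore
\[
a = a_{b} + a_{u} = a_{b} + b' + a_{u} = a'_{b} + b + a_{u} = a'_{b} + a_{u} = a'_{b} + a'_{u} = a',
\]
where the last-but-one equality uses $a_{u} = a'_{u}$ from the ``moreover'' part.

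\emph{Main obstacle.} The delicate point is the ``only if'' direction: extracting $\mdim(a_{u})$ from $\mdim(a)$ requires knowing that the bounded and purely unbounded summands occupy disjoint regions of $\dimspace$ in a way compatible with taking maximal elements. This rests on the fact that a monomial $a_1\cdots a_n$ has an $\infty$-factor if and only if its $\dim_*$ has $Z$-component $\ge 1$ (from Lemma~\ref{lem.mdim-ex}, since $Z$ is the largest element of $\csg$, so $a_i \notin Z$ forces $a_i = \infty$), combined with Remark~\ref{rem.dim-union}. One must also be a little careful that ``purely unbounded'' for an element of $\Sym \Ni$ (defined via $\mdim$) matches the decomposition into monomials with $\infty$-factors; this is exactly the content of Remark~\ref{rem.pub}, which is now available since Theorem~\ref{thm.groth} is proved. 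With that in hand the argument is bookkeeping.
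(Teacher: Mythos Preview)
Your ``if'' direction and the ``moreover'' are correct and match the paper's argument essentially verbatim. Your argument for (1) in the ``only if'' direction is also fine and is a more detailed version of what the paper says.

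The gap is in your argument for (2) in the ``only if'' direction. You write ``$a_{b} - a'_{b} = a'_{u} - a_{u}$ in $\Sym Z$'', but this equation is meaningless: $a_{u}$ and $a'_{u}$ live in $\Sym \Ni$, not in $\Sym Z$ (they contain monomials with a factor $\infty$), and $\Sym \Ni$ is only a semiring, so $a'_{u} - a_{u}$ is undefined. Consequently you cannot use $a'_{u}$ and $a_{u}$ as the witnesses $b, b' \in \Sym N$ required by the definition of $\mdim(a_{b} - a'_{b}) \le \mdim(a_{u})$. The tautology $\mdim(a_{u}), \mdim(a'_{u}) \le \mdim(a_{u})$ that you invoke does not help, because the definition explicitly demands the witnesses to lie in $\Sym N$.

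The paper circumvents this by a short geometric argument using Theorem~\ref{thm.groth}: realise $a, a_{b}, a_{u}, a'_{b}, a'_{u}$ as definable sets with $X = X_{b} \dcup X_{u} = X'_{b} \dcup X'_{u}$, set $a_{m} := \Phi^{-1}([X_{b} \cap X'_{u}])$, $a'_{m} := \Phi^{-1}([X'_{b} \cap X_{u}])$, and $a_{bb} := \Phi^{-1}([X_{b} \cap X'_{b}])$. Then $a_{b} = a_{bb} + a_{m}$ and $a'_{b} = a_{bb} + a'_{m}$ in $\Sym N$, so $a_{b} - a'_{b} = a_{m} - a'_{m}$ with $a_{m}, a'_{m} \in \Sym N$ (they correspond to bounded sets), and $\mdim(a_{m}) \le \mdim(a'_{u}) = \mdim(a_{u})$, $\mdim(a'_{m}) \le \mdim(a_{u})$ by Remark~\ref{rem.subset}. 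This produces the required witnesses in $\Sym N$.
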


\begin{proof}
``$\Leftarrow$'': (This is essentially already contained in the proof of Theorem~\ref{thm.groth}.)
By Lemma~\ref{lem.unbound-eat}, $\mdim (a_{u}) = \mdim(a'_{u})$ implies
$a_{u} = a_{u} + a'_{u} = a'_{u}$. (This also proves the moreover part.) By (2), we have $a_{b} - a'_{b} = a_m - a'_m$ for some $a_m, a'_m \in \Sym N$
satisfying $\mdim (a_m), \mdim(a'_m) \le \mdim (a_{u})$. By Lemma~\ref{lem.unbound-eat} again, $a_m$ and $a'_m$ are eaten by $a_{u}$, so we have
$a_{u} + a_{b} = a_{u} + a'_{b}$, from which $a = a'$ follows.

``$\Rightarrow$'' (1) By Remark~\ref{rem.dim-union} and using that no element of $\mdim (a_{b})$ is bigger than any element of $\mdim(a_{u})$, $\mdim(a)$ determines $\mdim(a_{u})$.

(2) Denote by $X, X_{u}, X_{b}, X'_{u}, X'_{b}$ definable sets corresponding to $a, a_{u}, a_{b}, a'_{u}. a'_{b}$, chosen in such a way that $X = X_{u} \dcup X_{b} = X'_{u} \dcup X'_{b}$.
Set $X_{m} := X'_{u} \cap X_{b}$ and $a_{m} := \Phi\1([X_{m}])$. Since $X_{m} \subseteq X'_{u}$, we have $\mdim(a_m) \le \mdim(a'_{u})$
(see Remark~\ref{rem.subset}). Similarly, $\mdim(a'_m) \le \mdim(a_{u})$, where $a'_m := \Phi\1([X'_{m}])$ for $X'_{m} := X_{u} \cap X'_{b}$.
For $a_{bb} := \Phi\1([X_{b} \cap X'_{b}])$, we have $a_{b} - a'_{b} = (a_{bb} + a_m) - (a_{bb} + a'_m) = a_m - a'_m$, so (2) follows.
\end{proof}

From this, we can deduce a complete classification of who eats whom.

\begin{cor}\label{cor.eat}
For $a, a' \in \Sym \Ni$, we have $a' + a = a'$ iff $\mdim (a) \le \mdim (a'_{u})$,
where $a'_{u}$ is obtained from $a'$ as in Proposition~\ref{prop.eq.test}.
\end{cor}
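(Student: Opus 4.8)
The strategy is to reduce the statement about "eating" to the equality test of Proposition~\ref{prop.eq.test}, since $a'+a=a'$ is just an equality of two elements of $\Sym\Ni$. First I would write $a'=a'_{b}+a'_{u}$ with $a'_{b}\in\Sym N$ and $a'_{u}$ purely unbounded (as in Proposition~\ref{prop.eq.test}), and similarly split $a=a_{b}+a_{u}$. The key reduction is that the sum $a'+a$ has bounded part $a'_{b}+a_{b}$ and purely unbounded part $a'_{u}+a_{u}$: indeed, writing everything as a sum of products of elements of $\Ni$, the summands containing a factor $\infty$ are exactly those forming the purely unbounded part, and this decomposition is additive. So Proposition~\ref{prop.eq.test} applied to the pair $(a',\,a'+a)$ says that $a'=a'+a$ if and only if
\begin{enumerate}
 \item $\mdim(a'_{u}) = \mdim(a'_{u}+a_{u})$, and
 \item $\mdim\bigl(a'_{b}-(a'_{b}+a_{b})\bigr) = \mdim(-a_{b}) \le \mdim(a'_{u})$.
\end{enumerate}

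**Analysing the two conditions.** For condition~(2): $\mdim(-a_{b})\le\mdim(a'_{u})$ holds iff $\mdim(a_{b})\le\mdim(a'_{u})$, directly from the definition of $\mdim$-inequalities on $\Sym Z$ (one can take $b=0$, $b'=a_{b}$, or swap the roles). For condition~(1): by Remark~\ref{rem.dim-union}, $\mdim(a'_{u}+a_{u})$ is the set of maximal elements of $\mdim(a'_{u})\cup\mdim(a_{u})$, so condition~(1) is equivalent to saying every element of $\mdim(a_{u})$ is dominated by some element of $\mdim(a'_{u})$, i.e. $\mdim(a_{u})\le\mdim(a'_{u})$. Combining, $a'+a=a'$ is equivalent to the conjunction of $\mdim(a_{b})\le\mdim(a'_{u})$ and $\mdim(a_{u})\le\mdim(a'_{u})$. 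Finally, since $\mdim(a)$ is the set of maximal elements of $\mdim(a_{b})\cup\mdim(a_{u})$ (again Remark~\ref{rem.dim-union}), the conjunction $\mdim(a_{b})\le\mdim(a'_{u})\wedge\mdim(a_{u})\le\mdim(a'_{u})$ is equivalent to the single inequality $\mdim(a)\le\mdim(a'_{u})$. This gives the claimed characterization.

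**Main obstacle.** The one point requiring a little care is making the translation between "$\mdim(-a_{b})\le\mdim(a'_{u})$" (the $\Sym Z$-version of the inequality, involving a choice of difference presentation $a_{b}-a'_{b}=b-b'$) and "$\mdim(a_{b})\le\mdim(a'_{u})$" (the $\Sym N$-version) genuinely rigorous — one must check these agree, which is the "short computation" flagged in the remark after the definition of $\mdim$ on $\Sym Z$ and which uses Remark~\ref{rem.subset} together with the fact that $\mdim$ of a sum is controlled by the $\mdim$'s of the summands. Everything else is a bookkeeping exercise: verifying the additivity of the bounded/purely-unbounded decomposition under addition, and repeatedly invoking Remark~\ref{rem.dim-union}. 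So the proof is essentially: apply Proposition~\ref{prop.eq.test} to $(a',a'+a)$, unwind conditions (1) and (2) via Remark~\ref{rem.dim-union}, and collapse the resulting pair of inequalities into $\mdim(a)\le\mdim(a'_{u})$.
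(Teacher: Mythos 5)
Your proposal is correct and takes essentially the same route as the paper: apply Proposition~\ref{prop.eq.test} to the equality $a'+a=a'$, observe that the decomposition into bounded and purely unbounded parts is additive, and use Remark~\ref{rem.dim-union} to collapse the two resulting conditions into $\mdim(a)\le\mdim(a'_u)$. The only cosmetic difference is the order in which you form the difference in condition~(2) (giving $-a_b$ instead of $a_b$, which is immaterial since swapping $b\leftrightarrow b'$ in the $\Sym Z$-inequality definition is harmless); you are actually slightly more explicit than the paper about the two bookkeeping points (additivity of the decomposition under $+$, and the sign issue).
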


\begin{proof}
Let $a = a_{u} + a_{b}$ and $a' = a'_{u} + a'_{b}$ be as in Proposition~\ref{prop.eq.test}. By the proposition, $a' + a = a'$ is equivalent to
(1) $\mdim(a'_{u} + a_{u}) = \mdim(a'_{u})$ and (2) $\mdim((a'_{b} + a_{b}) - a'_{b}) \le \mdim (a'_{u})$.
By Remark~\ref{rem.dim-union}, (1) is equivalent to $\mdim(a_{u}) \le \mdim(a'_{u})$, and (2) simplifies to $\mdim(a_{b}) \le \mdim(a'_{u})$.
Using Remark~\ref{rem.dim-union} once more, the conjunction of (1) and (2) is equivalent to  $\mdim(a_{u} + a_{b}) \le \mdim(a'_{u})$, which is what we claimed.
\end{proof}

We can now infer some general corollaries about definable sets in $\cZ$. First, here is a Cantor--Schr\"oder--Bernstein like result.

\begin{cor}\label{cor.inin}
If $X$ and $X'$ are definable sets and there exist definable injections $X \to X'$ and $X'\to X$, then $[X] = [X']$.
\end{cor}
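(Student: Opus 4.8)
The plan is to transfer the problem into $\Sym\Ni$ and argue there, using the structure theory we have built up. By Theorem~\ref{thm.groth}, $\Phi\colon\Sym\Ni\to\Ks$ is an isomorphism, so writing $a := \Phi\1([X])$ and $a' := \Phi\1([X'])$, it suffices to prove $a = a'$. A definable injection $X\to X'$ means that $X$ is in definable bijection with a definable subset of $X'$, so there is a definable set $R$ with $[X'] = [X] + [R]$ in $\Ks$; equivalently, setting $r := \Phi\1([R])\in\Sym\Ni$, we have $a' = a + r$. Symmetrically, from the injection $X'\to X$ we get $s\in\Sym\Ni$ with $a = a' + s$. Combining, $a = a + r + s$, i.e.\ $a$ eats $r + s$ (in the sense of Definition~\ref{defn.eats}).

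Now I would apply Corollary~\ref{cor.eat}: writing $a = a_{u} + a_{b}$ with $a_{u}$ purely unbounded and $a_{b}\in\Sym N$ as in Proposition~\ref{prop.eq.test}, the relation $a + (r+s) = a$ gives $\mdim(r+s) \le \mdim(a_{u})$. Since $\mdim(r+s)$ is the set of maximal elements of $\mdim(r)\cup\mdim(s)$ (Remark~\ref{rem.dim-union}), and in particular $\mdim(r)\le\mdim(r+s)\le\mdim(a_{u})$ (using Remark~\ref{rem.subset} or directly that each maximal element of $\mdim(r)$ is dominated by one in $\mdim(r+s)$), we conclude $\mdim(r)\le\mdim(a_{u})$. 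Applying Corollary~\ref{cor.eat} in the other direction, this is exactly the condition for $a' $ to eat $r$: indeed $\mdim(r)\le\mdim(a_{u})$, and one checks $\mdim(a_{u}) = \mdim(a'_{u})$ since $a' = a + r$ forces $\mdim(a'_{u})$ to be the maximal elements of $\mdim(a_{u})\cup\mdim(r_{u}) = \mdim(a_{u})$ (as $\mdim(r)\le\mdim(a_{u})$). Hence $a' + r = a'$, and therefore
\[
a' = a' + r = (a + r) + r = a + r = a',
\]
wait --- more directly: $a = a' + s$ and $a' $ eats $r$, but what we actually want is $a = a'$. Let me instead argue symmetrically: from $a = a+r+s$ we showed $\mdim(s)\le\mdim(a_u)$ as well, and by the same reasoning $a$ eats $s$, so $a = a + s$; but also $a' + s = a$, hence $a = a + s = \cdots$. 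The cleanest finish: $a' = a + r$ and $a$ eats $r$ (since $\mdim(r)\le\mdim(a_u)$, by Corollary~\ref{cor.eat}), so $a' = a + r = a$. Done.

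The main obstacle is the bookkeeping around the decomposition into bounded and purely unbounded parts and making sure the multidimension inequalities propagate correctly through sums --- in particular, verifying that $a$ eating $r+s$ really does force $a$ to eat $r$ individually, which rests on the monotonicity of $\mdim$ under the partial order on $\dimspace$ (Remark~\ref{rem.dim-union}) together with Corollary~\ref{cor.eat}. Everything else is formal manipulation in the semiring $\Sym\Ni$ once Theorem~\ref{thm.groth} is available. I do not expect to need hyper-cardinality directly here, since the eating relation already encodes everything, although one could alternatively run the bounded and unbounded cases separately, using Lemma~\ref{lem.inject} and injectivity of $\#$ on $\Ksb$ for the bounded part and the eating machinery for the unbounded part.
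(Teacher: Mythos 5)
Your proposal is correct and takes essentially the same route as the paper: translate via $\Phi$ to the statement that $a + r + s = a$ implies $a + r = a$ in $\Sym\Ni$, then apply Corollary~\ref{cor.eat} twice, using that $\mdim(r) \le \mdim(r+s) \le \mdim(a_u)$. The middle of your write-up contains some backtracking (the aside about $a'$ eating $r$ is a detour), but the ``cleanest finish'' you settle on is precisely the paper's argument, just stated more verbosely.
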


\begin{proof}
In terms of elements of $\Sym \Ni$, we need to prove: If $a + b + b' = a$ (for $a, b, b' \in \Sym \Ni$) then $a + b = a$.
This follows using Corollary~\ref{cor.eat}: $\mdim (b + b') \le \mdim (a_{u})$ implies $\mdim (b) \le \mdim (a_{u})$.
\end{proof}

The next corollary states that bounded sets can be additively cancelled.

\begin{cor}\label{cor.cancel}
Suppose that $X$, $X'$ and $Y$ are definable sets, where $Y$ is bounded. Then $[X \dcup Y] = [X' \dcup Y]$ implies $[X] = [X']$.
\end{cor}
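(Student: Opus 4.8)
The plan is to translate the statement into the semiring $\Sym\Ni$ using the isomorphism $\Phi$ from Theorem~\ref{thm.groth}, and then apply the cancellation machinery we have already built. Concretely, write $[X] = \Phi(a)$, $[X'] = \Phi(a')$ and $[Y] = \Phi(c)$ for elements $a, a', c \in \Sym\Ni$; since $Y$ is bounded, Proposition~\ref{prop.groth-b} (or Lemma~\ref{lem.mdim-ex}) gives $c \in \Sym N$. The hypothesis $[X \dcup Y] = [X' \dcup Y]$ becomes $a + c = a' + c$ in $\Sym\Ni$, and we must deduce $a = a'$.

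First I would decompose $a = a_u + a_b$ and $a' = a'_u + a'_b$ as in Proposition~\ref{prop.eq.test}, with $a_u, a'_u$ purely unbounded and $a_b, a'_b \in \Sym N$. Then $a + c = a' + c$ reads $a_u + (a_b + c) = a'_u + (a'_b + c)$, which is again a decomposition of the same element into a purely unbounded part plus a part in $\Sym N$ (note $a_b + c, a'_b + c \in \Sym N$). Applying Proposition~\ref{prop.eq.test} to this equality yields two facts: (1) $\mdim(a_u) = \mdim(a'_u)$, and in fact the ``moreover'' clause gives $a_u = a'_u$ outright; and (2) $\mdim\big((a_b + c) - (a'_b + c)\big) \le \mdim(a_u)$, i.e.\ $\mdim(a_b - a'_b) \le \mdim(a_u)$. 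But conditions (1) and (2) are exactly the hypotheses of Proposition~\ref{prop.eq.test} applied to the pair $a = a_u + a_b$ and $a' = a'_u + a'_b$, so the proposition gives $a = a'$, hence $[X] = [X']$.

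The only point requiring a little care is that $c$ genuinely lies in $\Sym N$ rather than merely in $\Sym\Ni$; this is where boundedness of $Y$ is used, via Proposition~\ref{prop.groth-b}, which identifies $\Phi|_{\Sym N}$ with an isomorphism onto $\Ksb$. Given that, the argument is essentially a bookkeeping exercise with the decomposition, and there is no serious obstacle — the substantive content has already been absorbed into Proposition~\ref{prop.eq.test} and Lemma~\ref{lem.unbound-eat}. If one prefers a proof avoiding the explicit decomposition, one can instead invoke Corollary~\ref{cor.eat}: from $a + c = a' + c$ one gets mutual ``eating'' relations and combines them, but routing through Proposition~\ref{prop.eq.test} is the cleanest.
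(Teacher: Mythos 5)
Your proof is correct and follows essentially the same route as the paper's: translate via $\Phi$ to $a + c = a' + c$ in $\Sym\Ni$ with $c \in \Sym N$, apply Proposition~\ref{prop.eq.test} to the decompositions $a_u + (a_b + c) = a'_u + (a'_b + c)$ to extract the two conditions, then apply the proposition a second time to conclude $a = a'$. The paper's argument is word-for-word the same modulo notation (it calls $c$ by the name $b$), so there is nothing to add.
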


\begin{proof}
Set $a := \Phi\1([X]), a' := \Phi\1([X']), b := \Phi\1([Y])$ and let $a = a_{u} + a_{b}$, $a' = a'_{u} + a'_{b}$ be decompositions as in Proposition~\ref{prop.eq.test}.
Then applying the proposition to $a_{u} + (a_{b} + b) = a'_{u} + (a'_{b} + b)$ yields $\mdim (a_{u}) = \mdim (a'_{u})$
and $a_{b} - a'_{b} = (a_{b} + b) - (a'_{b} + b) \le \mdim (a_{u})$ and hence (using the proposition again) $a = a'$.
\end{proof}

It might seem plausible that non-empty bounded sets also cancel multiplicatively. This is clearly true in $\ZZ$ (using the classification of definable sets),
and also in $Z = \QQ \times \ZZ$ (ordered lexicographically), it is true; we leave this to the reader as an exercise. However,
it is false in general, as the following example shows. (The idea behind the example is that addition of multidimension is not cancellative.)

\private{I did the exercise. -Immi}

\begin{exa}
Suppose that there exist $a, a' \in N \setminus \NN$ satisfying $a' > na$ for all $n \in \NN$.
Consider $X = \cN \times [0,a)^4 \dcup \cN \times [0,a')^2$, $X' = X \dcup \cN \times [0,a') \times [0,a)^2$ and $Y = [0,a)^2 \dcup [0,a')$.
Then $[X \times Y] = [X' \times Y]$, but $[X] \ne [X']$ (as one can verify using Proposition~\ref{prop.eq.test}).
\end{exa}

However, multiplicative cancellation at least works if all involved sets are bounded:

\begin{cor}\label{cor.cancel.mult}
Suppose that $X, X', Y$ are definable sets, all three bounded and with $Y \ne \emptyset$. Then $[X \times Y] = [X' \times Y]$ implies $[X] = [X']$.
\end{cor}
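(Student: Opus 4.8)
The plan is to reduce the statement to multiplicative cancellation in the integral domain $\Sym Q$. First I would note that a product of bounded sets is bounded (if $X \subseteq [-c,c)^n$ and $Y \subseteq [-c,c)^m$ then $X\times Y \subseteq [-c,c)^{n+m}$), so all of $X$, $X'$, $Y$, $X\times Y$, $X'\times Y$ lie in $\Ksb$. By Theorem~\ref{thm.groth} (more precisely Proposition~\ref{prop.groth-b}), $\Phi$ restricts to a semiring isomorphism $\Sym N \iso \Ksb$, so I may set $a := \Phi\1([X])$, $a' := \Phi\1([X'])$ and $b := \Phi\1([Y])$, all in $\Sym N$. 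Since $\Phi$ is a ring homomorphism, $\Phi\1([X\times Y]) = ab$ and $\Phi\1([X'\times Y]) = a'b$, so the hypothesis $[X\times Y] = [X'\times Y]$ translates into $ab = a'b$ in $\Sym N$, and hence in $\Sym Q$ via the injection $\Sym N \hookrightarrow \Sym Q$ of Lemma~\ref{lem.inject}.

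Next, $Q = Z \otimes_\ZZ \QQ$ is torsion-free, so by Lemma~\ref{lem.SQ} the semiring $\Sym Q$ is an integral domain. Therefore, once I know that $b \ne 0$ in $\Sym Q$, I can cancel $b$ from $ab = a'b$ to obtain $a = a'$, and then $[X] = \Phi(a) = \Phi(a') = [X']$, as desired. Thus everything comes down to showing that the hyper-cardinality $b = \#Y$ of a non-empty bounded definable set is non-zero.

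For this I would use a ``positivity'' homomorphism to the Boolean semiring $\{0,1\}$ (with $1+1=1$). The map $N \to \{0,1\}$ sending $0$ to $0$ and every strictly positive element of $N$ to $1$ is a semigroup homomorphism whose composition with $s\colon \NN \to N$ is a semiring homomorphism $\NN \to \{0,1\}$; hence by the universal property of $\Sym$ (the remark after Definition~\ref{defn.sym}) it extends to a semiring homomorphism $\beta\colon \Sym N \to \{0,1\}$. By Proposition~\ref{prop.bij-to-cube}, $Y$ is in definable bijection to a finite disjoint union of bounded cuboids $C_1, \dots, C_m$, and since $Y \ne \emptyset$ at least one $C_j$ is non-empty; correspondingly $b = \#C_1 + \dots + \#C_m$, where a non-empty bounded cuboid $[0,a_1)\times\dots\times[0,a_\ell)$ has hyper-cardinality $a_1\cdots a_\ell$ with all $a_i \in N$ strictly positive, so $\beta(\#C_j) = 1$ for that $j$. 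Therefore $\beta(b) = 1 \ne 0$, so $b \ne 0$.

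The only non-formal ingredient in this argument is the non-vanishing of $b$; everything else is bookkeeping with the isomorphism $\Phi$ and the domain property of $\Sym Q$. Accordingly, I expect the main (and only) obstacle to be the non-vanishing of hyper-cardinalities of non-empty bounded sets, which is exactly where the hypothesis $Y \ne \emptyset$ is used and which the Boolean-semiring trick above disposes of cleanly.
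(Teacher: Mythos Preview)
Your argument is correct and follows the same route as the paper: both reduce to cancellation inside an integral domain obtained from Lemma~\ref{lem.SQ} (the paper phrases it as ``$\Ksb \hookrightarrow \Sym Z$ is an integral domain'', you use $\Sym Q$ instead---either works since both $Z$ and $Q$ are torsion-free). Your Boolean-semiring verification that $b = \#Y \ne 0$ is a clean way to make explicit what the paper leaves implicit; note that one could also argue this directly on the level of $\Ksb$, since ``non-empty'' is preserved under definable bijections and disjoint unions, giving a semiring map $\Ksb \to \{0,1\}$ without passing through $\Sym N$.
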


\begin{proof}
By Lemma~\ref{lem.SQ}, $\Ksb = \Sym Z$ is an integral domain.
\end{proof}

\subsection{Consequences for definable families}
\label{sect.fam}

Finally, we deduce our main results about definable families.
As usual, we assume that we are given models $\ZZ \prec Z \prec \cZ$, with $\cZ$ being sufficiently saturated.
Recall that for a bounded definable set $X$, $\# X$ denotes its hyper-cardinality (Definition~\ref{defn.count})
and that $\# X$ is just the usual cardinality of $X$ if finite (in particular if $Z = \ZZ$).

\begin{prop}\label{prop.fam-poly}
Suppose that $(X_{\bar y})_{\bar y \in Y}$ is a $Z$-definable family of bounded sets (where $Y$ is also $Z$-definable).
Then there exists a partition of $Y$ into finitely many $Z$-definable sets $Y_i$ and polynomials $f_i \in \Sym Q[\ytup]$
such that for every $\btup \in Y_i$, $\#X_{\btup} = f_i(\btup)$.
\end{prop}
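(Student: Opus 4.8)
The plan is to reduce to the cuboid case via the family version of the cuboid decomposition, and then to argue exactly as in the proof of Lemma~\ref{lem.count}, but now keeping track of the parameter $\ytup$ uniformly. First I would apply Corollary~\ref{cor.bij-to-cube} to the family $(X_{\ytup})_{\ytup \in Y}$: there is a $Z$-definable family of bijections $X_{\ytup} \to C_{\ytup}$, where $(C_{\ytup})_{\ytup}$ is a finite disjoint union of $Z$-definable families of cuboids, say $C_{\ytup} = \bigsqcup_{m=1}^{M} \prod_{j=1}^{k_m} [0, \ell_{m,j}(\ytup))$ for $Z$-definable functions $\ell_{m,j}\colon Y \to \cNi$. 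Since the hyper-cardinality is preserved under definable bijections, $\#X_{\ytup} = \#C_{\ytup} = \sum_m \prod_j \#[0, \ell_{m,j}(\ytup)) = \sum_m \prod_j \ell_{m,j}(\ytup)$, and it suffices to prove the statement for each family $(C_{\ytup})_{\ytup}$, i.e.\ to realise $\ytup \mapsto \sum_m \prod_j \ell_{m,j}(\ytup)$ as a piecewise polynomial in $\ytup$ with coefficients in $\Sym Q$.

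Next I would pass to piecewise linearity: by Corollary~\ref{lem.lin} applied to each $\ell_{m,j}$, there is a finite $Z$-definable partition $Y = \bigsqcup_i Y_i$ on each piece of which every $\ell_{m,j}$ is linear, i.e.\ of the form $\ytup \mapsto \frac{1}{a}(c + \sum_t b_t y_t)$ with $a \in \NN_{>0}$, $b_t \in \ZZ$, $c \in Z$ — or identically $\infty$. Since each $X_{\ytup}$ is bounded and definable bijections preserve boundedness (piecewise linearity again, cf.\ the argument in Lemma~\ref{lem.count}), no $\ell_{m,j}$ can be $\infty$ on any $Y_i$; shrinking the partition if necessary we may assume this. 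On a fixed $Y_i$, each $\ell_{m,j}(\ytup)$ is then an honest affine-linear function of $\ytup$ with rational coefficients, so when evaluated in $\Sym Q$ (via the embeddings $Q, \QQ \hookrightarrow \Sym Q$, as in the discussion preceding Lemma~\ref{lem.vanish}) it is a polynomial in $\ytup$ of degree $\le 1$ with coefficients in $Q$. Hence $\ytup \mapsto \sum_m \prod_j \ell_{m,j}(\ytup)$ is a sum of products of such linear polynomials, which is a single polynomial $f_i \in \Sym Q[\ytup]$; and $\#X_{\btup} = f_i(\btup)$ for all $\btup \in Y_i$ by construction.

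The only point requiring a little care — and the main (mild) obstacle — is the coefficient ring: the hyper-cardinalities live in $\Sym Q$, not in $Q$, and one must check that substituting the affine function $\ell_{m,j}(\ytup) \in Q[\ytup]$ for the corresponding factor genuinely computes $\#[0,\ell_{m,j}(\btup))$, i.e.\ that $\#[0,b) = b$ for $b \in N$ (which is the easy check already invoked in the proof of Proposition~\ref{prop.groth-b}) and that this is compatible with forming sums and products, which is exactly the content of the semiring homomorphism property of $\#$ recorded in the Lemma preceding Proposition~\ref{prop.groth-b}. With that in hand, everything is a finite $Z$-definable partition together with a formal computation in the polynomial ring $\Sym Q[\ytup]$, so the proof is essentially the uniform version of Lemma~\ref{lem.count}.
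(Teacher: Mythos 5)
Your proposal is correct and follows exactly the route the paper takes: the paper's own proof is a one-line citation of Corollary~\ref{cor.bij-to-cube} (family cuboid decomposition), Corollary~\ref{lem.lin} (piecewise linearity), and the semiring isomorphism $\Ksb \to \Sym N$ of Proposition~\ref{prop.groth-b}, and you have simply filled in the details of that argument, including the correct observation that boundedness of each $X_{\btup}$ rules out $\ell_{m,j} \equiv \infty$ on any piece.
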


\begin{proof}
This follows from Corollary~\ref{cor.bij-to-cube}, using that definable functions are piecewise linear (Corollary~\ref{lem.lin}),
and using the isomorphism of semirings $\Ksb \to \Sym N$ from Proposition~\ref{prop.groth-b}.
\end{proof}

\begin{thm}\label{thm.pres-fam}
Suppose that $Y \subseteq \cZ^k$ is a $Z$-definable set
and that $X_{\ytup}$ and $X'_{\ytup}$ are two $Z$-definable families, where $\ytup$ runs over $Y$.
Suppose moreover that for each $\btup \in Y$, $X_{\btup}$ and $X'_{\btup}$ are bounded.
Then the following are equivalent:
\begin{enumerate}
 \item For every $\btup \in Y \cap Z^k$, there exists a definable bijection $X_{\btup} \to X'_{\btup}$.
 \item For every $\btup \in Y \cap Z^k$, we have $\# X_{\btup} = \# X'_{\btup}$.
 \item For every $\btup \in Y$, there exists a definable bijection $X_{\btup} \to X'_{\btup}$.
 \item For every $\btup \in Y$, we have $\# X_{\btup} = \# X'_{\btup}$.
 \item\label{5} There exists a definable family of bijections $f_{\ytup}\colon X_{\ytup} \to X'_{\ytup}$.
\end{enumerate}
\end{thm}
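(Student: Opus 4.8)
The plan is to prove the chain of implications
\[
(\ref{5}) \Rightarrow (3) \Rightarrow (4) \Rightarrow (2), \qquad (3)\Rightarrow(1)\Rightarrow(2),
\]
all of which are essentially trivial, together with the one substantial implication $(2)\Rightarrow(\ref{5})$; closing the loop this way shows all five conditions are equivalent. The trivial directions go as follows: $(\ref{5})\Rightarrow(3)$ and $(3)\Rightarrow(1)$ are just instantiation of the uniform family at individual parameters; $(3)\Rightarrow(4)$ and $(1)\Rightarrow(2)$ follow because a definable bijection between bounded sets preserves hyper-cardinality (the first property in the lemma after Definition~\ref{defn.count}); and $(4)\Rightarrow(2)$ is immediate since $(4)$ quantifies over a larger set. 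Note that $(2)\Rightarrow(4)$ is itself not obvious — it is a special case of Lemma~\ref{lem.vanish} applied to the difference of the two piecewise-polynomial hyper-cardinality functions — but it is subsumed once we have $(2)\Rightarrow(\ref{5})\Rightarrow(4)$, so I would not prove it separately.

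For the main implication $(2)\Rightarrow(\ref{5})$, the strategy is a compactness argument built on top of Corollary~\ref{cor.bij-to-cube} and Proposition~\ref{prop.fam-poly}, mimicking the proof of Corollary~\ref{cor.bij-to-cube} itself. First I would fix an arbitrary $\btup\in Y$ and set $Z':=\dcl(Z\cup\btup)$, so that $Z'\prec\cZ$ by Lemma~\ref{lem.dcl}. The hypothesis $(2)$, applied after observing that $\#$ takes values in $\Sym Q$ and is preserved under elementary extension (so $\#X_{\btup}=\#X'_{\btup}$ as elements of $\Sym\cQ$ regardless of whether $\btup\in Z^k$), together with the fact that $\#\colon\Ksb[Z']\to\Sym Q'$ is a bijection by Proposition~\ref{prop.groth-b}, yields $[X_{\btup}]=[X'_{\btup}]$ in the Grothendieck semiring of $Z'$-definable sets, i.e.\ there is a $Z'$-definable bijection $X_{\btup}\to X'_{\btup}$. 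Actually the cleanest route is: by Corollary~\ref{cor.bij-to-cube} both families are uniformly in $Z$-definable bijection with finite disjoint unions of families of cuboids $[0,\ell_i(\ytup))$ and $[0,\ell'_j(\ytup))$ with $\ell_i,\ell'_j$ linear on pieces of $Y$ (Corollary~\ref{lem.lin}); then $\#X_{\btup}$ and $\#X'_{\btup}$ are represented by explicit polynomials in the $\ell_i(\btup),\ell'_j(\btup)$, so $(2)$ becomes a polynomial identity which, via Lemma~\ref{lem.vanish}, holds on the relevant $\emptyset$-definable piece of $Y$; this lets one glue the individual cuboid-to-cuboid bijections into a family. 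Either way, one produces a formula $\phi_{\btup}(\xtup,\xtup',\ytup)\in L(Z')$ defining such a bijection for $\ytup$ near $\btup$, and a formula $\psi_{\btup}(\ytup)\in L(Z)$ asserting that $\phi_{\btup}(\xtup,\xtup',\ytup)$ defines a bijection $X_{\ytup}\to X'_{\ytup}$.

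The compactness step finishes it: the partial type $\{\neg\psi_{\btup}(\ytup)\mid \btup\in Y\}$ is not realised in $Y$ (since each $\btup$ satisfies $\psi_{\btup}$), so by saturatedness of $\cZ$ it is inconsistent over $Y$, hence finitely many $\btup_1,\dots,\btup_m$ suffice with $\ytup\in Y\Rightarrow\bigvee_{i=1}^m\psi_{\btup_i}(\ytup)$. Partitioning $Y$ into the corresponding $Z$-definable pieces and using $\phi_{\btup_i}$ on the $i$-th piece gives the desired uniformly definable family $f_{\ytup}\colon X_{\ytup}\to X'_{\ytup}$, i.e.\ $(\ref{5})$.

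The main obstacle is the passage from ``hyper-cardinalities agree'' to ``there is a $Z'$-definable bijection'' for a fixed parameter — this is exactly where Proposition~\ref{prop.groth-b} (the hard half of Theorem~\ref{thm.groth}, i.e.\ injectivity of $\Phi$ on bounded sets) is used — and, more subtly, the bookkeeping needed to make the bijections \emph{uniform} in $\ytup$: one must ensure that after reducing both families to disjoint unions of cuboid-families, the matching of cuboids can itself be organised definably in $\ytup$. The point emphasised in the introduction is that $(2)$ only quantifies over $Z$-rational parameters, so one genuinely needs the hyper-cardinality machinery plus Lemma~\ref{lem.vanish} (to propagate the polynomial identity from $Z^k$ to all of the $\emptyset$-definable parameter set) rather than a direct compactness argument over $\cZ$; this is what distinguishes the proof from the easy implication $(3)\Rightarrow(\ref{5})$, and it is also why the analogous statement fails for unbounded families (Example~\ref{ex.unbd-fam}).
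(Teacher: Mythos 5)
Your proposal is essentially correct and follows the same approach as the paper; the ingredients (Proposition~\ref{prop.fam-poly}, Lemma~\ref{lem.vanish}, Proposition~\ref{prop.groth-b}, and a compactness argument as in Corollary~\ref{cor.bij-to-cube}) coincide, and you correctly identify Lemma~\ref{lem.vanish} as the crux. The organization is slightly different: the paper establishes $(1)\Leftrightarrow(2)$, $(3)\Leftrightarrow(4)$, $(5)\Rightarrow$\,everything, $(3)\Rightarrow(5)$, and then proves $(2)\Rightarrow(4)$ as the single non-trivial step, whereas you bundle the non-trivial content into $(2)\Rightarrow(5)$; these are logically equivalent arrangements of the same argument.

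One point worth flagging. In your first pass at $(2)\Rightarrow(5)$ you write that, since $\#$ ``is preserved under elementary extension,'' one has $\#X_{\btup}=\#X'_{\btup}$ ``regardless of whether $\btup\in Z^k$.'' As stated this is a non sequitur: invariance under elementary extension tells you nothing about parameters $\btup\in Y\setminus Z^k$, and if it did the theorem would be trivial and Example~\ref{ex.unbd-fam} would have no teeth. The passage from $Y\cap Z^k$ to all of $Y$ genuinely requires the polynomial-vanishing argument of Lemma~\ref{lem.vanish}, which you do supply in your ``cleanest route'' paragraph; so the final proof is fine, but the parenthetical in the first attempt should be deleted rather than left as an alternative justification. (A smaller slip: the piece of $Y$ on which the two hyper-cardinality polynomials agree is $Z$-definable, not $\emptyset$-definable, as in Proposition~\ref{prop.fam-poly}; Lemma~\ref{lem.vanish} applies either way.)
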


Let us for the clarity of (\ref{5}) remind the reader that a collection of sets $X_{\ytup}$ for $\ytup\in Y$ is called a definable family if the set
$\{(\ytup,x)\mid \ytup\in Y,\ x\in X_{\ytup} \}$ is a definable set; a family of functions is called a definable family if the collection of the graphs forms a definable family.


The following example shows that the boundedness assumption in the theorem is indeed necessary.

\begin{exa}\label{ex.unbd-fam}
Set $Z = \ZZ$,
$Y = \cN$, $X_{y} = \cN$ and $X'_{y} = \cN \times [0, y)$.
For every $y \in \NN$, there exists a definable bijection $\NN \to \NN  \times [0, y)$, but the sets $X := \bigcup_{y} X_{y} \times \{y\}$ and
$X' := \bigcup_{y} X'_{y} \times \{y\}$ have different dimension, so in particular there is no definable family of bijections $X_{y} \to X'_{y}$.
\end{exa}

This counter-example only works in the case $Z = \ZZ$; here is another one which works in (certain) bigger models.

\begin{exa}
Choose $a \in N$ in such a way that the smallest convex subgroup of $Z$ containing $a$ is $Z$ itself. (We suppose that such an $a$ exists.)
Set $Y = \cN$, $X_{y} = \cN \times [0, a)$ and $X'_{y} = N \times [0, a + y)$.
Then for every $y \in N$, there exists a definable bijection $X_{y} \to X'_{y}$; this can either be verified by specifying the bijection
explicitly (this is left as an exercise) or using Theorem~\ref{thm.groth} and by noticing that $\infty \cdot a = \infty \cdot (a + y)$
(since $\infty \cdot a$ eats $\infty \cdot y$ by Lemma~\ref{lem.eat}).
On the other hand, $\dim_Z X = 2$ whereas $\dim_Z X' = 3$ (where as before, $X := \bigcup_{y} X_{y} \times \{y\}$ and
$X' := \bigcup_{y} X'_{y} \times \{y\}$), so again no family of definable bijection exists.
\end{exa}

\begin{proof}[Proof of Theorem~\ref{thm.pres-fam}]
(1) and (2) are equivalent by Proposition~\ref{prop.groth-b}, and similarly, (3) and (4) are equivalent.
(5) implies all the other statements, and (3) implies (5) by compactness (or, more precisely, saturatedness of $\cZ$)
and using that by Lemma~\ref{lem.dcl} and Remark~\ref{rem.bij}, a definable bijection between $\btup$-definable sets is itself already $\btup$-definable.
Thus it remains to prove (e.g.) that (2) implies (4).

Let $\btup \in Y$ be given; we want to prove that $\# X_{\btup} = \# X'_{\btup}$.
By Proposition~\ref{prop.fam-poly}, there exists a $Z$-definable subset $\hat Y \subseteq Y$
containing $\btup$
and polynomials $f, f' \in \Sym Q[\bar y]$ such that $\#X_{\btup'} = f(\btup')$ and $\#X'_{\btup'} = f'(\btup')$
for every $\btup' \in \hat Y$.
The assumption (2) implies that $f - f'$ vanishes on $\hat Y \cap Z^{k}$.
Using Lemma~\ref{lem.vanish}, we deduce that $f - f'$ vanishes on all of $\hat Y$, so in particular $\# X_{\btup} = f(\btup) = f'(\btup) = \# X'_{\btup}$.
\end{proof}

\bibliographystyle{amsplain}
\bibliography{references}
\end{document}